\documentclass[reqno,11pt]{amsart}

\usepackage[a4paper,left=23mm,right=23mm,top=30mm,bottom=30mm,marginpar=25mm]{geometry}
\usepackage{amsmath}
\usepackage{amssymb}
\usepackage{amsthm}
\usepackage{amscd}
\usepackage{mathtools}
\usepackage{url}
\usepackage[T1]{fontenc}

\usepackage{color}
\usepackage[final]{graphicx}
\usepackage{enumitem}
\usepackage{esint}
\usepackage{dsfont}
\usepackage{tikz}
\usetikzlibrary{arrows.meta}
\usepackage{cite}
\usepackage{bbm}
\usepackage[colorlinks=true, linkcolor=blue,  anchorcolor=blue,citecolor=blue]{hyperref}
\usepackage[english]{babel}
\usepackage{subfig,graphicx}

\usepackage[only,llbracket,rrbracket]{stmaryrd}

\numberwithin{equation}{section}

\newtheoremstyle{thmlemcorr}{10pt}{10pt}{\itshape}{}{\bfseries}{.}{10pt}{{\thmname{#1}\thmnumber{ #2}\thmnote{ (#3)}}}
\newtheoremstyle{thmlemcorr*}{10pt}{10pt}{\itshape}{}{\bfseries}{.}\newline{{\thmname{#1}\thmnumber{ #2}\thmnote{ (#3)}}}
\newtheoremstyle{defi}{10pt}{10pt}{\itshape}{}{\bfseries}{.}{10pt}{{\thmname{#1}\thmnumber{ #2}\thmnote{ (#3)}}}
\newtheoremstyle{remexample}{10pt}{10pt}{}{}{\bfseries}{.}{10pt}{{\thmname{#1}\thmnumber{ #2}\thmnote{ (#3)}}}
\newtheoremstyle{ass}{10pt}{10pt}{}{}{\bfseries}{.}{10pt}{{\thmname{#1}\thmnumber{ A#2}\thmnote{ (#3)}}}

\theoremstyle{thmlemcorr}
\newtheorem{theorem}{Theorem}
\numberwithin{theorem}{section}
\newtheorem{lemma}[theorem]{Lemma}

\newtheorem{proposition}[theorem]{Proposition}

\theoremstyle{thmlemcorr*}
\newtheorem{theorem*}{Theorem}
\newtheorem{lemma*}[theorem]{Lemma}
\newtheorem{corollary*}[theorem]{Corollary}
\newtheorem{proposition*}[theorem]{Proposition}
\newtheorem{problem*}[theorem]{Problem}
\newtheorem{conjecture*}[theorem]{Conjecture}

\theoremstyle{defi}
\newtheorem{definition}[theorem]{Definition}

\theoremstyle{remexample}
\newtheorem{remarkx}[theorem]{Remark}
\newenvironment{remark}
  {\pushQED{\qed}\remarkx}
  {\popQED\endremarkx}
\newtheorem{examplex}[theorem]{Example}
\newenvironment{example}
  {\pushQED{\qed}\examplex}
  {\popQED\endexamplex}

\theoremstyle{ass}

\newcommand{\N}{\mathbb{N}}
\newcommand{\R}{\mathbb{R}}
\newcommand{\C}{\mathbb{C}}

\newcommand{\Z}{\mathbb{Z}}

\renewcommand{\S}{\mathbb{S}}

\newcommand{\Acal}{\mathcal{A}}
\newcommand{\Bcal}{\mathcal{B}}

\newcommand{\Dcal}{\mathcal{D}}
\newcommand{\Ecal}{\mathcal{E}}
\newcommand{\Fcal}{\mathcal{F}}

\newcommand{\Hcal}{\mathcal{H}}
\newcommand{\Ical}{\mathcal{I}}
\newcommand{\Jcal}{\mathcal{J}}

\newcommand{\Pcal}{\mathcal{P}}

\newcommand{\Tcal}{\mathcal{T}}
\newcommand{\Ucal}{\mathcal{U}}

\newcommand{\Tc}{\mathcal{T}}

\renewcommand{\epsilon}{\varepsilon}
\newcommand{\dd}{\,\mathrm{d}}

\DeclarePairedDelimiter{\abs}{\lvert}{\rvert}
\DeclarePairedDelimiter{\norm}{\lVert}{\rVert}

\DeclareMathOperator{\dist}{dist}
\DeclareMathOperator*{\argmin}{arg\,min}
\DeclareMathOperator*{\argmax}{arg\,max}

\DeclareMathOperator{\Div}{div}

\DeclareMathOperator{\Tr}{Tr}
\renewcommand{\O}{\Omega}
\definecolor{dgreen}{rgb}{0,0.5,0}

\renewcommand{\d}{\mathrm{d}}

\def\XXint#1#2#3{{\setbox0=\hbox{$#1{#2#3}{\int}$}
		\vcenter{\hbox{$#2#3$}}\kern-.5\wd0}}

\newcommand{\mres}{\mathbin{\vrule height 1.4ex depth 0pt width
0.13ex\vrule height 0.13ex depth 0pt width 1.0ex}}

\newcommand\restr[2]{{
  \left.\kern-\nulldelimiterspace
  #1 
  \vphantom{\big|} 
  \right|_{#2}
  }}

\title[Shear band formation through discrete plastic increments]{Formation of shear bands through discrete localized plastic increments}

\author{Heiner Olbermann}
\address{Research Institute in Mathematics and Physics, Universit\'{e} catholique de Louvain, Chemin du Cyclotron 2, 1348 Louvain-la-Neuve, Belgium}
\email{heiner.olbermann@uclouvain.be}

\author{Hidde Sch\"{o}nberger}
\address{Department Applied Mathematics, University of Twente, Drienerlolaan 5, 7522 NB Enschede, The Netherlands}
\email{hidde.schonberger@utwente.nl}

\begin{document}
\allowdisplaybreaks
\maketitle
\begin{abstract}
We study rate-independent systems in which the evolution is obtained by incremental minimization of elastic energy and one-homogeneous dissipation under constraints on the admissible inelastic increments. Such constraints arise naturally in several models of inelastic materials, including shear-transformation-zone models, intermittent plasticity, and quantized plasticity, where evolution proceeds through prescribed elementary activation events rather than arbitrary plastic increments. We establish the existence of a continuous-time limit for the corresponding constrained incremental scheme under suitable assumptions.

As an application, we formulate a simplified shear-transformation-zone model consisting of localized activation events on a lattice embedded in a two-dimensional elastic medium. We prove that this model generates shear-band formation through the successive activation of neighboring transformation zones. More precisely, we show that every admissible evolution remains localized on a growing line segment in the infinite-domain setting and that the same mechanism persists on sufficiently large bounded domains. These results provide a rigorous mathematical framework for constrained activation models and show that restrictions on admissible elementary events alone constitute a mechanism for localization in rate-independent systems.    \medskip
    
    \noindent\textsc{MSC (2020):} 74C05 (primary), 35Q74, 74G10
    \medskip

    \noindent\textsc{Keywords:} linearly-elastic perfectly-plastic materials, rate-independent systems, restricted plasticity, shear transformation zones, shear bands

\end{abstract}

\thispagestyle{empty}

\section{Introduction}

Rate-independent systems are frequently modeled through incremental minimization of an en\-ergy-dissipation functional \cite{MiR15,mielke2003energetic}. This framework provides a robust variational description of quasistatic evolution and has found numerous applications in plasticity, fracture, damage, and phase transformations. At the same time, the one-homogeneous dissipation characteristic of rate-indepen\-dent processes introduces a substantial degree of degeneracy into the evolution. As a consequence, a given loading process may admit multiple admissible evolutions, reflecting the absence of an intrinsic time scale and the lack of strict convexity in the inelastic variables. A central question is therefore how physically relevant evolutions are selected among the many mathematically admissible ones.

In the present work, we investigate a class of selection mechanisms based on restrictions of the admissible inelastic increments. More precisely, we consider incremental minimization schemes in which the inelastic increment is not allowed to vary freely but is constrained to belong to a prescribed subset of the unconstrained inelastic strain space. Such restrictions arise naturally in a variety of physical theories in which inelastic evolution is assumed to proceed through elementary events rather than arbitrary deformations. The resulting dynamics may be viewed as a constrained rate-independent evolution whose qualitative properties can differ substantially from those of the corresponding unconstrained model.

Our primary objective is to understand the mathematical consequences of such constraints. In particular, we are interested in whether they can serve as a mechanism for the spontaneous emergence of localized structures. Localization phenomena play a fundamental role in many nonlinear systems and are often attributed to softening, loss of ellipticity, nonconvex constitutive behavior, or other instability mechanisms. The framework considered here suggests a different possibility: localization generated by restrictions on the admissible evolution itself. The question we address is whether lower-dimensional structures may arise solely as a consequence of constrained incremental dynamics.

\medskip

\subsection*{Contributions of the paper} In a first result of the present paper (see Theorem~\ref{thm:existence_rimp}), we establish the existence of a continuous-time evolution associated with constrained incremental minimization. Under the assumption that, apart from the trivial increment $0$, every admissible inelastic increment $q$ has dissipation bounded from below by a constant (see \eqref{eq:Idiscrete}), we consider discrete solutions obtained by successive minimization of elastic energy plus one-homogeneous dissipation. We prove that, as the time step tends to zero, these discrete solutions converge to a limiting rate-independent evolution, which we call a \emph{constrained energetic solution} (see~Definition~\ref{def:ces}). The inelastic part of such a solution is piecewise constant in time. Each of its jumps may be interpreted as the macroscopic trace of a finite cascade of elementary inelastic transitions through successive metastable states.

The constrained evolution naturally leads to the study of the metastable states that can be reached from a given reference configuration through successive admissible minimizing increments. We consider a single model in which each admissible plastic increment is a localized elementary activation supported at one site of a prescribed lattice. By varying the geometry of the domain and the choice of lattice, we obtain a series of results showing that the reachable metastable configurations are organized along one-dimensional structures.

For an infinite body subject to uniform shear, we first show that, starting from vanishing plastic deformation, successive admissible minimizing increments align along a one-dimensional chain of activation sites, thereby producing a shear-band-type structure (Theorem \ref{th:shearband}). In a second whole-space result, we compare the behavior of activation bands on square and triangular lattices. We prove that a horizontal band on the square lattice is completely stable (Proposition \ref{prop:no_fattening_quad_lattice}), whereas on the triangular lattice we characterize all metastable configurations accessible from a pre-existing band and show that propagation can occur only through the addition of aligned rows at its two sides (Theorem \ref{thm:thickening_bands}). In the half-plane, we obtain an analogous characterization in which the localized structure originates at the boundary and propagates into the interior (Theorem \ref{th:shearneumann}). For bounded domains, we prove that the same localization mechanisms persist up to distances proportional to the distance of the initial activation from the boundary: before boundary effects become significant, every reachable metastable configuration remains supported on a one-dimensional chain of activation sites (Theorems \ref{th:partialshearband} and \ref{thm:bounded_neumann}). Taken together, these results establish both the formation and the subsequent propagation of localized plastic structures in several geometries, see Figure~\ref{fig} for a visual overview.

\begin{figure}\centering
\subfloat[Formation of shear band in whole-space (Theorem~\ref{th:shearband})]{\label{a}\resizebox{0.42\textwidth}{!}{
\begin{tikzpicture}[
    lattice/.style={gray!35, thin},
    nodept/.style={circle, fill=black, inner sep=1.3pt},
    zone/.style={circle, draw=gray!45, fill=gray!8, 
                 minimum size=10mm, inner sep=0pt},
    activezone/.style={circle, draw=red!75!black, fill=red!35,
                      fill opacity=0.55, draw opacity=1,
                      minimum size=10mm, inner sep=0pt},
    growth/.style={->, >=stealth, very thick, red!75!black}
]

\def\Nx{7}
\def\Ny{4}

\foreach \x in {0,...,\Nx}{
    \draw[lattice] (\x,0) -- (\x,\Ny);
}
\foreach \y in {0,...,\Ny}{
    \draw[lattice] (0,\y) -- (\Nx,\y);
}

\foreach \x in {0,...,\Nx}{
    \foreach \y in {0,...,\Ny}{
        \node[zone] at (\x,\y) {};
    }
}

\foreach \x in {2,3,4,5}{
    \node[activezone] at (\x,2) {};
}

\foreach \x in {0,...,\Nx}{
    \foreach \y in {0,...,\Ny}{
        \node[nodept] at (\x,\y) {};
    }
}

\draw[growth] (1.9,2) -- (1.1,2);
\draw[growth] (5.1,2) -- (5.9,2);


\end{tikzpicture}}}\hfill
\subfloat[Thickening of shear band in whole-space (Theorem~\ref{thm:thickening_bands})]{\label{b}\resizebox{0.44\textwidth}{!}{\begin{tikzpicture}[
    scale=1.0,
    site/.style={
        circle,
        fill=black,
        inner sep=1.3pt
    },
    zone/.style={
        circle,
        draw=gray!45, fill=gray!8,
        thin,
        minimum size=1cm
    },
    active/.style={
        circle,
        draw=red!75!black, fill=red!35,
        fill opacity=0.55, draw opacity=1,
        minimum size=10mm, inner sep=0pt
    },
	growth/.style={->, >=stealth, very thick, red!75!black}
]

\def\Nx{7}
\def\Ny{4}

\foreach \j in {0,...,\Ny} {
    \foreach \i in {0,...,\Nx} {

        \pgfmathsetmacro{\x}{\i + 0.5*mod(\j,2)}
        \pgfmathsetmacro{\y}{0.8660254*\j}

        \node[zone] at (\x,\y) {};

        \node[site] at (\x,\y) {};
    }
}

\foreach \j in {1,2} {
    \foreach \i in {0,...,\Nx} {
        \pgfmathsetmacro{\x}{\i + 0.5*mod(\j,2)}
        \pgfmathsetmacro{\y}{0.8660254*\j}

        \node[active] at (\x,\y) {};
        \node[site]   at (\x,\y) {};
    }
}

\def\jpartial{3}

\foreach \i in {2,3,4,5} {
    \pgfmathsetmacro{\x}{\i + 0.5*mod(\jpartial,2)}
    \pgfmathsetmacro{\y}{0.8660254*\jpartial}

    \node[active] at (\x,\y) {};
    \node[site]   at (\x,\y) {};
}

\pgfmathsetmacro{\yp}{0.8660254*\jpartial}

\draw[growth]
    (2.4,\yp) -- (1.6,\yp);

\draw[growth]
    (5.6,\yp) -- (6.4,\yp);

\end{tikzpicture}}}\par 
\subfloat[Formation of shear band in half-space (Theorem~\ref{th:shearneumann})]{\label{c}\resizebox{0.42\textwidth}{!}{\begin{tikzpicture}[
    lattice/.style={gray!35, thin},
    boundary/.style={black, very thick},
    nodept/.style={circle, fill=black, inner sep=1.3pt},
    zone/.style={circle, draw=gray!45, fill=gray!8,
                 minimum size=10mm, inner sep=0pt},
    activezone/.style={circle, draw=red!75!black, fill=red!35,
                      fill opacity=0.55, draw opacity=1,
                      minimum size=10mm, inner sep=0pt},
    growth/.style={->, >=stealth, very thick, red!75!black}
]

\def\Nx{6}
\def\Ny{4}


\foreach \x in {0,...,\Nx}{
    \draw[lattice] (\x,0) -- (\x,\Ny);
}

\foreach \y in {0,...,\Ny}{
    \draw[lattice] (0,\y) -- (\Nx,\y);
}

\draw[boundary] (0,-0.5) -- (0,\Ny+0.5);


\foreach \x in {0,...,\Nx}{
    \foreach \y in {0,...,\Ny}{
        \node[zone] at (\x,\y) {};
    }
}


\foreach \x in {0,1,2,3}{
    \node[activezone] at (\x,2) {};
}


\foreach \x in {0,...,\Nx}{
    \foreach \y in {0,...,\Ny}{
        \node[nodept] at (\x,\y) {};
    }
}


\draw[growth] (3.1,2) -- (3.9,2);

\draw[boundary] (-0.5,-0.5) -- (-0.5,\Ny+0.5);

\node[left=3pt] at (-0.5,\Ny/2)
    {$\partial \Omega$};

\end{tikzpicture}}}
\caption{Schematic representation of the activation of localized plastic zones in anti-plane shear with various geometries subject to an affine boundary condition $g(x)=\gamma x_2$ at infinity and Neumann boundary conditions at $\partial \Omega$ for the half-space.}
\label{fig}
\end{figure}

\medskip

\subsection*{Relation to literature} The model and all of the above localization results admit an interpretation within shear transformation zone (STZ) theory. Localized shear transformations have long been regarded as elementary carriers of plastic deformation in amorphous solids, beginning with the pioneering work of Spaepen \cite{spaepen1977microscopic} and Argon \cite{argon1979plastic}, and with the later mesoscopic descriptions of Bulatov and Argon \cite{Bul94a,Bul94b,Bul94c}. Building on these ideas, Falk and Langer \cite{falk1998dynamics,falk2011deformation} developed STZ theory as a continuum constitutive framework in which plastic flow is generated by populations of localized rearrangement events. Among the subsequent mesoscopic formulations, the model of Homer and Schuh \cite{HoS09} represents plastic evolution through the activation of individual STZ sites embedded in an elastic medium. After passing to a zero-temperature limit, replacing finite-element elasticity by continuous elasticity, distributing the activation sites on a lattice, and restricting the elastic response to anti-plane shear, this model falls within the framework considered here. The localization theorems therefore provide a rigorous mechanism for the formation and propagation of shear-band-type structures in this simplified STZ setting. At the same time, the dichotomy between the results on the square and triangular lattice show that this model has a strong dependency on the location of the zones, which may translate to mesh dependence in the finite element simulations. Moreover, the indefinite thickening of the shear band for the case of the triangular lattice (Theorem~\ref{thm:thickening_bands}) indicates that this model may not give rise to an intrinsic length-scale for the thickness of the bands, which warrants further investigation into this aspect of the physical models.

More generally, the framework considered here may be viewed as an abstraction of a recurring modeling principle in the theory of inelastic materials. In crystalline plasticity, plastic flow is restricted by crystallographic slip systems and compatibility conditions. In STZ models for amorphous solids, plastic deformation is represented by localized rearrangement events. In mesoscopic tensorial models and theories of quantized plasticity \cite{salman2011minimal,salman2012critical,zhang2020variety,perchikov2024quantized}, evolution proceeds through elementary transformations between discrete states. Despite their different physical origins, these theories share the common feature that admissible inelastic increments are restricted to a distinguished subset of the full inelastic strain space. These models are based on the physical hypothesis that inelastic deformation proceeds through elementary localized events rather than arbitrary infinitesimal increments of the inelastic strain. The present work isolates this common mathematical structure and investigates its consequences for the resulting rate-independent dynamics. Our main result shows that restricting admissible elementary events constitutes an independent mechanism for selecting rate-independent evolutions and generating localization.

A related perspective arises in the theory of balanced-viscosity solutions for rate-independent systems \cite{MiR15}. Both frameworks may be viewed as mechanisms for selecting among multiple admissible evolutions. In balanced-viscosity theory, the selection originates from a vanishing-viscosity limit, whereas in the present setting it is imposed directly through constraints on admissible increments. Although we do not pursue this connection in a systematic manner, it provides useful intuition for the role of constraints as a selection principle in degenerate rate-independent evolution.

\medskip

\subsection*{Outline} The paper is organized as follows. In Section \ref{sec:setting} we introduce the general framework of constrained incremental minimization and establish the existence of constrained energetic solutions. We then turn to the analysis of the simplified STZ model in dimension 2 (anti-plane shear), and the emergence of one-dimensional structures therein: Section~\ref{sec:localized_plastic} details the set-up, and Section \ref{sec:formation_thickening} is devoted to the proof of shear-band formation in that model defined on the whole space, and the question of whether these bands thicken. In Section \ref{sec:half-space} we consider the model defined on a half-space, and  show the emergence of shear bands starting at the boundary. In Section \ref{sec:bounded_domains}, we consider shear band formation in bounded domains, adapting the theorems from the preceding sections to that setting. In the appendix \ref{sec:auxiliary_series}, we state and prove several useful but non-standard identities for infinite sums.

\subsection*{Notation}
The positive integers are denoted by $\N=\{1,2,\dots\}$, and the non-negative ones by $\N_0=\{0,1,\dots\}$. The Frobenius inner product between matrices $M,N \in \R^{n \times n}$ is written as $M:N = \Tr(M^TN)$ and the tensor product of two (column) vectors $a,b \in \R^n$ is denoted by $a \otimes b = ab^T \in \R^{n \times n}$. The ball of radius $r>0$ centered at $x \in \R^n$ is written as $B_r(x)$, and the indicator function of a set $A \subset \R^n$ is
\[
\mathbbm{1}_A(x)=\begin{cases}
1 &\text{if $x \in A$},\\
0 &\text{if $x \in \R^n \setminus A$}.
\end{cases}
\]
By $\Hcal^d$ we denote the $d$-dimensional Hausdorff measure. Throughout, we use $C>0$ to denote a generic constant that may change from line to line.

\section{Quasistatic elasto-plasticity with discrete increments}

\label{sec:setting}

In this paper we are interested in the quasistatic evolution of linearly-elastic perfectly-plastic materials with specific restrictions on the plastic increments. We first recall the classical linear elasto-plasticity in the modern framework of rate-independent systems \cite{MiR15}. To this aim, let $\Omega \subset \R^n$ with $n\in\{2,3\}$ be a bounded Lipschitz domain, $u \in \Ucal:=H^1(\Omega;\R^n)$ denote the displacement, and $p \in \Pcal:=L^2(\Omega;\R^{n \times n}_{\rm dev})$ be the plasticity, where $\R^{n \times n}_{\rm dev}$ denotes the symmetric trace-free matrices. The loading and boundary conditions are respectively given by 
\begin{equation*}\label{eq:loadingbc}
f \in W^{1,1}([0,T];L^2(\Omega;\R^n)) \quad \text{and} \quad g \in W^{1,1}([0,T];H^1(\Omega;\R^n))
\end{equation*}
and the Dirichlet conditions are enforced on $\Gamma$, which is a relatively open subset of the boundary $\partial \Omega$ with $\Hcal^{n-1}(\Gamma)>0$. The driving forces behind the quasistatic evolution are the elastic energy $\Ecal:[0,T] \times \Ucal \times \Pcal \to \R \cup \{\infty\}$ given by
  \begin{equation}\label{eq:1}
\Ecal(t,u,p)=\begin{cases}
\displaystyle \int_{\Omega} \frac{1}{2}\C(e(u)-p):(e(u)-p)) -f(t)\cdot u \dd x &\text{if $u=g(t)$ on $\Gamma$},\\
\infty &\text{otherwise},
\end{cases}
\end{equation}
where $e(u)=(\nabla u + \nabla u^T)/2$ is the symmetric gradient and $\C$ the elasticity tensor, and the dissipation potential $\Dcal:\Pcal \to [0,\infty)$ given by
  \begin{equation}\label{eq:2}
\Dcal(q) = \int_{\Omega} |q|_{K} \dd x,
\end{equation}
where $K \subset \R^{n\times n}_{\rm dev}$ is a closed convex even set whose boundary constitutes the yield surface, and
\[
|M|_{K}:= \sup \{M:N \,|\, N \in K\}.
\]
Finally, we also need an initial condition $(u_0,p_0) \in \Ucal \times \Pcal$ such that
\[
\Ecal(0,u_0,p_0) \leq \Ecal(0,\tilde{u},\tilde{p})+\Dcal(\tilde{p}-p_0) \quad \text{for all $(\tilde{u},\tilde{p}) \in \Ucal \times \Pcal$.}
\]
The quasistatic evolution is now characterized as functions $(u,p):[0,T] \to \Ucal \times \Pcal$ such that $(u(0),p(0))=(u_0,p_0)$ and for all $t \in [0,T]$:
\begin{equation}\tag{QE1}
\begin{split}\label{eq:qe}
\Ecal(t,u(t),p(t)) &\leq \Ecal(t,\tilde{u},\tilde{p})+\Dcal(\tilde{p}-p(t)) \quad \text{for all $(\tilde{u},\tilde{p}) \in \Ucal \times \Pcal$,} \\
\Ecal(t,u(t),p(t))&+\mathrm{Diss}_{\Dcal}(p,[0,t]) = \Ecal(0,u(0),p(0))+\int_0^t \partial_s \Ecal(s,u(s),p(s)) \dd s,
\end{split}
\end{equation}
where the dissipation is defined as 
\[
\mathrm{Diss}_{\Dcal}(p,[r,s]):= \sup \left\{ \sum_{j=1}^{N} \Dcal(p(t_j)-p(t_{j-1})) \,\Big|\, r \leq t_0 \leq \ldots \leq t_N \leq s \right\};
\]
Technically speaking, the time-derivative of the energy $\partial_t \Ecal(t,u,p)$ should be interpreted as the derivative at $s=t$ of
\[
s \mapsto \Ecal(s,u-g(t)+g(s),p),
\]
but this plays no role in the restricted formulation below \eqref{eq:qe2}. We mention that the quasistatic evolution phrased in this way may not admit solutions, as the class of displacements has to be relaxed to the space of functions of bounded deformation \cite{DDM06}, but this is not of concern to us here.

Note that for each $p \in \Pcal$, there exists a unique minimizer $u(t,p) \in \Ucal$ such that
\[
\Ecal(t,u(t,p),p) = \min_{u \in \Ucal} \Ecal(t,u,p)<\infty.
\]
Using this, we may reduce the dependence to only the plasticity, by considering the functional $\Jcal:[0,T] \times \Pcal \to \R$ given by
\[
\Jcal(t,p):=\Ecal(t,u(t,p),p).
\]
The quasistatic evolution \eqref{eq:qe} is then equivalent to \emph{global stability} and \emph{energy balance} for all $t \in [0,T]$:
\begin{equation}\tag{QE2}
\begin{split}\label{eq:qe2}
p(t) &\in \argmin_{\Pcal} \Jcal(t,\cdot)+\Dcal(\cdot-p(t)) \\
\Jcal(t,p(t))&+\mathrm{Diss}_{\Dcal}(p,[0,t]) = \Jcal(0,p(0))+\int_0^t \partial_s \Jcal(s,p(s)) \dd s.
\end{split}
\end{equation}

\subsection{Time-incremental minimization problems}
\label{sec:time-incremental}

One of the simplest ways to approximate quasistatic evolutions is to use time-incremental minimization problems. Precisely, we split $(0,T]$ into intervals $(t_{k-1},t_{k}]$ for $k=1,\ldots,N$ with $t_k=kT/N$. Then, for a fixed initial $p_0 \in \Pcal$, we look for $p_k \in  \Pcal$ such that for all $k=1,\ldots,N$
\begin{equation}\tag{IMP}\label{eq:imp}
p_k \in \argmin_{\Pcal} \Jcal(t_k,\cdot)+\Dcal(\cdot-p_{k-1}).
\end{equation}
We then define the left-continuous interpolant $p^{N}:[0,T] \to \Pcal$ as $p^N(0):=p_0$ and
\[
p^{N}(t):=
p_k \quad \text{for $t \in (t_{k-1},t_{k}]$ and $k=1,\ldots,N$}\,,
\]
which, under suitable assumptions, converges as $N \to \infty$ to a solution of the quasistatic evolution \eqref{eq:qe}.

We are now interested in incorporating a restriction on which plastic increments are allowed. We thus introduce a set $\Ical \subset \Pcal$ containing $0$, which represents the admissible increments, and define the new dissipation potential $\Dcal_\Ical:\Pcal \to [0,\infty]$ by
\[
\Dcal_\Ical (q):=\begin{cases}
\Dcal(q)  &\text{if $q \in \Ical$,}\\
\infty &\text{else}.
\end{cases}
\]
For simplicity, we assume that $\Ical$ is weakly closed in $\Pcal$. However, in case the set $\Ical$ is not weakly closed in $\Pcal$, we could still do the same by replacing $\Dcal_{\Ical}$ by its relaxation
\[
\Dcal^{\rm rel}_{\Ical}(q):=\inf \left\{ \liminf_{j \to \infty} \Dcal(q_j) \,\Big|\, (q_j)_j \subset \Ical, \ q_j \rightharpoonup q \ \text{in $\Pcal$}\right\},
\]
to recover lower semicontinuity. In the same way as before, we now define an incremental minimization problem, as a sequence $(p_k)_k \subset \Pcal$ such that for $k=1,\ldots,N$
\begin{equation}\tag{RIMP}\label{eq:rimp}
p_k \in \argmin_{\Pcal} \Jcal(t_k,\cdot)+\Dcal_{\Ical}(\cdot-p_{k-1}).
\end{equation}

Before we analyze \eqref{eq:rimp} mathematically, let us give some explicit examples that have been considered in the literature.
\begin{example}[Shear transformation zones]\label{ex:stz}
Shear transformation zones correspond to plastic shears that are activated in localized zones \cite{Bul94a,Bul94b,Bul94c,KAN18,HoS09}. Since this is used to model amorphous plasticity, there are no preferred shear directions, and the plastic events are interpreted as a collection of unordered atoms sliding along a plane within a localized zone. In the case of spherical zones, the increments correspond to
\[
\Ical=\{0\} \cup \left\{\gamma_0 \mathbbm{1}_{B_{\delta}(y)} \mu \odot \nu \, |\, \mu,\nu \in \S^2, \  \mu \cdot \nu=0, \ y \in C\right\},
\]
where $\gamma_0>0$ is the characteristic shear strain, $\delta>0$ is the radius of the zones, and $C \subset \Omega$ the set of centers of the zones; moreover, the matrix
\[
\mu \odot \nu :=\frac{1}{2}\left(\mu \otimes \nu+\nu \otimes \mu\right)\in \R^{3 \times 3}_{\rm dev},
\]
denotes a shear in the direction $\mu$ along the plane with normal $\nu$.
\end{example}

\begin{example}[Time-discontinuous plasticity]\label{ex:quantized}
In time-discontinuous plasticity, the plastic increments are constrained to have a minimal size $\Delta p_{\rm min}>0$, precisely
\[
\Ical=\{0\} \cup \left\{ q \in \Pcal \,|\, \|q\|_{L^1(\Omega)} \geq \Delta p_{\rm min}\right\}.
\]
The quantization of the increments leads to time-discontinuous evolution with bursts of plasticity at discrete time instances \cite{LKS25}.
\end{example}

\begin{example}[Regularized dislocations]\label{ex:dislocations}
If we consider a subset $\Sigma$ of a slip plane with normal $\nu \in \S^2$, then the plastic strain associated to a slip dislocation on this surface $\Sigma$ can be represented as a measure
\[
q = b \odot \nu \Hcal^{2}\mres_{\Sigma},
\]
with $b \in \R^3$ the Burgers vector. However, it is well-known that the associated displacement fields may form singularities leading to infinite energies in the core radius. Hence, a common strategy is to regularize the dislocation, for example, by convolution
\[
q = \rho_\epsilon * (b \odot \nu \Hcal^{2}\mres_{\Sigma}),
\]
where $(\rho_\epsilon)_\epsilon$ is a standard family of mollifiers. This was considered in \cite{CGO15}, where an asymptotic analysis of the dislocation density is carried out. Inspired by this, we could consider a discrete collection of dislocation slip systems $(b_i,\nu_i,\Sigma_i)$, e.g., those given by the crystalline lattice, and utilize the set of plastic increments
\[
\Ical = \{0\} \cup \bigcup_{i} \left\{\rho_\epsilon * (b_i \odot \nu_i \Hcal^{2}\mres_{\Sigma_i})\right\}.
\]
This is also very closely related to the discrete dislocation approach in \cite{RAO07}, where the optimization is carried out by carrying out the optimal single dislocations one by one.
\end{example}

Since $\Dcal_{\Ical}$ might not satisfy the usual triangle inequality,
\[
\Dcal_{\Ical}(q_1+q_2) \leq \Dcal_{\Ical}(q_1)+\Dcal_{\Ical}(q_2),
\]
it may happen that
\[
p_k \not \in \argmin_{\Pcal} \Jcal(t_k,\cdot)+\Dcal_{\Ical}(\cdot - p_k),
\]
that is, the state $p_k$ does not satisfy the energy stability. Hence, as the time step vanishes, it may occur that multiple transitions happen at a single time $t$ in the limit. To describe this, we assume that there is a $\gamma_0>0$ such that
\begin{equation}
\begin{split}\label{eq:Idiscrete}
 \Dcal(q)\geq \gamma_0 \quad \text{for all $q \in \Ical \setminus \{0\}$} \quad \text{and} \quad \text{$\Ical$ is compact in $\Pcal$.}
\end{split}
\end{equation}
The former condition states that all increments have at least a given size, which corresponds well with the examples mentioned before; the second condition is more technical, and ensures that we do not have to relax the displacements to $BD(\Omega;\R^n)$ (see~\cite{DDM06}) and it is needed for the continuity of the minimizers of the energy-dissipation functional in time. We observe that Example~\ref{ex:stz} satisfies \eqref{eq:Idiscrete} as long as the set of centers $C \subset \Omega$ is compact, and also Example~\ref{ex:dislocations} satisfies \eqref{eq:Idiscrete}. For Example~\ref{ex:quantized}, we can intersect that set of increments with a compact set, e.g., the space of piecewise constant functions on a triangulation of $\Omega$, which is common in finite element simulations \cite{LKS25}. 

We now define the sets $\Acal_k(t,p), \Bcal_k(t,p)$ for $k \in \N_0$ as
\[
\Bcal_0(t,p):=\{p\} \quad \text{and} \quad \Bcal_{k}(t,p):=\bigcup_{\tilde{p} \in \Bcal_{k-1}(t,p)} \argmin_{\Pcal} \Jcal(t,\cdot)+\Dcal_{\Ical}(\cdot - \tilde{p}) \setminus\{\tilde{p}\}.
\]
and
\[
\Acal_0(t,p):=\{p\} \quad \text{and} \quad \Acal_k(t,p):=\Bcal_k(t,p) \cup \bigcup_{\tilde{p} \in \Acal_{k-1}(t,p)} \argmin_{\Pcal} \Jcal(t,\cdot)+\Dcal_{\Ical}(\cdot - \tilde{p}) \cap \{\tilde{p}\}.
\]
The set $\Bcal_k(t,p)$ describes the states that can be reached from $p$ with exactly $k$ increments in $\Ical$, while $\Acal_k(t,p)$ represents the stable states that can be reached in at most $k-1$ steps, together with the (possibly non-stable) states that can be reached in $k$ steps. Note that due to the first part of \eqref{eq:Idiscrete} and the non-negativity of the elastic energy $\Jcal$, there must be some $k_0 \in \N_0$ such that $\Bcal_{k}(t,p)=\emptyset$ for all $k \geq k_0$, and we set 
\[
\Acal(t,p):=\Acal_{k_0}(t,p)=\lim_{k \to \infty} \Acal_k(t,p),
\]
as the collection of all stable states that can be reached from $p$ with increments in $\Ical$.

\medskip

In the upcoming definition, we will use the following notation for a left-continuous piecewise constant function $p:[0,T] \to \Pcal$: For $t\in [0,T)$, 
$p(t^{+}):=\lim_{s \downarrow t} p(s)$ and $J(p)$ denotes the (discrete) jump set of $p$
\[
J(p)=\left\{t \in [0,T)\,|\, p(t) \not = p(t^{+})\right\}.
\]
For $t\in[0,T)$ and $p_-,p_+\in\Pcal$, define
\[
\Tc(t,p_-,p_+):=\inf\Biggl\{
\sum_{i=1}^{m}\Dcal_{\Ical}(p_i-p_{i-1})\,\bigg|\,
\begin{array}{l}
m\in\N,\quad p_0=p_-,\quad p_m=p_+,\\[0.2em]
p_i\in\argmin_{\Pcal} \Jcal(t,\cdot)+\Dcal_{\Ical}(\cdot-p_{i-1}),\\
p_i\neq p_{i-1},\quad i=1,\dots,m
\end{array}\Biggr\}\,,
\]
with the convention that $\Tc(t,p,p)=0$, and $\inf\emptyset=\infty$. Note that $\Tcal(t,p_-,p_+) < \infty$ if and only if $p_+ \in \Bcal_k(t,p_-)$ for some $k \in \N_0$.

We define constrained energetic solution staying close to the concept of energetic solutions of rate-independent systems that we have recalled above.

\begin{definition}[Constrained energetic solution]\label{def:ces}
A left-continuous piecewise constant function $p:[0,T] \to \Pcal$ is said to be a \emph{constrained energetic solution} for the system $(\mathcal P,\mathcal E,\mathcal D,\mathcal I)$ if the following hold for all $t \in [0,T]$:
\begin{align}
p(t) \in \argmin_{\Pcal} \Jcal(t,\cdot)&+\Dcal_{\Ical}(\cdot-p(t));
\tag{S}\label{eq:S}\\
\tag{EI}\label{eq:EI}
 \Jcal(t,p(t))+\sum_{s\in J(p)\cap[0,t]}
 \Tc(s,p(s),p(s^{+}))
&\leq \Jcal(0,p(0))+\int_0^t\partial_s\Jcal(s,p(s))\,\mathrm ds.
\end{align}
\end{definition}

\begin{remark}\label{rem:ces}
a) Equations \eqref{eq:S} and \eqref{eq:EI} are the constrained analogues of the global stability
 and energy balance conditions of \eqref{eq:qe2}, respectively. In contrast to the classical energetic
formulation, the energy inequality \eqref{eq:EI} generally cannot be expected
to be an equality. There are two reasons why the energy inequality need not be an equality, which are best understood by comparing the incremental minimization problem with its continuous-time limit.

First, at a jump time $t$, the limiting evolution records only the initial
and final states $p(t)$ and $p(t^{+})$, whereas the underlying constrained
incremental evolution may pass through a finite sequence of intermediate
states
\[
p(t)=p_0,\ p_1,\ \ldots,\ p_N=p(t^{+}),
\]
where each transition is obtained by one constrained incremental
minimization. Each elementary transition satisfies
\[
\Jcal (t,p_i)+\mathcal D(p_i-p_{i-1})
\le
\Jcal(t,p_{i-1}),
\]
and the inequality may be strict. Consequently, the cumulative energy drop
along the cascade may exceed the cumulative dissipation,
\[
\sum_{i=1}^N
\Bigl(
\Jcal(t,p_{i-1})
-
\Jcal (t,p_i)
-
\mathcal D(p_i-p_{i-1})
\Bigr)
>0\,.
\]

Second, the continuous-time model does not retain the intermediate states. Instead,
it records only the macroscopic jump between its end states and assigns to it
the effective transition cost
\[
\Tc(t,p(t),p(t^{+})),
\]
defined as the minimal dissipation among all admissible minimizing cascades
connecting these states. The energy inequality therefore involves this
effective transition cost rather than the dissipation of any particular
microscopic cascade.

From a physical point of view, the intermediate states represent a rapid
cascade of elementary inelastic events occurring on a time scale that is not
resolved by the quasistatic model. The limiting evolution therefore captures
only the slow evolution between metastable configurations, while the rapid
relaxation during a jump is compressed into an instantaneous transition. The
possible strict inequality in the energy balance may  be interpreted as
reflecting the energetic effect of these unresolved cascades. \medskip

b) We can equivalently replace \eqref{eq:EI} by the condition
\begin{align}\label{eq:oldei}
p(t^{+}) &\in \Acal(t,p(t))
\quad \text{for all $t \in J(p)$.}
\end{align}
Indeed, if \eqref{eq:EI} holds, then we find immediately that 
\[
p(t^{+}) \in \bigcup_{k \in \N} \Bcal_k(t,p(t)) \quad \text{for all $t \in J(p)$},
\]
since otherwise $\Tc(t,p(t),p(t^{+})) = \infty$. Combining this with \eqref{eq:S}, we deduce that \eqref{eq:oldei} holds. For the reverse implication, let $0\leq t_0 < t_1 < \ldots < t_N < T$ be such that $J(p) = \{t_0,\ldots,t_N\}$. Then, for some $t \in (t_i,t_{i+1}]$ we find using the left-continuity of $p$ that
\begin{align}\label{eq:rem}
\begin{split}
\Jcal(t,p(t))=\Jcal(t,p(t_i^{+})) &= \int_{t_i}^{t} \partial_s \Jcal(s,p(t_i^{+}))\dd s + \Jcal(t_i,p(t_i^{+})) \\
&= \int_{t_i}^{t} \partial_s \Jcal(s,p(s))\dd s + \Jcal(t_i,p(t_i^{+})),
\end{split}
\end{align}
with the second equality being the fundamental theorem of calculus. Using \eqref{eq:oldei}, we infer that there are
\[
p(t_i) = p_0 , \, p_1, \, \ldots ,\, p_m= p(t_i^{+})
\]
with $p_{j} \in \argmin_{\Pcal} \Jcal(t_i,\cdot) +\Dcal_{\Ical}(\cdot - p_{j-1})$ for $j=1,\ldots,m$. Hence, the definition of $\Tc$ yields
\begin{align*}
\Tc(t_i,p(t_i),p(t_i^{+})) &\leq \sum_{j=1}^{m} \Dcal_{\Ical}(p_j-p_{j-1}) 
\leq \sum_{j=1}^{m} \Jcal(t_i,p_{j-1})-\Jcal(t_i,p_j)\\
& = \Jcal(t_i,p(t_i))-\Jcal(t_i,p(t_i^{+})).
\end{align*}
Adding this to \eqref{eq:rem} results in
\[
\Jcal(t,p(t)) + \Tc(t_i,p(t_i),p(t_i^{+})) \leq \Jcal(t_i,p(t_i)) +\int_{t_i}^{t} \partial_s \Jcal(s,p(s))\dd s.
\]
Continuing like this, we arrive at \eqref{eq:EI} as desired.
\end{remark}

 We now prove that the solution of \eqref{eq:rimp} indeed converges to a constrained energetic solution of $(\mathcal P,\mathcal E,\mathcal D,\mathcal I)$. 
\begin{theorem}
  \label{thm:existence_rimp} Let $\Omega\subset\R^n$ be a bounded Lipschitz domain, $\mathcal P=L^2(\Omega;\R^{n\times n}_\mathrm{dev})$, $\mathcal E$ as in \eqref{eq:1} with loading $f \equiv 0$, $\mathcal I\subset\mathcal P$ a set containing $\{0\}$,  $\mathcal D$ as in \eqref{eq:2}, and 
assume that \eqref{eq:Idiscrete} is satisfied. Then, for any initial plasticity $p_0 \in \Pcal$ satisfying
 \begin{equation}\label{eq:4}
p_0\in \argmin_{\Pcal}  \Jcal (0,\cdot)+\mathcal D_{\mathcal I}(\cdot-p_0)\,,
\end{equation}
there exist solutions to \eqref{eq:rimp}. Moreover, the left-continuous interpolants $p^{N}$ of any solutions to \eqref{eq:rimp} converge, up to a non-relabeled subsequence, weak* in $BV([0,T];\Pcal)$ to a constrained energetic solution $p$ of $(\mathcal P,\mathcal E,\mathcal D,\mathcal I)$.
\end{theorem}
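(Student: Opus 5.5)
Existence of discrete solutions comes from the direct method. For fixed $k$, the functional $q\mapsto \Jcal(t_k,p_{k-1}+q)+\Dcal_{\Ical}(q)$ only needs to be minimized over $q\in\Ical$. $\Ical$ is compact in $\Pcal$, and $\Jcal(t,\cdot)$ is continuous on $\Pcal$: it is a quadratic form in $p$, obtained by minimizing out $u$ with $u=g(t)$ on $\Gamma$, and coercive by Korn. $\Dcal$ is continuous on $L^2$ as well, since it is bounded by $C\|q\|_{L^1}$. A minimizer therefore exists.

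For the a priori bounds, compare with the trivial increment $q=0$. This gives $\Jcal(t_k,p_k)+\Dcal(p_k-p_{k-1})\le \Jcal(t_k,p_{k-1})$. Since $f\equiv 0$, we have $\partial_t\Jcal(t,p)=\int \C(e(u(t,p))-p):e(\dot g(t))\,\d x$, so $|\partial_t\Jcal(t,p)|\le C\|\dot g(t)\|_{H^1}(1+\Jcal(t,p))^{1/2}$ up to constants. Gronwall then yields $\sup_k\Jcal(t_k,p_k)\le C$ and $\sum_k\Dcal(p_k-p_{k-1})\le C$. By \eqref{eq:Idiscrete}, each nonzero increment costs at least $\gamma_0$, so the number of jumps of $p^N$ is bounded by $M:=C/\gamma_0$, uniformly in $N$. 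Moreover every increment lies in the compact set $\Ical$.

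For compactness, each $p^N$ is piecewise constant with at most $M$ jumps. The jump times lie in $[0,T]$ and the jump increments in $\Ical$. By a subsequence we may assume that the number of jumps is constant, that the jump times converge (possibly coalescing) to finitely many limit times $s_1<\dots<s_L$, and that the increments converge strongly in $\Pcal$ to limits $q_j\in\Ical$. The limit $p$ is then left-continuous and piecewise constant, and $p^N\to p$ pointwise strongly away from the $s_i$, which gives weak* convergence in $BV([0,T];\Pcal)$. Some care is needed with left continuity: a jump occurring at $t_k$ in the discrete scheme belongs to the interval $(t_{k-1},t_k]$, so in the limit the pre-jump state is $p(s_i)$. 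Each limit increment either vanishes or satisfies $\Dcal\ge\gamma_0$.

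The main step is to verify \eqref{eq:S} and \eqref{eq:oldei}, which is equivalent to \eqref{eq:EI} by Remark~\ref{rem:ces}. I plan to do this through stability passing to the limit. At each discrete step, $p_k$ minimizes $\Jcal(t_k,\cdot)+\Dcal_\Ical(\cdot-p_{k-1})$. Using continuity of $\Jcal$ jointly in $(t,p)$ (from $g\in W^{1,1}\subset C$) and of $\Dcal$ on $\Pcal$, the inequality $\Jcal(t_k,p_k)+\Dcal(p_k-p_{k-1})\le \Jcal(t_k,p_{k-1}+q)+\Dcal(q)$ for all $q\in\Ical$ passes to limits along convergent subsequences. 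For a time $t$ that is not a limit jump time, the discrete steps near $t$ are trivial for large $N$. Hence $p^N(t)$ is a minimizer with zero increment, and in the limit $p(t)$ satisfies \eqref{eq:S}. The same argument applies to the final state of a cluster, via its trivial successor step, or, if it is at $T$, via a limiting argument from the right. At a limit jump time $s_i$, the finitely many discrete jumps that coalesce into $s_i$ produce, in the limit, a chain $p(s_i)=P_0,P_1,\dots,P_m=p(s_i^+)$. Each $P_j$ is a limit of consecutive discrete states, and the steps between them are trivial, so the constant states satisfy a limit stability. Passing to the limit in the minimality inequalities shows $P_j\in\argmin \Jcal(s_i,\cdot)+\Dcal_\Ical(\cdot-P_{j-1})$. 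Removing the repeated entries $P_j=P_{j-1}$ leaves a cascade, so $p(s_i^+)\in\Bcal_m(s_i,p(s_i))$ for some $m$. Combined with stability of $p(s_i^+)$, this gives \eqref{eq:oldei}, and stability of $p(0)=p_0$ is exactly \eqref{eq:4}.

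The delicate point I expect is handling intermediate discrete states between clustered jumps, which are separated by trivial steps that may carry time drift. Because the jump times converge to the same $s_i$ and $\Jcal$ is jointly continuous, this drift disappears in the limit. A second point needing care is $\Ical$ not being closed under the limits, but it is compact, hence closed.
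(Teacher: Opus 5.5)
Your proposal is correct and follows essentially the same route as the paper: the direct method for existence, a Gronwall energy bound that together with $\Dcal\ge\gamma_0$ on $\Ical\setminus\{0\}$ bounds the number of jumps uniformly in $N$, convergence of the jump times and (strongly, by compactness of $\Ical$) of the increments, stability at non-jump times by passing to the limit, and a step-by-step limit through the coalescing jumps giving $p(s^+)\in\Bcal_m(s,p(s))$, which together with stability of $p(s^+)$ and Remark~\ref{rem:ces}~b) yields \eqref{eq:EI}. One point in your favour: you test minimality with the competitors $p^N+q$ for a fixed $q\in\Ical$, whereas the paper uses a fixed $\tilde p\in\Pcal$, and your choice is what makes the passage to the limit in $\Dcal_{\Ical}$ legitimate, because $\tilde p-p^N(t)$ may lie outside $\Ical$ for every $N$ even when $\tilde p-p(t)\in\Ical$.
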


We precede the proof of Theorem  \ref{thm:existence_rimp} by a remark concerning our choice of the technical setting.

\begin{remark}
For clarity, we have restricted ourselves to the case of vanishing body forces $f\equiv 0$, thereby isolating the effect of the constrained set of admissible inelastic increments. We expect that the analysis extends to non-zero loading under a suitable \emph{safe-load condition}, as is standard in the theory of perfect plasticity, see \cite{DDM06,MiR15}. 
Since these arguments would largely follow the well-established techniques developed in the cited references, while being largely orthogonal to the novel aspect of the present work—the introduction of constraints on admissible inelastic increments—we have chosen to work in the present simplified setting. Another possibility would be to introduce hardening into the model. The resulting coercivity of the energy with respect to the inelastic variable provides a direct route to the required a priori estimates, avoiding the need for a safe-load condition.
\end{remark}

\begin{proof}[Proof of Theorem \ref{thm:existence_rimp}]
The existence of solutions to \eqref{eq:rimp} follows from the direct method using the weak lower semicontinuity of $\Jcal(t,\cdot)$ and $\Dcal$ together with the $L^2$-coercivity coming from the second part of \eqref{eq:Idiscrete} and the non-negativity of $\Jcal$ given that $f \equiv 0$. We denote the time steps by $t^N_k:=kT/N$, the discrete solutions by $(p^N_k)_k$, where $p^N_0=p_0$ for all $N$, and the left-continuous interpolations are denoted by $p^N$. We now split the proof into two steps. \medskip

\textit{Step 1: Compactness.} We first prove suitable energy estimates that show that only finitely many transitions can happen. To do this, we compute for a.e.~$t \in (0,T)$
\begin{align*}
\partial_t \Jcal(t,p) = \int_{\Omega} \C(e(u(t,p))-p): e(\partial_t u(t,p)) \dd x = \int_{\Omega} \C(e(u(t,p))-p):e(g'(t))\dd x,
\end{align*}
where the second equality uses the Euler-Lagrange equations applied to the test function $\partial_t u(t,p)-g'(t)$, which is zero on $\Gamma$. Hence, we can estimate
\begin{align*}
\abs{\partial_t \Jcal(t,p)} &\leq C\norm{g'(t)}_{H^1(\Omega;\R^n)}\norm{e(u(t,p))-p}_{L^2(\Omega;\R^{n \times n})} \\
&\leq C\norm{g'(t)}_{H^1(\Omega;\R^n)} \left(\norm{e(u(t,p))-p}_{L^2(\Omega;\R^{n \times n})}^2+1\right) \\
&\leq \lambda(t) \left(\Jcal(t,p)+1\right)
\end{align*}
for some suitable $\lambda \in L^1([0,T])$. If we now define $\Jcal_k:=\Jcal(t_k,p_k)$ and $\Dcal_k:=\Dcal_{\Ical}(p_k-p_{k-1})$, then we get that
\[
\Jcal_k-\Jcal_{k-1} + \Dcal_k \leq \int_{t_{k-1}}^{t_k} \partial_s \Jcal(s,p_{k-1})\dd s \leq \int_{t_{k-1}}^{t_k} \lambda(s)\left(\Jcal(s,p_{k-1})+1\right)\dd s.
\] 
Following the estimates using Gr\"onwall's inequality as in \cite[page 53]{MiR15}, we arrive at
\[
1+\Jcal_N+\sum_{k=1}^{N} \Dcal_k \leq (1+\Jcal_0)e^{\Lambda(T)} =:K <\infty \quad
\text{with} \quad \Lambda(s):=\int_0^s \lambda(s) \dd s.
\]
Since \eqref{eq:Idiscrete} yields that $\Dcal_k$ must either be zero or larger or equal than $\gamma_0$, we find that there is a $M \in \N$ independent of $N$ such that 
\[
J_N:=\left\{ t \in [0,T) \,|\, p^N(t) \not = p^N(t^{+}) \right\}  \quad \text{satisfies} \quad \#J_N \leq M.
\]
By the boundedness of $\Ical$ from the second part of \eqref{eq:Idiscrete}, we obtain
\[
\sum_{k=1}^N \|p_k^N-p_{k-1}^N\|_{L^2(\Omega)}
\leq
C\sum_{k=1}^N \Dcal(p_k^N-p_{k-1}^N)
\leq C,
\]
where the constant is independent of $N$. Hence, the left-continuous interpolants
$p^N$ are uniformly bounded in
$BV([0,T];\Pcal)$. By Helly's selection principle for Banach-valued
$BV$ functions, after passing to a subsequence, we may assume that
$p^N(t)\rightharpoonup p(t)\in\mathcal P$ weakly for every
$t\in[0,T]$; in fact, by redefining $p$ at a finite number of $t \in (0,T]$, we may assume that $p$ is left-continuous. Moreover, by possibly choosing further subsequences, we may assume that $\#J_N = \ell$ for all $N$, that is, $J_N$ consists of
\[
0 \leq s^N_1 < \ldots < s^N_\ell < T,
\]
and we have that
\begin{align}
\begin{split}\label{eq:compactness}
s^N_i \to s_i \quad&\text{for all $i=1,\ldots,\ell$,}\\
q^N_i:=p^N((s^N_i)^{+})-p^N(s^N_{i}) \to q_i \quad &\text{for all $i=1,\ldots,\ell$,}
\end{split}
\end{align}
where the second part uses the compactness of $\Ical$ from \eqref{eq:Idiscrete}. This implies in particular that 
\[
J(p)\subset S:=\{s_1 , \ldots , s_\ell\}\,,
\]
where the $s_i$ need not be distinct, and a limiting time $s < T$ gives rise to a
jump only if the sum of the weak limits of all increments whose transition
times converge to $s$ is nonzero. Furthermore, 
\begin{equation}\label{eq:compactness2}
p^N(t) \to p(t) \quad \text{for all $t \in [0,T] \setminus S$.}
\end{equation}
 \smallskip

\textit{Step 2: $p$ solves \eqref{eq:S} and \eqref{eq:EI}.} 
We start with proving the global stability. To this aim, take $t \in (0,T) \setminus S$ and consider the sequence $(k_N)_N$ such that $t^N_{k_N-1} < t \leq t^N_{k_N}$. Then, because there is no jump at $t^N_{k_N}$ for all large $N$, we get by definition of \eqref{eq:rimp} that
\[
p^N(t) \in \argmin_{\Pcal} \Jcal(t^{N}_{k_N},\cdot) + \Dcal_{\Ical}(\cdot - p^N(t)).
\]
In particular, it holds for all $\tilde{p} \in \Pcal$ that
\[
0 \leq \Jcal(t^{N}_{k_N},\tilde{p})-\Jcal(t^{N}_{k_N},p^N(t)) + \Dcal_{\Ical}(\tilde{p} - p^N(t)).
\]
Using \eqref{eq:compactness2} and the continuity of the boundary condition in time, we find by taking the limit
\[
0\leq \Jcal(t,\tilde{p})-\Jcal(t,p(t)) + \Dcal_{\Ical}(\tilde{p} - p(t)) \quad \text{for all $\tilde{p} \in \Pcal$.}
\]
This proves global stability  for all $t \in (0,T) \setminus S$. The stability in $t=0$ follows from \eqref{eq:4}. For the case $t \in S$, we simply use that $p(t)$ is left-continuous and piecewise constant together with the continuity of the energy in time.

To characterize the jumps, we consider $t \in J(p)$ such that $s_{i-1}<t=s_i=\ldots=s_{i+m-1} < s_{i+m}$ for some $i \in \{1,\ldots,\ell\}$. We may delete from that sequence all $s_j$ that correspond to a jump of height zero via $q_j^N\to 0$. Then, with $q_i$ from \eqref{eq:compactness}, we show that 
\begin{equation}\label{eq:induction}
r_k:=p(t)+\sum_{j=0}^{k-1} q_{i+j} \in \Bcal_k(t,p(t)) \quad \text{for all $k=0,\ldots,m$},
\end{equation}
via induction on $k$. The case $k=0$ is by definition, so suppose now that $r_k \in \Bcal_k(t,p(t))$ for $k \in \{0,\ldots,m-1\}$. We have to show that
\begin{equation}\label{eq:jumpstability}
r_k+q_{i+k} \in \argmin_{\Pcal} \Jcal(t,\cdot)+\Dcal_{\Ical}(\cdot - r_k).
\end{equation}
With $r_k^{N}:=p^N(s_i^{N})+\sum_{j=0}^{k-1} q_{i+j}^N$, we find that
\[
r_k^N+q_{i+k}^N \in \argmin_{\Pcal} \Jcal(s^N_{i+k},\cdot)+\Dcal_{\Ical}(\cdot - r^N_k),
\]
and thus, for every $\tilde{p} \in \Pcal$ we find
\[
\Jcal(s^N_{i+k},r_k^N+q_{i+k}^N)+\Dcal_{\Ical}(q_{i+k}^N)\leq \Jcal(s^N_{i+k},\tilde{p})+\Dcal_{\Ical}(\tilde{p} - r^N_k).
\]
Using \eqref{eq:compactness} and \eqref{eq:compactness2}, we may take the limit $N \to \infty$ to find that
\[
\Jcal(t,r_k+q_{i+k})+\Dcal_{\Ical}(q_{i+k})\leq \Jcal(t,\tilde{p})+\Dcal_{\Ical}(\tilde{p} - r_k) \quad \text{for all $\tilde{p} \in \Pcal$,}
\]
which establishes \eqref{eq:jumpstability}. This proves \eqref{eq:induction}, and, in particular, shows that
\[
p(t^{+}) = r_m \in \Bcal_m(t,p(t)).
\]
Since $p(t^{+})$ also satisfies global stability, we must have $p(t^{+}) \in \Acal(t,p(t))$ as well, which finishes the proof.
\end{proof}
\begin{remark}
It is possible that not every constrained energetic solution  can be recovered from a sequence of solutions of \eqref{eq:rimp}; for example, it may happen that $\Acal(t,p_0)\setminus\{p_0\}$ is only non-empty at an irrational $t \in [0,T]$, which will never coincide with $t_k$ for some $k$. Hence, the time-incremental solution is only constantly $p_0$, while a constrained energetic solution can have transitions. On the other hand, if we allow the partition of $[0,T]$ to be arbitrary (including allowing certain time steps to be zero), then we can trivially recover any constrained energetic solution  with solutions of \eqref{eq:rimp}. In fact, we can match the solution exactly for $N$ large enough.
\end{remark}

Since the concept of constrained energetic solution  is completely tied to the sets $\Acal(t,p)$, cf.~Remark~\ref{rem:ces}~b), it is interesting to understand this set better. It is clear that for $p \in \Pcal$ the set
\[
\argmin_{\Pcal} \Jcal(t,\cdot) +\Dcal_{\Ical}(\cdot - p),
\]
plays an important role. Introducing the functional 
\begin{equation}\label{eq:changefunc}
\Fcal_p(t,q):=\Jcal(t,p+q)-\Jcal(t,p) +\Dcal_{\Ical}(q),
\end{equation}
we find
\[
\argmin_{\Pcal} \Jcal(t,\cdot) +\Dcal_{\Ical}(\cdot - p) = \{p\}+\argmin_{q \in \Pcal} \Fcal_p(t,q).
\]
The reason to do this is the following representation of $\Fcal_p$, showing that the change in energy is only dependent on the increment $q$, and the current stress integrated against the increment $q$.

\begin{lemma}[Energy change]
For $p,q \in \Pcal$, it holds that
\[
\Fcal_p(t,q) = -\int_{\Omega}\sigma(t,p):q\dd x + \Jcal_0(q)+\Dcal_{\Ical}(q),
\]
where $\sigma(t,p):=\C (e(u(t,p))-p)$ is the stress and $\Jcal_0(q)$ is the reduced elastic energy with zero loading and boundary conditions, i.e.,
\begin{equation}\label{eq:J0}
\Jcal_0(q):=\min\left\{ \int_{\Omega} \frac{1}{2}\C (e(w)-q):(e(w)-q)\dd x \, \Big|\, w \in H^1(\Omega;\R^n), \ w=0 \ \text{on $\Gamma$}\right\}.
\end{equation}
\end{lemma}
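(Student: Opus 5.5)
The plan is to exploit the linear structure of the minimization problem defining $\Jcal$ and split the displacement associated with $p+q$ into the displacement for $p$ plus a corrector with homogeneous boundary data. Concretely, I will write $u(t,p+q)=u(t,p)+w_q$, where $w_q\in H^1(\Omega;\R^n)$ with $w_q=0$ on $\Gamma$ is defined as the minimizer in \eqref{eq:J0}. First I will check that this sum is indeed the minimizer for $p+q$. The functional $u\mapsto \Ecal(t,u,p+q)$ (with $f\equiv 0$) is strictly convex on the affine space $\{u=g(t)\text{ on }\Gamma\}$, so its minimizer is characterized by the Euler--Lagrange equation
\[
\int_\Omega \C(e(u)-p-q):e(\varphi)\dd x=0\quad\text{for all }\varphi\in H^1(\Omega;\R^n),\ \varphi=0\text{ on }\Gamma.
\]
This equation is satisfied by $u(t,p)+w_q$, since it is the sum of the Euler--Lagrange equations for $u(t,p)$ (with data $p$, boundary value $g(t)$) and for $w_q$ (with data $q$, boundary value $0$). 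The boundary condition also holds.

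Next I will expand the quadratic form:
\[
\Jcal(t,p+q)=\tfrac12\int_\Omega \C\bigl((e(u(t,p))-p)+(e(w_q)-q)\bigr):\bigl((e(u(t,p))-p)+(e(w_q)-q)\bigr)\dd x.
\]
Using the symmetry of $\C$, this equals
\[
\Jcal(t,p)+\int_\Omega \sigma(t,p):(e(w_q)-q)\dd x+\Jcal_0(q).
\]
The cross term splits as $\int\sigma(t,p):e(w_q)-\int\sigma(t,p):q$. The first piece vanishes by the Euler--Lagrange equation for $u(t,p)$ tested with $\varphi=w_q$, which is admissible because $w_q=0$ on $\Gamma$. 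Subtracting $\Jcal(t,p)$ and adding $\Dcal_\Ical(q)$ then gives the claimed formula for $\Fcal_p(t,q)$.

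The only point needing some care is the well-posedness of the minimizers $u(t,p)$ and $w_q$, that is, existence and uniqueness. This follows from Korn's inequality on $\{w=0 \text{ on }\Gamma\}$ with $\Hcal^{n-1}(\Gamma)>0$ and positive definiteness of $\C$ on symmetric matrices; the paper already takes it for granted. With that in place, the argument is a routine quadratic expansion, and I do not expect a real obstacle. The main thing to track is the sign of the cross term and that the test function $w_q$ has zero trace on $\Gamma$.
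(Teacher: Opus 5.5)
Your proof is correct and follows the paper's argument: you use superposition $u(t,p+q)=u(t,p)+w(q)$ with $w(q)$ the minimizer in \eqref{eq:J0}, expand the quadratic form, and kill the cross term $\int_\Omega\sigma(t,p):e(w(q))\dd x$ with the Euler--Lagrange equation of $u(t,p)$ tested against $w(q)$. The one difference is that you set $f\equiv 0$, whereas the paper keeps the load. This costs nothing: the Euler--Lagrange equation for $u(t,p)$ then reads $\int_\Omega\sigma(t,p):e(\varphi)\dd x=\int_\Omega f(t)\cdot\varphi\dd x$, and testing it with $\varphi=w(q)$ cancels the extra term $-\int_\Omega f(t)\cdot w(q)\dd x$ in the same step.
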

\begin{proof}
Let $w(q) \in H^1(\Omega;\R^n)$ be the minimizer of \eqref{eq:J0}, then it follows from the linearity of the Euler-Lagrange equations that $u(t,p+q) = u(t,p)+w(q)$. Hence, we find after some algebraic manipulations
\begin{align*}
\Jcal(t,p+q)-\Jcal(t,p) &= \int_{\Omega} \C(e(u(t,p))-p):(e(w(q))-q) -f(t)\cdot w(q) \\
&\qquad\qquad+\frac{1}{2}\C(e(w(q))-q):(e(w(q))-q)\dd x \\
&=\int_{\Omega} -\C(e(u(t,p))-p):q+\frac{1}{2}\C(e(w(q))-q):(e(w(q))-q)\dd x \\
&=-\int_{\Omega} \sigma(t,p):q \dd x + \Jcal_0(q),
\end{align*}
where the second line uses the weak Euler-Lagrange equation of $u(t,p)$ with test function $w(q)$.
\end{proof}

\begin{remark}
In classical rate-independent plasticity, incremental stability is equivalent to requiring that the thermodynamic force associated with the inelastic variable belong to the elastic domain (or yield set). This condition is characterized by the dissipation potential through the inclusion $-\partial_p\mathcal E(u,p)\in\partial \mathcal D(0)$, and is given by 
\[
K=\left\{\sigma \,\Big|\, \int_\Omega\sigma:q\d x\leq\mathcal D(q) \quad\forall q\in\mathcal P\right\}\,.
\]

In the present framework, stability is tested only with respect to the prescribed family of admissible increments. Consequently, the classical elastic domain is replaced by an effective constrained stability domain, consisting of those stress states for which no admissible increment decreases the total energy. The effective constrained stability domain is given by 
\[
K_{\mathcal I}=\left\{\sigma \,\Big|\, \int_\Omega\sigma:q\d x\leq\mathcal D(q)+\Jcal_0(q) \quad\forall q\in\mathcal I\right\}\,. \qedhere
\]
\end{remark}

\section{Set-up with localized plastic increments}

\label{sec:localized_plastic}

The rest of the paper focuses on understanding the set $\Acal(t,p)$ in certain prototypical situations where the set $\Ical$ is given by localized plastic events as in Example~\ref{ex:stz}. Since this is a completely static problem, we may ignore the time dependence, and consider zero loading and a fixed boundary condition.

To facilitate exact computations, we consider the case of anti-plane shear, also called mode-III shear. In this setting, the displacement $u$ is only non-zero in the third component and is independent of the last variable, that is,
\[
u(x) = (0,0,u_3(x_1,x_2)), \quad \text{for $x \in \Omega$}.
\]
By identifying all the quantities with their dimensionally reduced versions, we can reformulate the problem as a scalar two-dimensional problem on a bounded Lipschitz domain $\Omega \subset \R^2$, $\Ucal=H^1(\Omega)$, $\Pcal = L^2(\Omega;\R^2)$, elastic energy given by
\[
\Ecal:\Ucal \times \Pcal \to [0,\infty], \quad \Ecal(u,p):= \begin{cases}
\displaystyle \int_{\Omega} \frac{\mu}{2}\abs{\nabla u -p}^2\dd x &\text{if $u=g$ on $\Gamma$,}\\
\infty &\text{otherwise},
\end{cases}
\]
with $\mu>0$ the bulk modulus, and dissipation potential
\[
\Dcal(q):\Pcal \to [0,\infty), \quad \Dcal(q)=\sigma_c\int_{\Omega}  |q| \dd x,
\]
with $\sigma_c >0$ the yield strength. 

For the admissible increments, cf.~Example~\ref{ex:stz}, we fix $\gamma_0,\delta>0$ and consider the set
\begin{equation}\label{eq:increments}
\Ical=\Ical_{\gamma_0,\delta}:=\{0\}\cup\left\{ \gamma_0 \mathbbm{1}_{B_\delta(y)} \nu \,|\, \nu \in \S^1, \ y \in \delta \Lambda \cap \Omega_\delta\right\} \quad \text{with} \quad \Omega_\delta:=\{x \in \Omega\,:\, \dist(x,\partial\Omega)\geq \delta\},
\end{equation}
for some suitable lattice $\Lambda \subset \R^2$; we will consider square and triangular lattices. For such increments $q_{\nu,y}:=\gamma_0 \mathbbm{1}_{B_\delta(y)} \nu$, we can compute that the functional in \eqref{eq:changefunc} leads to
\[
\Fcal_p(q_{\nu,y})=-\gamma_0\int_{B_\delta(y)} (\sigma(p) \cdot \nu -\sigma_c)\dd x + \gamma_0^2 \Jcal_0(\mathbbm{1}_{B_\delta(y)}\nu).
\]
In fact, in the case that $p$ and $q_{\nu,y}$ have disjoint support, we may use the fact that the stress $\sigma(p)$ is harmonic outside the support of $p$, to infer with the mean value property
\begin{equation}\label{eq:changestz}
\Fcal_p(q_{\nu,y})=-\gamma_0|B_\delta(0)| (\sigma(p)(y) \cdot \nu -\sigma_c) + \gamma_0^2 \Jcal_0(\mathbbm{1}_{B_\delta(y)}\nu).
\end{equation}
With this formulation, the sets $\Acal_k(p)$ and $\Bcal_k(p)$ now read
  \begin{equation}\label{eq:12}
\Bcal_0(p):=\{p\} \quad \text{and} \quad \Bcal_{k}(p):=\bigcup_{\tilde{p} \in \Bcal_{k-1}(p)} \left(\{\tilde{p}\}+\argmin_{\Pcal} \Fcal_{\tilde{p}}\right) \setminus\{\tilde{p}\}
\end{equation}
and
\begin{equation}\label{eq:Acal}
\Acal_0(p):=\{p\} \quad \text{and} \quad \Acal_k(p):=\Bcal_k(p) \cup \bigcup_{\tilde{p} \in \Acal_{k-1}(p)} \left(\{\tilde{p}\} + \argmin_{\Pcal} \Fcal_{\tilde{p}}\right) \cap \{\tilde{p}\}.
\end{equation}

In order to determine these sets, we need the updates to the displacements and stresses resulting from a plastic increment. If we define $w_{\nu,y} \in H^1(\Omega)$ as the unique weak solution of
\begin{equation}\label{eq:displacementupdate}
\begin{cases}
\Div (\nabla w_{\nu,y}-\mathbbm{1}_{B_\delta(y)}\nu)=0 &\text{on $\Omega$},\\
w_{\nu,y}=0 &\text{on $\Gamma$,}\\
(\nabla w_{\nu,y}-\mathbbm{1}_{B_\delta(y)}\nu) \cdot n_{\Omega}=0 &\text{on $\partial \Omega \setminus \Gamma$,}
\end{cases}
\end{equation}
with $n_{\Omega}$ the outward unit normal to $\partial \Omega$, we infer from the superposition principle that
\begin{equation}\label{eq:newdisplacementstress}
u(p+q_{\nu,y}) = u(p) + \gamma_0 w_{\nu,y} \quad \text{and} \quad \sigma(p+q_{\nu,y}) = \sigma(p) + \gamma_0\mu(\nabla w_{\nu,y}-\mathbbm{1}_{B_\delta(y)}\nu).
\end{equation}
In addition, we also find that
\begin{equation}\label{eq:quadraticcorr}
\begin{split}
\Jcal_0(\mathbbm{1}_{B_\delta(y)}\nu) &= \int_{\Omega} \frac{\mu}{2}|\nabla w_{\nu,y}-\mathbbm{1}_{B_\delta(y)}\nu|^2 \dd x\\
&=\frac{-\mu}{2}\int_{B_\delta(y)} \nu \cdot (\nabla w_{\nu,y}(x)-\nu)\dd x \\
&=\frac{\mu}{2}|B_\delta(0)| \left(1-\nu \cdot \nabla w_{\nu,y}(y)\right),
\end{split}
\end{equation}
where we used the Euler-Lagrange equation \eqref{eq:displacementupdate} with test function $w_{\nu,y}$ in the second line, and the mean value property in the third. This formula makes \eqref{eq:changestz} fully explicit in terms of the current stress and the displacement increments $w_{\nu,y}$.

Since $w_{\nu,y}$ plays a key role, we are also interested in cases where $\Omega$ is unbounded and explicit formulas are available, such as a full or half space, with boundary conditions at infinity. For such situations the elastic energy may be infinite, but the change in energy functional $\Fcal_p$ and the displacement update $w_{\nu,y}$ are still well-defined. Hence, the sets $\Acal_{k}(p)$ are still meaningful. In addition, these settings will help us understand the bounded domain case via perturbation methods.

\section{Formation and thickening of shear bands in full space}

\label{sec:formation_thickening}
In this section we consider the case that $\Omega=\R^2$  with linear boundary conditions at infinity. Of course, in this case, the energy may become ill-defined.
Therefore we directly work with the energy-change functional \eqref{eq:changestz}, which is still well-defined in this context, and the definition of the permissible sets 
$\Acal_k(p), \Bcal_k(p)$ for $k \in \N_0$ is the one given in \eqref{eq:12}, \eqref{eq:Acal}.

We show that the admissible plastic increments form shear bands with unbounded length. In fact, depending on the lattice, we show that after the formation of shear bands either a new band forms next to it, or the activation of plastic zones halts.

Precisely, we consider the linear boundary condition
\[
g(x) = \gamma x_2
\]
at infinity, that is, we work on the space
\[
\mathcal U=\left\{u\in H^1_{\mathrm{loc}}(\R^2):\lim_{x_2\to\infty} \|u(\cdot,x_2)-\gamma x_2\|_{L^2(\R)}= 0\right\}\equiv \{ x\mapsto \gamma x_2\}+\mathcal U_0\,.
\]
The lattices we are interested in are given by 
\[
\Lambda_{s}:=\{  2(m,n) \,:\, m,n \in \Z\}=(2\Z)^2,
\]
and
\[
\Lambda_{t}:=\{((2m,0)+(n,\sqrt{3}n)) \,:\, m,n \in \Z\}.
\]
It can be verified that the unique solution of $\Div (\nabla w_{\nu,y}-\mathbbm{1}_{B_\delta(y)}\nu)=0$ in $\mathcal U_0$  is given by
\begin{equation}\label{eq:displcamentr2}
w_{\nu,y}(x)=v_\nu(x-y) \quad \text{with} \quad v_\nu(x):=\begin{cases}
\dfrac{1}{2}\nu \cdot x &\text{for $x \in B_\delta(0)$},\\[2ex]
\dfrac{\delta^2}{2} \dfrac{\nu \cdot x}{|x|^2} &\text{for $x \in B_\delta(0)^c$},
\end{cases}
\end{equation}
which yields from \eqref{eq:quadraticcorr} that
\begin{equation}\label{eq:fpr2}
\Fcal_p(q_{\nu,y})=-\gamma_0|B_\delta(0)|\left( \sigma(p)(y) \cdot \nu -\sigma_c - \frac{\gamma_0\mu}{4} \right).
\end{equation}
It is clear that for a fixed $y$, this quantity is minimized when $\nu = \sigma(p)(y)/\abs{\sigma(p)(y)}$, leading to the characterization
\begin{equation}\label{eq:minimizerschar}
q_{\nu,y} \in \argmin_{\Pcal} \Fcal_p \quad \Leftrightarrow \quad \nu = \frac{\sigma(p)(y)}{\abs{\sigma(p)(y)}} \quad \text{and} \quad |\sigma(p)(y)|=\max_{\tilde{y} \in \delta \Lambda} |\sigma(p)(\tilde{y})| \geq \sigma_c +\frac{\gamma_0\mu}{4}.
\end{equation}

\subsection{Formation of shear bands}

With these preparations we prove the following result.

\begin{theorem}\label{th:shearband}
Let $\Omega =\R^2$, and the boundary condition at infinity given by $g(x)=\gamma x_2$ with
\[
\gamma = \frac{\sigma_{c}}{\mu}+\frac{\gamma_0}{4} \geq \gamma_0.
\]
Suppose the admissible increments $\Ical$ are given by \eqref{eq:increments} with $\Lambda \in \{\Lambda_s,\Lambda_t\}$.  Then, it holds that
\[
\Acal_{k}(0) = \{0\} \cup \left\{ \gamma_0\sum_{j=0}^{k-1} e_2\mathbbm{1}_{B_\delta(x_0+2j\delta e_1)} \ \Big| \ \text{$x_0 \in \delta \Lambda$}\right\} \quad \text{for any $k \in \N$},
\]
with $\Acal_k(p)$ as in \eqref{eq:Acal}.
\end{theorem}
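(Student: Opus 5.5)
The plan is to prove the statement by induction on $k$, computing the stress field of a horizontal chain explicitly and applying the characterization \eqref{eq:minimizerschar} at each step. Write $c:=\gamma\mu=\sigma_c+\gamma_0\mu/4$. For $p=0$ the stress is the constant $\sigma(0)=c\,e_2$, so every lattice site attains the maximum $|\sigma(0)|=c$, which is exactly the threshold in \eqref{eq:minimizerschar}, and $\nu=e_2$. By \eqref{eq:fpr2}, $\Fcal_0(q_{e_2,y})=0=\Fcal_0(0)$. Hence $\argmin\Fcal_0=\{0\}\cup\{q_{e_2,y}:y\in\delta\Lambda\}$. Consequently $0$ is stable, $0\in\Acal_k(0)$ for all $k$, and $\Bcal_1(0)$ consists of all single zones. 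This gives the case $k=1$.

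For the inductive step, let $p_k:=\gamma_0\sum_{j=0}^{k-1}e_2\mathbbm{1}_{B_\delta(x_0+2j\delta e_1)}$. By superposition \eqref{eq:newdisplacementstress} and \eqref{eq:displcamentr2}, at a lattice site $x$ outside the chain the stress is
\[
\sigma(p_k)(x)=c\,e_2+\frac{\gamma_0\mu\delta^2}{2}\sum_{j=0}^{k-1}\Bigl(\frac{e_2}{|z_j|^2}-\frac{2(z_j)_2z_j}{|z_j|^4}\Bigr),\qquad z_j:=x-x_0-2j\delta e_1 .
\]
Identifying $z=z_1+iz_2$, the bracket has $e_2$-component $\mathrm{Re}(1/z^2)$ and $e_1$-component $\mathrm{Im}(1/z^2)$. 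Thus $\sigma(p_k)(x)/\mu$ equals $\gamma e_2+\tfrac{\gamma_0\delta^2}{2}f_k(x)$, where $f_k(x)=\sum_j (x-y_j)^{-2}$ is read as a complex number with real part along $e_2$.

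At the two extension sites $x_0-2\delta e_1$ and $x_0+2k\delta e_1$, $f_k$ is real and equals $\frac{1}{4\delta^2}\sum_{j=1}^{k}j^{-2}>0$. So the stress there points along $e_2$ and strictly exceeds the threshold. This has three consequences. First, $p_k$ is not stable, so chains of length less than $k$ are excluded from $\Acal_k(0)$. Second, the minimizers of $\Fcal_{p_k}$ are nonzero. Third, the chain-extension sites are admissible with $\nu=e_2$. Inside the activated balls, the self-term lowers the stress by $\gamma_0\mu/2$. Since the contributions of the other zones are small, these sites are not maximizers.

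It remains to show that $|\sigma(p_k)|$ at every other lattice site is strictly smaller than at the two endpoints. Then $\Bcal_{k+1}(0)$ consists exactly of the chains of length $k+1$; chains obtained by adding at the left end are the same family with $x_0$ relabelled. I would expand
\[
|\gamma+\tfrac{\gamma_0\delta^2}{2}f|^2=\gamma^2+\gamma\gamma_0\delta^2\,\mathrm{Re}f+\tfrac{\gamma_0^2\delta^4}{4}|f|^2 .
\]
Using $\gamma\ge\gamma_0$, it suffices to show that $\mathrm{Re}f_k(x)+\tfrac{\delta^2}{4}|f_k(x)|^2$ is strictly below its endpoint value. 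On the horizontal line through the chain, $f_k$ is real and decreases with distance beyond the ends, so this case is straightforward. For sites in a row at height $b\neq0$, where $|b|\ge\sqrt3\delta$ for $\Lambda_t$ and $|b|\ge2\delta$ for $\Lambda_s$, I would bound the row sum $\sum_j (a_j^2-b^2)/(a_j^2+b^2)^2$ of real parts and the sum $\sum_j |z_j|^{-2}$ uniformly in $k$ and in the horizontal offset. For this I would compare with the full bi-infinite row sums, which Mittag-Leffler/$\pi^2/\sinh^2$-type identities (as in Appendix~\ref{sec:auxiliary_series}) make explicit. The target is to show these are smaller than the endpoint value $\frac{1}{4\delta^2}\sum_{j\le k}j^{-2}\ge\frac1{4\delta^2}$.

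I expect this off-axis comparison to be the main obstacle. It must hold for all $k$ and both lattices, and the triangular lattice is tighter because its rows are closer and horizontally shifted. The truncated row sums are not monotone in $k$ in an obvious way, so I would split each sum into its positive part (terms with $|a_j|>|b|$) and its negative part. I would then bound the positive part by the endpoint series itself, shifted and damped by $b$, and check the nearest row explicitly.
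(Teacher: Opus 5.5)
You have the paper's skeleton. It runs induction on $k$ through \eqref{eq:minimizerschar}, uses the uniform stress $\mu\gamma e_2$ sitting exactly at the threshold for the base case, and superposes the exterior Eshelby fields. It gets the endpoint value $\mu\bigl(\gamma+\frac{\gamma_0}{8}\sum_{j=1}^k j^{-2}\bigr)$ along $e_2$ and the loss of $\gamma_0\mu/2$ at activated centres. There you should still quantify ``small'': the other zones add at most $\frac{\gamma_0}{8}\cdot\frac{\pi^2}{6}$ more than at the endpoint, and this is $<\frac{\gamma_0}{2}$.

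\textbf{The gap.} The step that carries the theorem is that every off-axis site has strictly smaller $|\sigma(p_k)|$, for all $k$ and both lattices. You only announce this, and the tool you name does not deliver it.
\begin{itemize}
\item Bi-infinite row sums do not control the truncated sums $\sum_{j=0}^{k-1}\frac{a_j^2-b^2}{(a_j^2+b^2)^2}$, which can have the opposite sign and a much larger size. On the first row of $\Lambda_s$ (integer offsets $\ell$, $b=1$ after scaling out $2\delta$), the full sum is $-\pi^2/\sinh^2\pi\approx-0.07$. The window $\ell\in\{2,3,\dots\}$ gives about $+0.46$.
\item The tightest case is the first row of $\Lambda_t$ ($\delta=1$, odd $a_j$, $b=\sqrt3$). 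Your bound $\frac14\bigl(\sum_j|z_j|^{-2}\bigr)^2\le\frac14\bigl(\frac{\pi}{2\sqrt3}\tanh\frac{\pi\sqrt3}{2}\bigr)^2\approx0.20$ leaves room, against the uniform target $\frac14$, only for a truncated real-part sum below about $0.05$.
\item Splitting into positive and negative parts with the termwise bound $\frac{a^2-b^2}{(a^2+b^2)^2}\le\frac{1}{a^2+b^2}$ only yields about $0.15$.
\end{itemize}

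\textbf{What is missing} is the paper's Lemma~\ref{lem:sumleq0}. For a site lying above the chain, the window of offsets contains $0$, or one of $\pm1$ on the shifted rows of $\Lambda_t$. The one-sided partial sums are unimodal: they decrease while $|a|<|b|$ and increase afterwards. Hence each is bounded by the larger of its first value and its limit, which Lemma~\ref{le:sums} computes. This forces the truncated sum to be $\le0$ on $\Lambda_s$ and $\le\frac{\pi^2}{4\cosh^2(\pi\sqrt3/2)}\approx0.04$ on the shifted rows of $\Lambda_t$. The paper pairs this with two further ingredients.
\begin{itemize}
\item A separate bound on the $e_1$-component: its terms change sign across $x$, and $\frac{2|a|b}{(a^2+b^2)^2}\le\frac{3\sqrt3}{8a^2}$.
\item Sites not above the chain are handled by projecting onto the axis: $\bigl|\gamma e_2+\frac{\gamma_0}{2}\sum_j z_j^{-2}\bigr|\le\gamma+\frac{\gamma_0}{2}\sum_j|z_j|^{-2}$. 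The right side is at most the on-axis value, which is no larger than the endpoint value there and decreases beyond the ends.
\end{itemize}

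\textbf{A second, smaller issue} is your reduction via $\gamma\ge\gamma_0$. It is incomplete if ``its endpoint value'' means the same expression evaluated at $x_\pm$. After dividing by $\gamma\gamma_0$, the comparison of squared moduli is affine in $t=\gamma_0/\gamma\in(0,1]$, and $t$ can be arbitrarily small here. So checking $t=1$ is not enough; you also need $\mathrm{Re}f_k(x)\le f_k(x_\pm)$. The clean fix is to use $\gamma_0\le\gamma$ only at $x$ and drop the quadratic term at the endpoint, i.e.\ prove $\mathrm{Re}f_k(x)+\frac{\delta^2}{4}|f_k(x)|^2<f_k(x_\pm)$. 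Alternatively, check both ends in $t$, as the paper does with its convexity argument.
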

\begin{proof}
By rescaling we may assume without loss of generality that $\delta=1$. Note that for initial plasticity $p_0=0$, we have that the displacement is given by $u(p_0) = g \in H^1(\R^2)$. In particular, the stress $\sigma(p_0) \equiv \mu \gamma e_2$ is uniform and satisfies
\[
|\sigma(p_0)|= \mu \gamma = \sigma_c + \frac{\mu \gamma_0}{4}.
\]
Hence, \eqref{eq:minimizerschar} is satisfied for any $y \in \Lambda$ and we deduce
\[
\Acal_1(0) = \{0\} \cup \left\{ \gamma_0e_2\mathbbm{1}_{B_1(x_0)} \,|\, x_0 \in \Lambda\right\}.
\]
We now proceed by induction and assume that
\[
\Acal_{k}(0)=\{0\} \cup \left\{ \gamma_0\sum_{j=0}^{k-1} e_2\mathbbm{1}_{B_1(x_0+2j e_1)} \ \Big| \ \text{$x_0 \in \Lambda$}\right\}
\]
for some $k \geq 1$. To determine $\Acal_{k+1}(0)$, we take $p \in \Acal_{k}(0) \setminus \{0\}$ and need to determine $\argmin \Fcal_p$, which by \eqref{eq:minimizerschar} reduces to finding where the modulus of the stress $\sigma(p)$ is maximized. By translation invariance, we may assume that
\[
p=\gamma_0\sum_{j=0}^{k-1} e_2\mathbbm{1}_{B_1(2j e_1)},
\]
which yields by \eqref{eq:newdisplacementstress} and \eqref{eq:displcamentr2}
\begin{equation}\label{eq:stressr2}
\frac{\sigma(p)(x)}{\mu} = \gamma e_2 + \gamma_0 \sum_{j=0}^{k-1} \nabla v(x-2j e_1)-e_2\mathbbm{1}_{B_1(2j e_1)}(x),
\end{equation}
where
\begin{equation}\label{eq:eshelbystress}
\nabla v(x) = \begin{cases}
\left(0,\dfrac{1}{2}\right) &\text{if $x \in B_1(0)$},\\[2ex]
\dfrac{1}{2}\left(\dfrac{-2x_1x_2}{\abs{x}^4},\dfrac{x_1^2-x_2^2}{\abs{x}^4}\right) &\text{if $x \in \R^2 \setminus B_1(0)$}.
\end{cases}
\end{equation}
Equivalently, with $z:=x_1-x_2 i \in \C$ we find that 
\[
\partial_1 v(x)i + \partial_2v(x) = \frac{1}{2}\frac{z^2}{\abs{z}^4} \quad \text{for $x \in \R^2 \setminus B_1(0)$}.
\]
Hence, $\nabla v(x)$ is a vector of length $1/(2\abs{x}^2)$, which rotates depending on the angle with the origin; it points in the $-e_2$ direction when $x_1=0$ and the $e_2$ direction when $x_2=0$. We aim to show that $|\sigma(p)|$ is maximized precisely at $-2 e_1$ and $2k e_1$, which would yields the desired result. We distinguish between the two lattices. \medskip

\textit{Case 1: $\Lambda = \Lambda_s$.} We can compute that
  \begin{equation}
\label{eq:3}
\frac{|\sigma(p)(-2e_1)|}{\mu} = \frac{|\sigma(p)(2ke_1)|}{\mu} = \gamma + \frac{\gamma_0}{8} \sum_{j=1}^{k} \frac{1}{j^2} \geq \gamma+\frac{\gamma_0}{8}.
\end{equation}
To compare this with other points, consider first $x=(2m,2n) \in \Lambda_s$ with $m \not \in \{0,\ldots,k-1\}$, which is outside any activated plastic zone. Then, we infer that
\begin{align*}
|\sigma(p)(x)| &= \mu \left| \gamma e_2 + \gamma_0 \sum_{j=0}^{k-1} \nabla v(x-2j e_1) \right| \\
&\leq |\sigma(p)(2m,0)| \leq |\sigma(p)(-2e_1)|,
\end{align*}
where the first inequality uses that the functions $\nabla v(\cdot - 2je_1)$ each increase in length and point in the preferred direction $e_2$ when moving from $(2m,2n)$ to $(2m,0)$. Similarly for the second inequality, these functions become larger in length. Also note that for $x=(2m,0)$ with $m\not \in\{-1,k\}$, we have
\[
\frac{|\sigma(p)(-2e_1)|-|\sigma(p)(x)|}{\mu}\geq \frac{|\sigma(p)(-2e_1)|-|\sigma(p)(-4e_1)|}{\mu}\geq c\gamma_0
\]
for some positive constant $c$.

\medskip

Secondly, we consider $x=(2m,2n) \in \Lambda_s$ with $m \in \{0,\ldots,k-1\}$ and $n\not =0$. Again, we are outside any activated zone, so we find
\[
\frac{\sigma(p)(x)}{\mu}= (0,\gamma) + \frac{\gamma_0}{8} \left(\sum_{j=0}^{k-1} \frac{-2(m-j)n}{((m-j)^2+n^2)^2},\sum_{j=0}^{k-1}\frac{(m-j)^2-n^2}{((m-j)^2+n^2)^2}\right).
\]
Estimating the sum in the first component, we have that
  \begin{equation}\label{eq:comp1estimate}
\abs*{\sum_{j=0}^{k-1} \frac{-2(m-j)n}{((m-j)^2+n^2)^2}} \leq \sum_{j=1}^{\infty} \frac{2j|n|}{(j^2+n^2)^2}
\leq \sum_{j=1}^\infty\max_{t \geq 0}\frac{2jt}{(j^2+t^2)^2}
 = \frac{3\sqrt{3}}{8}\sum_{j=1}^{\infty} \frac{1}{j^2} = \frac{\sqrt{3}\pi^2}{16}\,.
\end{equation}

For the sum in the second component, we first note the lower bound
  \begin{equation}\label{eq:lowercomp2estimate}
  \begin{split}
\sum_{j=0}^{k-1} \frac{(m-j)^2-n^2}{((m-j)^2+n^2)^2}&\geq \sum_{j=-n}^{n} \frac{j^2-n^2}{(j^2+n^2)^2}\\
&\geq -\frac{1}{n^2}-2 \sum_{j=1}^{n} \min_{t \geq 0} \frac{j^2-t^2}{(j^2+t^2)^2}\\
&= -\frac{1}{n^2}-\frac{1}{4} \sum_{j=1}^{n} \frac{1}{j^2} \\
&\geq \min\{-1,-1/4-\pi^2/24\}=-1\,.
\end{split}
\end{equation}
The upper bound inequality 
  \begin{equation}
\sum_{j=0}^{k-1} \frac{(m-j)^2-n^2}{((m-j)^2+n^2)^2}\leq 0\label{eq:uppercomp2estimate}
\end{equation}
follows from  Lemma \ref{lem:sumleq0}, and hence we obtain
\[
\sum_{j=0}^{k-1} \frac{(m-j)^2-n^2}{((m-j)^2+n^2)^2}\in [-1,0]\,.
\]
Now we set $a_s:=\frac{\sqrt{3}\pi^2}{128}$, $t=\frac{\gamma_0}{\gamma}\leq 1$, and note that the above bounds imply
\[
\frac{|\sigma(p)(x)|}{\mu}\leq \gamma \sqrt{1+a_s^2t^2}\,.
\]
Combining this with \eqref{eq:3}, we obtain 
\[
\frac{|\sigma(p)(x)|-|\sigma(p)(-2 e_1)|}{\mu \gamma_0}\leq \frac{\sqrt{1+a_s^2 t^2}-1-\frac18t}{t}\,.
\]
 The function on the right extends continuously to $t=0$ with value
 $-1/8$ and is strictly negative  on $[0,1]$ (since the numerator is convex, and negative in $\{0,1\}$). Therefore
we have shown that for all $x=(2m,2n)$ with $m\in\{0,\dots,k-1\}$, $n\neq 0$,
\[
\frac{|\sigma(p)(-2 e_1)|-|\sigma(p)(x)|}{\mu }>c\gamma_0 \,,
\]
if $c$ is chosen small enough. In particular, $c$ may be chosen independently of $k$. 

Finally, we consider the case $x=(2m,0) \in \Lambda_s$ with $m \in \{0,\ldots,k-1\}$, i.e., $x$ is the center of an already activated zone. Then, we compute that
\[
\frac{|\sigma(p)(x)|}{\mu} = \gamma-\frac{\gamma_0}{2} + \frac{\gamma_0}{8}\sum_{j\in\{0,\ldots,k-1\}\setminus\{m\}} \frac{1}{(m-j)^2}.
\]
Hence, we find
\[
\frac{|\sigma(p)(-2e_1)|-|\sigma(p)(x)|}{\mu} = \frac{\gamma_0}{2} + \frac{\gamma_0}{8}\left(\sum_{j=1}^{k} \frac{1}{j^2}-\sum_{j\in\{0,\ldots,k-1\}\setminus\{m\}} \frac{1}{(m-j)^2}\right) \geq \frac{\gamma_0}{2}-\frac{\gamma_0}{8}\frac{\pi^2}{6}>c\gamma_0\,,
\]
if $c$ is chosen small enough.

All in all, we conclude that
\[
\argmax_{x \in \Lambda} |\sigma(p)(x)| = \{-2e_1,2ke_1\}.
\]
Since also $|\sigma(p)(-2e_1)|>\mu \gamma = \sigma_c + \frac{\mu\gamma_0}{4}$, we find that $0$ is not a minimizer of $\Fcal_{p}$ by \eqref{eq:fpr2}. Hence, we conclude
\[
\argmin_{\Pcal} \Fcal_p = \left\{\gamma_0 e_2 \mathbbm{1}_{B_1(-2e_1)},\gamma_0 e_2 \mathbbm{1}_{B_1(2ke_1)}\right\},
\]
which finishes the proof. \medskip

\textit{Case 2: $\Lambda = \Lambda_t$.} For the triangular grid, the only difference is to estimate the stress at different locations. It is enough to consider the points $x=(2m-1,(2n-1)\sqrt{3}) \in \Lambda_t$ with $m \in \{0,\ldots,k\}$ and $n \in \Z$, since all the other points can be handled as in one of the cases of the square grid. We compute
\[
\frac{\sigma(p)(x)}{\mu}= (0,\gamma) + \frac{\gamma_0}{2} \left(\sum_{j=0}^{k-1} \frac{-2(2(m-j)-1)(2n-1)\sqrt{3}}{((2(m-j)-1)^2+3(2n-1)^2)^2},\sum_{j=0}^{k-1}\frac{(2(m-j)-1)^2-3(2n-1)^2}{((2(m-j)-1)^2+3(2n-1)^2)^2}\right).
\]
Analogously to before, the  sum in the first component is estimated by
\begin{align}
\begin{split}\label{eq:comp1}
\abs*{\sum_{j=0}^{k-1} \frac{-2(2(m-j)-1)(2n-1)\sqrt{3}}{((2(m-j)-1)^2+3(2n-1)^2)^2}} &\leq 2\sum_{j=0}^{\infty}\frac{(2j+1)(2n-1)\sqrt{3}}{((2j+1)^2+3(2n-1)^2)^2} \\
&\leq \frac{3\sqrt{3}}{8} \sum_{j=0}^{\infty}\frac{1}{(2j+1)^2}=\frac{3\sqrt{3}\pi^2}{64},
\end{split}
\end{align}
by maximizing each term over $n$. The  sum in the second component can be estimated from below by
\begin{equation}\label{eq:secondcomponent}
\sum_{j=0}^{k-1}\frac{(2(m-j)-1)^2-3(2n-1)^2}{((2(m-j)-1)^2+3(2n-1)^2)^2} \geq -\frac{1}{8} \sum_{j\in \Z} \frac{1}{(2j+1)^2} = -\frac{\pi^2}{32}.
\end{equation}
From above it can be estimated using Lemma \ref{lem:sumleq0},
\begin{align}
\begin{split}\label{eq:comp2}
\sum_{j=0}^{k-1}\frac{(2(m-j)-1)^2-3(2n-1)^2}{((2(m-j)-1)^2+3(2n-1)^2)^2} 
\leq \frac{\pi^2}{4\cosh^2(\pi \sqrt{3} (2n-1)/2)} \leq \frac{\pi^2}{4\cosh^2(\pi\sqrt{3}/2)}\,.
\end{split}
\end{align}
Combining all of the above, we find with $t:=\gamma_0/\gamma \in (0,1]$
\begin{equation}\label{eq:convexest}
\frac{|\sigma(p)(x)|-|\sigma(p)(-2e_1)|}{\mu\gamma_0} \leq t^{-1}\left(\left(\left(1+\frac{\pi^2}{8\cosh^2(\pi\sqrt{3}/2)}t\right)^2+\left(\frac{3\sqrt{3}\pi^2}{128}\right)^2t^2\right)^{1/2} - 1 - \frac{1}{8}t\right)\,.
\end{equation}
The right hand side can be continuously extended to $t=0$, where it is stricly negative, as it is in $t=1$. The numerator of the fraction on the right hand is convex in $t$, which shows that there exists $c>0$ such that
\[
|\sigma(p)(-2e_1)|-|\sigma(p)(x)|\geq c\mu\gamma_0\,.
\]
\end{proof}
\begin{remark}
\label{rem:unifestimate1}
In the proof, we have shown slightly more than necessary for later reference. Namely, we have shown that there exists $c>0$ such that for every 
 \[
p=\gamma_0\sum_{j=0}^{k-1} e_2\mathbbm{1}_{B_1(2j e_1)},
\]
and for every $x\in \Lambda\setminus \{-2e_1,2ke_1\}$, 
we have that
\[
|\sigma(p)(-2e_1)|-|\sigma(p)(x)|\geq c\mu\gamma_0\,.
\]
This holds true both for the quadratic and the triangular lattice.
\end{remark}

\begin{remark}[Other shear directions]
After rotating, it is clear that similar results to Theorem~\ref{th:shearband} hold when $g(x)=\gamma x \cdot \nu$ where 
\begin{itemize}
\item[(i)] $\nu \in \{\pm e_1, \pm e_2\}$ when $\Lambda=\Lambda_s$,
\item[(ii)] $\nu \in \{\pm e_2,\pm(\sqrt{3}/2,1/2),\pm(-\sqrt{3}/2,1/2)\}$ when $\Lambda=\Lambda_t$. \qedhere
\end{itemize}
\end{remark}

\subsection{Thickening of shear bands}

Due to the unbounded energy in the case $\Omega=\R^2$, the sets $\Acal_k(0)$ never stabilize, and, formally speaking, a full band of plastic zones is formed of infinite length. For this reason, we consider that the plasticity is given by this full band, and investigate what happens afterwards. It turns out that there is a clear dichotomy between the two different lattices. Indeed, for the square lattice, it is most favorable to activate a zone as far away from the band as possible, which, due to the unbounded domain, is not possible to resolve. For the triangular lattice, the next plastic events do activate next to the band, and subsequently form a full band next to it. In fact, this process continues like this, with bands forming next to each other and thus forming an infinitely thick band. These results show, on the one hand, that the location of the zones plays a big role, which corresponds to the mesh dependence in finite element simulations, while, on the other hand, that there is no intrinsic length scale for the thickness of the shear band arising from this model. We formulate the results separately, starting with the square lattice.

\begin{proposition}
\label{prop:no_fattening_quad_lattice}
Let $\Omega =\R^2$, and the boundary condition at infinity given by $g(x)=\gamma x_2$ with
\[
\gamma = \frac{\sigma_{c}}{\mu}+\frac{\gamma_0}{4} \geq \gamma_0.
\]
Suppose the admissible increments $\Ical$ are given by \eqref{eq:increments} with $\Lambda =\Lambda_s$. Then, we have with
\[
p = \gamma_0\sum_{j\in \Z} e_2\mathbbm{1}_{B_\delta(x_0+2j\delta e_1)}
\quad \text{that} \quad
\Acal(p) =\Acal_k(p)= \{p\} \quad \text{for all $k \in \N$}.
\]
\end{proposition}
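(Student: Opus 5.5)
The plan is to show directly that $0$ is the \emph{unique} minimizer of $\Fcal_p$. Then $\Bcal_1(p)=\emptyset$, hence $\Bcal_k(p)=\emptyset$ for all $k\ge1$, and the recursion \eqref{eq:Acal} gives $\Acal_k(p)=\{p\}$ for all $k$, so $\Acal(p)=\{p\}$.

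By rescaling and translation invariance we may take $\delta=1$ and $x_0=0$. By \eqref{eq:fpr2}, uniqueness of the minimizer $0$ is equivalent to the strict inequality
\[
|\sigma(p)(y)|<\sigma_c+\tfrac{\mu\gamma_0}{4}=\mu\gamma\quad\text{for all } y\in\Lambda_s .
\]
Strictness matters: equality at some $y$ would make the increment $q_{\nu,y}$ with $\nu=\sigma(p)(y)/|\sigma(p)(y)|$ a second minimizer, since $\Fcal_p(q_{\nu,y})=0=\Fcal_p(0)$. For the infinite band, the stress is defined as in \eqref{eq:stressr2} with the sum taken over $j\in\Z$. This is the limit of the finite-band stresses, and the series converges absolutely because $|\nabla v(x)|=1/(2|x|^2)$.

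First take a lattice point off the band, $x=(2m,2n)$ with $n\neq0$. Then
\[
\frac{\sigma(p)(x)}{\mu}=\gamma e_2+\frac{\gamma_0}{8}\Bigl(\sum_{j\in\Z}\frac{-2(m-j)n}{((m-j)^2+n^2)^2},\ \sum_{j\in\Z}\frac{(m-j)^2-n^2}{((m-j)^2+n^2)^2}\Bigr).
\]
After the shift $j\mapsto m-j$, the summand of the first component is odd in $j$, so the first component vanishes. For the second component, differentiate the classical identity $\sum_{j\in\Z}\frac{a}{j^2+a^2}=\pi\coth(\pi a)$ in $a$. This gives
\[
\sum_{j\in\Z}\frac{j^2-n^2}{(j^2+n^2)^2}=-\frac{\pi^2}{\sinh^2(\pi n)}<0 ,
\]
which should be available in, or is a sibling of, the identities in the appendix used for Lemma~\ref{lem:sumleq0}. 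Hence $\sigma(p)(x)=\mu\bigl(\gamma-\frac{\gamma_0\pi^2}{8\sinh^2(\pi n)}\bigr)e_2$. The bracket is positive because $\gamma\ge\gamma_0$ and $\pi^2/(8\sinh^2\pi)<1$. Therefore $|\sigma(p)(x)|<\mu\gamma$.

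Next take an activated centre $x=(2m,0)$. The zone $B_1(x)$ contributes $\nabla v-e_2=-\tfrac12e_2$, and the remaining zones contribute $\frac{1}{8j^2}e_2$ each. This gives
\[
\frac{|\sigma(p)(x)|}{\mu}=\gamma-\frac{\gamma_0}{2}+\frac{\gamma_0}{8}\sum_{j\neq0}\frac1{j^2}=\gamma-\gamma_0\Bigl(\frac12-\frac{\pi^2}{24}\Bigr)<\gamma .
\]
Note also that $\gamma-\gamma_0/2>0$, so no sign issue arises in the modulus. Since $\Lambda_s$ consists only of these two kinds of points, the strict inequality holds on all of $\Lambda_s$, and the proposition follows.

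The main obstacle is the off-band case. There one needs the exact closed form (or at least the strict sign) of the full-line lattice sum, together with the exact cancellation of the horizontal component. The finite-band bounds from Theorem~\ref{th:shearband} are not sharp enough to give a strict inequality against $\mu\gamma$.
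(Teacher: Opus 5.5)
Your proposal is correct and follows the paper's proof essentially step for step. You split $\Lambda_s$ into two kinds of points. At activated centres the stress is $\mu\bigl(\gamma-\gamma_0(\tfrac12-\tfrac{\pi^2}{24})\bigr)e_2$. At off-band points the horizontal component cancels by symmetry, and the vertical one is evaluated with $\sum_{j\in\Z}\frac{j^2-n^2}{(j^2+n^2)^2}=-\pi^2/\sinh^2(\pi n)$, which is the first identity of Lemma~\ref{le:sums}. This gives stress strictly below $\mu\gamma$ everywhere, hence $\argmin\Fcal_p=\{0\}$. Your extra details are correct and only make explicit what the paper leaves implicit: why strictness matters, positivity of the bracket, and unwinding the recursion to $\Bcal_k(p)=\emptyset$ and $\Acal_k(p)=\{p\}$.
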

\begin{proof}
Suppose without loss of generality that $x_0=0$ and $\delta=1$. Then, we compute with \eqref{eq:stressr2} that
\[
\frac{\sigma(p)(x)}{\mu}=\gamma e_2 +\gamma_0 \sum_{j \in \Z} \nabla v(x-2j e_1)-e_2\mathbbm{1}_{B_1(2j e_1)}(x).
\]
If $x =2je_1 \in \Lambda_s$ for $j \in \Z$ is an activated zone, then we find that
\[
\frac{\sigma(p)(x)}{\mu} = e_2 \left(\gamma-\gamma_0 \left(\frac{1}{2}-\frac{\pi^2}{24}\right)\right),
\]
and thus
\[
|\sigma(p)(x)| < \gamma \mu = \sigma_{c}+\frac{\gamma_0\mu}{4},
\]
meaning that \eqref{eq:minimizerschar} is not satisfied. On the other hand, if $x=(2m,2n) \in \Lambda_s$ with $n \not =0$, then we also find that the first component of the stress vanishes due to symmetry of the plastic band, cf.~\eqref{eq:eshelbystress}. Thus, we find using Lemma~\ref{le:sums} that
\[
\frac{\sigma(p)(x)}{\mu} = e_2 \left(\gamma -\gamma_0\frac{\pi^2}{8\sinh(\pi n)^2}\right).
\]
Again, we have that
\[
|\sigma(p)(x)| < \gamma \mu = \sigma_{c}+\frac{\gamma_0\mu}{4}.
\]
Hence, we find that the stress is everywhere below the critical threshold $\sigma_c+\gamma_0\mu/4$, which implies that zero is the only admissible plastic increment. This finishes the proof.
\end{proof}

We now proceed with the result for the triangular lattice, which states that if $\ell \in \N$ plastic bands are fully formed, then the next plastic events activate adjacent to the bands also aligned in a partial band.
\begin{theorem}
\label{thm:thickening_bands}
Let $\Omega =\R^2$, and the boundary condition at infinity given by $g(x)=\gamma x_2$ with
\[
\gamma = \frac{\sigma_{c}}{\mu}+\frac{\gamma_0}{4} \geq \gamma_0.
\]
Suppose the admissible increments $\Ical$ are given by \eqref{eq:increments} with $\Lambda =\Lambda_t$. Then, with plasticity given by
\[
p = \gamma_0\sum_{l=0}^{\ell-1}\sum_{j\in \Z} e_2\mathbbm{1}_{B_\delta(x_0+l\delta(1,\sqrt{3})+2j\delta e_1)} \quad \text{for $\ell \in \N$,}
\]
we have that
\[
\Acal_k(p)= \left\{p+ \gamma_0\sum_{j=0}^{k-1} e_2\mathbbm{1}_{B_\delta(x_0+l\delta(1,\sqrt{3})+2(j+m)\delta e_1)} \ \Big| \ m \in \Z, l \in \{-1,\ell\}\right\} \quad \text{for all $k \in \N$}.
\]
\end{theorem}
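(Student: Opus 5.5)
We argue by induction on $k$, following the structure of the proof of Theorem~\ref{th:shearband}. Rescale so that $\delta=1$ and translate so that $x_0=0$; the rows of the band then sit at heights $l\sqrt3$ for $l=0,\dots,\ell-1$, with horizontal offset $l$. By \eqref{eq:minimizerschar}, computing $\Acal_{k+1}(p)$ amounts to locating the lattice maxima of $|\sigma(\tilde p)|$ for each $\tilde p\in\Acal_k(p)\setminus\{p\}$, and checking that the maximal value exceeds $\mu\gamma=\sigma_c+\mu\gamma_0/4$. The stress of the full band is written as in the proof of Proposition~\ref{prop:no_fattening_quad_lattice}. By the horizontal symmetry of each infinite row, the $e_1$-component vanishes, so
\[
\frac{\sigma(p)(x)}{\mu}=e_2\Bigl(\gamma+\gamma_0\sum_{l=0}^{\ell-1} S\bigl(x-l(1,\sqrt3)\bigr)\Bigr)
\]
outside activated zones, where $S(x)=\sum_{j\in\Z}\partial_2 v(x-2je_1)$ is the stress of one infinite row. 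We evaluate $S$ in closed form with the lattice sums of Lemma~\ref{le:sums}. At height $n\sqrt3$ with horizontal offset $0$ or $1$ (same or shifted parity), we expect formulas of the form $-\frac{\pi^2}{8\sinh^2(\pi\sqrt3 n/2)}$ and $+\frac{\pi^2}{8\cosh^2(\pi\sqrt3 n/2)}$ respectively, the latter being the same series that appears in \eqref{eq:comp2}.

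\emph{Step 1 ($k=1$).} A lattice site at height $l\sqrt3$ has parity matching row $l'$ exactly when $l-l'$ is even. The adjacent rows $l=-1$ and $l=\ell$ are offset by one from the nearest row, so they receive the positive $\cosh^{-2}$ contribution from that row. The next rows contribute $\sinh^{-2}$ terms with a negative sign, and further rows alternate in sign with rapidly decaying magnitude. We show that the sites on rows $-1$ and $\ell$ give $|\sigma|>\mu\gamma$, with equal value for all of them by translation invariance. Every site on a row with $l<-1$ or $l>\ell$ gives a strictly smaller value, which follows from the monotone decay in distance of an alternating, exponentially decaying series. Sites inside the band either carry the self-stress $-\gamma_0/2$, which yields a value below $\mu\gamma$ as in Proposition~\ref{prop:no_fattening_quad_lattice}, or are otherwise dominated. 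This gives $\Acal_1(p)$.

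\emph{Step 2 (induction).} Take $\tilde p=p+\gamma_0\sum_{j=0}^{k-1}e_2\mathbbm 1_{B_1(y+2je_1)}$ with $y$ on row $\ell$; the case of row $-1$ follows by symmetry. Then $\sigma(\tilde p)=\sigma(p)+\mu\gamma_0\sum_j(\nabla v(\cdot-y-2je_1)-e_2\mathbbm1)$, which is the band stress plus the partial-row stress already analyzed in Theorem~\ref{th:shearband}. Along row $\ell$ the band stress is constant and maximal, so the argument of Case 1/Case 2 of Theorem~\ref{th:shearband} applies verbatim. With the uniform gap $c\mu\gamma_0$ of Remark~\ref{rem:unifestimate1}, the endpoints $y-2e_1$ and $y+2ke_1$ beat all other sites of row $\ell$. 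On row $-1$ the band value is the same, but the partial row is at distance at least $(\ell+1)\sqrt3$. We must show its contribution there is smaller than the gain $\frac{\gamma_0}{8}\sum_{j=1}^k j^{-2}\ge\gamma_0/8$ obtained at the endpoints. Off the two adjacent rows, the band-stress deficit of Step 1 is of order $\gamma_0$, which dominates the bounded perturbation from the partial row estimated via \eqref{eq:comp1}--\eqref{eq:comp2}. Since $\gamma_0\le\gamma$, the nonlinear modulus is handled as in \eqref{eq:convexest}. Finally, the endpoint value exceeds $\mu\gamma$, so $0$ is not a minimizer, and the induction closes.

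\emph{Main obstacle.} The hardest step is the quantitative comparison in Step 2. We need the sites of the partial row's own lattice line, and the sites of row $\ell+1$ directly above it, to stay uniformly below the endpoints. At row $\ell+1$ above the partial row, the partial row itself contributes a positive $e_2$ stress of order $\gamma_0\cosh^{-2}$-type, compounding the band's negative-but-small contribution there. We plan to bound this site by combining the explicit band value at row $\ell+1$ (a negative $\sinh^{-2}$ term from row $\ell-1$ plus the alternating tail) with the upper bound \eqref{eq:comp2} for the partial row. We then verify numerically that the resulting constant is below $\gamma_0/8$, using the convexity-in-$t$ device of \eqref{eq:convexest} to reduce the check to the endpoints $t\in\{0,1\}$.
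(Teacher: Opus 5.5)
Your proposal is correct and follows essentially the same route as the paper. Both argue by induction on $k$, superpose the purely vertical full-band stress (written via the row sums $f(n)$ of Lemma~\ref{le:sums2} and maximized on rows $-1,\ell$ by \eqref{eq:monotone}) with the partial-row stress of Theorem~\ref{th:shearband}, and use the convexity device \eqref{eq:convexest} for the remaining sites.

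The paper is a bit more economical: it treats all non-activated sites at once by combining the uniform gap of Remark~\ref{rem:unifestimate1} with the maximality of the band term on rows $-1$ and $\ell$ (and the nonnegativity of the total $e_2$-component). This makes clear that rows $l\le-2$ and $l\ge\ell+1$, including your ``main obstacle'', are controlled by the endpoint gain $\ge\gamma_0/8$ rather than by the band deficit. On those rows the deficit is only about $f(1)\gamma_0$, so it does not by itself beat the bound from \eqref{eq:comp1}.
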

\begin{proof}
As before, we assume without loss of generality that $\delta=1$ and $x_0=0$. Then, assuming  $x=l(1,\sqrt{3})+2m e_1 \in \Lambda_t$, we compute with \eqref{eq:stressr2} that
\[
\frac{\sigma(p)(x)}{\mu}=\gamma e_2 +\gamma_0 \sum_{n=0}^{\ell-1}\sum_{j \in \Z} \nabla v(x-n(1,\sqrt{3})-2j e_1)-e_2\mathbbm{1}_{B_1(n(1,\sqrt{3})+2j e_1)}(x).
\]
From now on we use the notation 
\[
f(n):= \frac{1}{2}\sum_{j \in \Z} \frac{(n-2j)^2-3n^2}{((n-2j)^2+3n^2)^2}
\]
for $n\in\N$.
If  $l \in \{0,\ldots, \ell-1\}$, i.e., $x$ is the center of an activated zone, we find with this notation  that 
\[
\frac{\sigma(p)(x)}{\mu} = e_2 \left( \gamma -\gamma_0\left(\frac{1}{2} -\frac{\pi^2}{24}-\sum_{n=1}^{\ell-l-1}f(n)-\sum_{n=1}^{l} f(n) \right)\right)\,,
\]
where we have used that again the stress cancels in the first component due to symmetry. Now we use Lemma~\ref{le:sums2} from the appendix to simplify this expression: Combining this lemma with the fact that
$f(n)$ alternates between positive and negative values that decrease in absolute value, we infer that
\[
\frac{|\sigma(p)(x)|}{\mu}=\gamma -\gamma_0\left(\frac{1}{2} -\frac{\pi^2}{24}-\sum_{n=1}^{\ell-l-1}f(n)-\sum_{n=1}^{l} f(n)\right) \leq \gamma - \gamma_0 \left(\frac{1}{2}-\frac{\pi^2}{24}-2f(1)\right) < \gamma.
\]
On the other hand, if $x=l(1,\sqrt{3})+2me_1 \in \Lambda_t$ with $l \not \in \{0,\ldots \ell-1\}$ and set $l_0:=\min\{|l-(\ell-1)|,|l|\} \in \N$, then we find
\[
\frac{\sigma(p)(x)}{\mu} = e_2 \left( \gamma +\gamma_0 \sum_{n=0}^{\ell-1} f(n+l_0)\right).
\]
By equation \eqref{eq:monotone} of Lemma \ref{le:sums2}, we deduce that this quantity is maximized when $l_0=1$, which yields
\[
\frac{|\sigma(p)(x)|}{\mu}=e_2 \left( \gamma +\gamma_0 \sum_{n=1}^{\ell} f(n)\right) > \gamma.
\]
Hence, we deduce that
\[
\max_{x \in \Lambda_t} |\sigma(p)(x)| > \sigma_c + \frac{\gamma_0\mu}{4} \quad \text{with} \quad \argmax_{x \in \Lambda_t}|\sigma(p)(x)| = \left\{l(1,\sqrt{3})+2me_1 \,|\, m \in \Z, \ l \in \{-1,\ell\}\right\}.
\]
By \eqref{eq:minimizerschar} we obtain that
\[
\Acal_1(p) = \left\{p + \gamma_0e_2\mathbbm{1}_{B_1(l(1,\sqrt{3})+2me_1)}\,|\, m \in \Z, \ l \in \{-1,\ell\}\right\}.
\]

Next, we proceed by induction and assume that for some $k \in \N$
\[
\Acal_k(p) = \left\{p+ \gamma_0\sum_{j=0}^{k-1} e_2\mathbbm{1}_{B_1(x_0+l(1,\sqrt{3})+2(j+m) e_1)} \ \Big| \ m \in \Z, l \in \{-1,\ell\}\right\}.
\]
To determine $\Acal_{k+1}(p)$, we take $p' \in \Acal_{k}(p)$. By translating and reflecting, we may assume that
\[
p'=p+\gamma_0\sum_{j=0}^{k-1} e_2\mathbbm{1}_{B_1(x_0+\ell(1,\sqrt{3})+2j e_1)}.
\]
Using \eqref{eq:stressr2}, we obtain
\begin{align*}
\frac{\sigma(p')(x)}{\mu}&=\gamma e_2 +\gamma_0 \sum_{l=0}^{\ell-1}\sum_{j \in \Z} \nabla v(x-l(1,\sqrt{3})-2j e_1)-e_2\mathbbm{1}_{B_1(l(1,\sqrt{3})+2j e_1)}(x)\\
&\quad+\gamma_0\sum_{j=0}^{k-1}\nabla v(x-\ell(1,\sqrt{3})-2je_1)-e_2\mathbbm{1}_{B_1(\ell(1,\sqrt{3})+2je_1)}(x).
\end{align*}
If $x \in \Lambda_t$ is the center of a non-activated zone, we may use that the full bands do not contribute to the first component of the stress to find 
\[
\frac{\sigma(p')(x)}{\mu}=\gamma e_2  +\gamma_0 \sum_{j=0}^{k-1} \nabla v(x-\ell(1,\sqrt{3})-2j e_1) +\gamma_0 e_2\sum_{n=0}^{\ell-1} f(n+l_0)
\]
with $l_0=\min\{|l-(\ell-1)|,|l|\}$. The first two terms are the same as in the case without bands, while the last term is due to these fully formed bands. By \eqref{eq:secondcomponent} and \eqref{eq:monotone} together with $\gamma \geq \gamma_0$, it is readily seen that the second component of the stress never becomes negative. Therefore, by using that the first two terms are maximized at $x_{+}:=\ell(1,\sqrt{3})+2ke_1$ and $x_{-}:=\ell(1,\sqrt{3})-2e_1$ by Theorem~\ref{th:shearband}, and the last term is as well due to \eqref{eq:monotone}, we infer that among all non-activated zones, the maximal stress is
\[
|\sigma(p')(x_{\pm})| = \mu \left(\gamma + \gamma_0\left(\frac{1}{8}\sum_{j=1}^{k}\frac{1}{j^2}+\sum_{n=1}^{\ell} f(n)\right)\right) > \mu\gamma = \sigma_c + \frac{\gamma_0\mu}{4};
\]
note, in particular, that the stress exceeds the threshold for activation here. It remains to compare this with the stress at the already activated zones. If $x = \ell(1,\sqrt{3})+2me_1 \in \Lambda_t$ with $m \in \{0,\ldots,k-1\}$ is part of the partial band, then we can just argue as in Theorem~\ref{th:shearband}; indeed, the only addition is a positive contribution in the second component of the stress of the form
\[
\gamma_0\sum_{n=1}^{\ell} f(n),
\]
but since this is also there at $x_{\pm}$ the same estimates still hold. If $x=l(1,\sqrt{3})+2m e_1 \in \Lambda_t$ with $l \in \{0,\ldots \ell-1\}$ is part of a full shear band, we find
\[
\frac{\sigma(p')(x)}{\mu} = e_2 \left( \gamma -\gamma_0\left(\frac{1}{2} -\frac{\pi^2}{24}-\sum_{n=1}^{\ell-l-1}f(n)-\sum_{n=1}^{l-1} f(n) \right)\right)+\gamma_0 \sum_{j=0}^{k-1} \nabla v(x-\ell(1,\sqrt{3})-2j e_1).
\]
For similar reasons, the second component of the stress does not become negative. Using \eqref{eq:comp1} and \eqref{eq:comp2} to estimate the gradients of $v$, and \eqref{eq:monotone} again, we find
\begin{align*}
\frac{|\sigma(p')(x)|}{\mu} &\leq \left(\left(\gamma-\gamma_0\left(\frac{1}{2} -\frac{\pi^2}{24}-3f(1)\right)\right)^2+\left(\frac{3\sqrt{3}\pi^2}{128}\right)^2\gamma_0^2\right)^{1/2} \\
&\leq \left(\gamma^2+\left(\frac{3\sqrt{3}\pi^2}{128}\right)^2\gamma_0^2\right)^{1/2} < \gamma + \frac{1}{8}\gamma_0 \leq \frac{|\sigma(p')(x_{\pm})|}{\mu},
\end{align*}
see~\eqref{eq:convexest}. This shows that
\[
\argmax_{x \in \Lambda_t} |\sigma(p')(x)| = \{x_{+},x_{-}\} \quad \text{with} \quad \max_{x \in \Lambda_t} |\sigma(p')(x)| = |\sigma(p')(x_{\pm})|>\sigma_c + \frac{\gamma_0\mu}{4}.
\]
By \eqref{eq:minimizerschar} this finishes the proof.
\end{proof}

\section{Formation of shear bands in half space}

\label{sec:half-space}

The results in the previous section show that plastic events induce cascades that form banded structures, at least when considering settings that are well approximated by an unbounded domain. In this section we investigate the effect that a boundary may have on these results. Specifically, we consider a half space as our domain with Neumann boundary conditions on the boundary. We show that also in this case a shear band forms instantaneously, and that this shear band will initiate from the boundary. This indicates that in bounded domains it is more likely that shear bands form near the boundary, which is a good reflection of the physical situation.

The precise set-up is as follows. We consider the domain $\R^{2}_{+} = (0,\infty) \times \R$ and the square lattice
\begin{equation}\label{eq:latticeplus}
\Lambda_{+}:= \{ (2m+1,2n) \,:\, m \in \Z_{\geq 0}, n \in \Z\},
\end{equation}
where we have offset the grid to ensure all zones are contained in $\R^{2}_{+}$. We consider again the linear boundary condition
\[
g(x) = \gamma x_2
\]
at infinity, while we impose natural boundary conditions on $\{0\} \times \R$. In this setting, the solution $w_{\nu,y} \in H^1(\R^2_{+})$ of \eqref{eq:displacementupdate} is given by
\[
w_{\nu,y}(x) = v_\nu(x-y)+v_{\nu}(x_1+y_1,x_2-y_2),
\]
with $v_\nu$ as in \eqref{eq:displcamentr2}; this corresponds to the sum of two full space solutions, centered at $y$ and its reflection through the boundary $(-y_1,y_2)$. From \eqref{eq:quadraticcorr}, we infer that
\begin{equation*}\label{eq:fprplus}
\Fcal_p(q_{\nu,y})=-\gamma_0|B_\delta(0)|\left( \sigma(p)(y) \cdot \nu -\sigma_c - \gamma_0\mu\left(\frac{1}{4}+\frac{\delta^2}{16y_1^2}\left(\nu_1^2-\nu_2^2\right)\right) \right).
\end{equation*}
This shows that $q_{\nu,y} \in \argmin_{\Pcal} \Fcal_p$ if and only if
\begin{equation}\label{eq:mincharplus}
\sigma(p)(y) \cdot \nu + \gamma_0 \frac{\mu \delta^2}{16y_1^2}\left(-\nu_1^2+\nu_2^2\right) = \max_{(x,\theta) \in \delta\Lambda_{+}\times\S^1} \sigma(p)(x) \cdot \theta+ \gamma_0 \frac{\mu \delta^2}{16x_1^2}\left(-\theta_1^2+\theta_2^2\right) \geq \sigma_c + \frac{\mu \gamma_0}{4}.
\end{equation}
We have the following result.
\begin{theorem}\label{th:shearneumann}
Let $\Omega =\R^2_{+}$, and the boundary condition at infinity given by $g(x)=\gamma x_2$ with
\[
\gamma = \frac{\sigma_{c}}{\mu}+\frac{3\gamma_0}{16} \geq \gamma_0.
\]
Suppose the admissible increments $\Ical$ are given by \eqref{eq:increments} with $\Lambda =\Lambda_{+}$.  Then, it holds that
\[
\Acal_{k}(0) = \{0\} \cup \left\{ \gamma_0\sum_{j=0}^{k-1} e_2\mathbbm{1}_{B_\delta(\delta(1,2n)+2j\delta e_1)} \ \Big| \ \text{$n \in \Z$}\right\} \quad \text{for any $k \in \N$}.
\]
\end{theorem}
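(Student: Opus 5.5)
We follow the proof of Theorem~\ref{th:shearband}: rescale to $\delta=1$ and argue by induction on $k$, the activation criterion now being \eqref{eq:mincharplus}. For each lattice point $x\in\Lambda_+$ we maximize over $\theta\in\S^1$ the \emph{effective driving force}
\[
\Phi_p(x,\theta):=\sigma(p)(x)\cdot\theta+\frac{\gamma_0\mu}{16x_1^2}\left(\theta_2^2-\theta_1^2\right).
\]
For $p=0$ the stress is $\mu\gamma e_2$ everywhere, so $\Phi_0(x,\theta)=\mu\gamma\theta_2+\frac{\gamma_0\mu}{16x_1^2}(\theta_2^2-\theta_1^2)$. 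This is maximized at $\theta=e_2$ (both terms are maximal there), with value $\mu\gamma+\gamma_0\mu/(16x_1^2)$. The value is strictly largest when $x_1=1$, where it equals $\mu\gamma+\gamma_0\mu/16=\sigma_c+\mu\gamma_0/4$, exactly the threshold. This explains the choice $\gamma=\sigma_c/\mu+3\gamma_0/16$. It gives $\Acal_1(0)=\{0\}\cup\{\gamma_0e_2\mathbbm 1_{B_1((1,2n))}:n\in\Z\}$, and the boundary column is selected.

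For the induction step, by vertical translation invariance we take $p=\gamma_0\sum_{j=0}^{k-1}e_2\mathbbm 1_{B_1((2j+1,0))}$. By the reflection formula for $w_{\nu,y}$, the stress is
\[
\frac{\sigma(p)(x)}{\mu}=\gamma e_2+\gamma_0\sum_{j=-k}^{k-1}\nabla v(x-(2j+1)e_1)-\gamma_0\sum_{j=0}^{k-1}e_2\mathbbm 1_{B_1((2j+1)e_1)}(x).
\]
So, up to the indicator terms, it is the whole-space stress of a symmetric band of $2k$ zones centred at $\pm1,\pm3,\dots,\pm(2k-1)$. The candidate for the next activation is $x_+=(2k+1,0)$. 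There the reflected band sits at the end of a $2k$-chain, and by \eqref{eq:3} we get $\sigma(p)(x_+)/\mu=(\gamma+\frac{\gamma_0}{8}\sum_{j=1}^{2k}j^{-2})e_2$. Hence
\[
\max_\theta\Phi_p(x_+,\theta)\geq\mu\gamma+\mu\gamma_0\Bigl(\tfrac18+\tfrac{1}{16(2k+1)^2}\Bigr)>\sigma_c+\tfrac{\mu\gamma_0}{4}.
\]
The threshold is therefore exceeded. It remains to show that $x_+$ is the unique maximizer of $\max_\theta\Phi_p(x,\theta)$ over $\Lambda_+$.

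The comparison is done for the reflected configuration viewed in $\R^2$, a $2k$-chain on the shifted square lattice $(2\Z+1)\times 2\Z$. The claim is that Remark~\ref{rem:unifestimate1}, or rather its proof, goes through verbatim for this shifted lattice. The key inputs carry over: the monotonicity when moving toward the band row, the bounds \eqref{eq:comp1estimate}, \eqref{eq:lowercomp2estimate} and \eqref{eq:uppercomp2estimate} via Lemma~\ref{lem:sumleq0}, and the activated-zone estimate. These give $|\sigma(p)(x_+)|-|\sigma(p)(x)|\geq c\mu\gamma_0$ for all $x\in\Lambda_+\setminus\{x_+\}$. Here the other end $(-2k-1,0)$ lies outside $\R^2_+$, which removes the symmetric competitor.

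The boundary correction is bounded by $\max_\theta\Phi_p(x,\theta)\leq|\sigma(p)(x)|+\gamma_0\mu/(16x_1^2)$. The main obstacle is the column $x_1=1$, where the correction is largest ($\gamma_0\mu/16$) while $x_+$ only gains about $\gamma_0\mu/(16(2k+1)^2)$. We therefore need an explicit margin $c>1/16$ for points with $x_1=1$, $n\neq0$. We handle this directly: the sum has $2k$ terms whose first components cancel by symmetry, so $\sigma(p)(x)$ is parallel to $e_2$ with second component $\leq\gamma$ by Lemma~\ref{lem:sumleq0}. Then
\[
\max_\theta\Phi_p(x,\theta)\leq\mu\gamma+\frac{\gamma_0\mu}{16}<\mu\gamma+\frac{\gamma_0\mu}{8}\leq\max_\theta\Phi_p(x_+,\theta).
\]
At the activated zones $x=(1,0),\dots,(2k-1,0)$, the $-\gamma_0/2$ indicator term gives a margin of roughly $\gamma_0(\frac12-\frac{\pi^2}{24})>\gamma_0/16$. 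For $x_1\geq3$ the correction is at most $\gamma_0\mu/144$, and the generic margin from \eqref{eq:convexest}-type convexity estimates should suffice. Whether it does is the quantitative point to check, possibly with sharpened constants. Finally we verify that at $x_+$ the maximizing direction is $e_2$, since the stress there is parallel to $e_2$. This yields $\argmin\Fcal_p=\{\gamma_0e_2\mathbbm 1_{B_1(x_+)}\}$ and completes the induction.
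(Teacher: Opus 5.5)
The overall route is the paper's: reflect to a $2k$-chain at $\pm1,\dots,\pm(2k-1)$, evaluate $\sigma(p)(x_+)/\mu=(\gamma+\frac{\gamma_0}{8}\sum_{j=1}^{2k}j^{-2})e_2$, and show that $x_+$ beats every other site even after the boundary term $\gamma_0\mu/(16x_1^2)$ is added. The base case, activated zones, threshold check and direction at $x_+$ all agree with the paper. However, the step you identify as the main obstacle is argued incorrectly.

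\textbf{The symmetry claim at $x_1=1$ is false.} For $x=(1,2n)$ the first components do \emph{not} cancel. The reflected chain is symmetric about $x_1=0$, not about $x_1=1$. Seen from $x$, the horizontal offsets $x_1-(2j+1)$, $j=-k,\dots,k-1$, are $-(2k-2),\dots,0,\dots,2k$. The image zone at $-(2k-1)e_1$ is therefore unpaired and contributes $-\gamma_0kn/(4(k^2+n^2)^2)$ to the first component of $\sigma(p)(x)/\mu$. For $k=n=1$ one gets
\[
\sigma(p)(1,2)/\mu=(-\gamma_0/16,\ \gamma-\gamma_0/8).
\]
So ``$\sigma(p)(x)$ is parallel to $e_2$ with second component $\le\gamma$'' fails, and your bound $\max_\theta\Phi_p(x,\theta)\le\mu\gamma+\gamma_0\mu/16$ is unproved as written.

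\textbf{The case $x_1\ge 3$ is left open.} You leave unchecked whether the margin suffices there. Citing Remark~\ref{rem:unifestimate1} cannot settle this, because its constant $c$ is not explicit.

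\textbf{The missing ingredient is an explicit bound on the modulus.} The paper uses one bound for all in-chain columns at once. For $x=(2m+1,2n)$ with $m\in\{0,\dots,k-1\}$ and $n\ne0$, the estimates \eqref{eq:comp1estimate}, \eqref{eq:lowercomp2estimate} and \eqref{eq:uppercomp2estimate} apply verbatim to the $2k$-chain. They give
\[
|\sigma(p)(x)|\le\mu\gamma\sqrt{1+a_s^2t^2},\qquad a_s=\sqrt3\pi^2/128,\quad t=\gamma_0/\gamma\le1,
\]
where the nonzero first component is absorbed into $a_s$. Combine this with the crude bound $\gamma_0\mu/(16x_1^2)\le\gamma_0\mu/16$ and with $\max_\theta\Phi_p(x_+,\theta)\ge\mu(\gamma+\gamma_0/8)$. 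This yields
\[
\max_\theta\Phi_p(x,\theta)-\max_\theta\Phi_p(x_+,\theta)\le\mu\gamma_0\,\frac{\sqrt{1+a_s^2t^2}-1-\frac{t}{16}}{t}\le\mu\gamma_0\Bigl(\frac{a_s^2}{2}-\frac1{16}\Bigr)<0 .
\]
This covers the column $x_1=1$ and all columns $3\le x_1\le 2k-1$ simultaneously, so no separate symmetry argument is needed.

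\textbf{Columns $x_1\ge 2k+1$ need no margin.} There the boundary term is at most its value at $x_+$. Monotonicity in $|x_2|$ and along the row then gives strict inequality, as in the paper.
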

\begin{proof}
As before, we assume that $\delta=1$. Given the initial plasticity $p_0=0$, we find that the stress $\sigma(p_0) \equiv \mu \gamma e_2$ is uniform. From this, it is clear that \eqref{eq:mincharplus} is maximized at $y=(1,2n)$ for $n \in \Z$ and $\nu =e_2$, leading to
\[
 \sigma(p)(y) \cdot \nu + \gamma_0 \frac{\mu}{16y_1^2}\left(-\nu_1^2+\nu_2^2\right) = \mu\left(\gamma + \frac{\gamma_0}{16}\right) = \sigma_c + \frac{\mu \gamma_0}{4}.
\]
Due to the equality, zero is also an admissible increment, and we find
\[
\Acal_1(0)=\{0\} \cup \left\{ \gamma_0 e_2 \mathbbm{1}_{B_1((1,2n))} \,|\, n \in \Z \right\}.
\]
We proceed by induction and suppose that
\[
\Acal_k(0) = \{0\} \cup \left\{ \gamma_0\sum_{j=0}^{k-1} e_2\mathbbm{1}_{B_1((1,2n)+2j e_1)} \ \Big| \ \text{$n \in \Z$}\right\}
\]
for some $k \in \N$. Then, we take $p \in \Acal_{k}(0) \setminus \{0\}$, which, by translating, may be assumed to be given by
\[
p = \gamma_0\sum_{j=0}^{k-1} e_2\mathbbm{1}_{B_1((2j+1)e_1)}.
\]
We then find that for $x\in\Omega$, the stress is given by
  \begin{equation}\label{eq:7}
  \begin{split}
\frac{\sigma(p)(x)}{\mu}&=\gamma e_2 + \gamma_0 \sum_{j=0}^{k-1} \nabla v (x-(2j+1)e_1) + \nabla v(x+(2j+1)e_1)-\mathbbm{1}_{B_1((2j+1)e_1)}(x)\\
&=\gamma e_2 + \gamma_0 \sum_{j=-k}^{k-1} \nabla v (x-(2j+1)e_1) -\mathbbm{1}_{B_1((2j+1)e_1)}(x)\,.
\end{split}
\end{equation}
Our goal is to show that
\begin{equation}\label{eq:goalargmax}
\argmax_{(x,\theta) \in \Lambda_{+}\times\S^1} \sigma(p)(x) \cdot \theta+ \gamma_0 \frac{\mu}{16x_1^2}\left(-\theta_1^2+\theta_2^2\right) = \{((2k+1)e_1,e_2)\}.
\end{equation}
If $x = (2m+1,2n) \in \Lambda_{+}$ for $m \geq k$ and $n \in \Z$, and $\nu \in \S^1$ then we find that
\begin{align*}
\sigma(p)(x)\cdot \nu + \gamma_0 \frac{\mu}{16x_1^2}\left(-\nu_1^2+\nu_2^2\right) &\leq \sigma(p)((2m+1)e_1)\cdot e_2+\gamma_0 \frac{\mu}{16(2m+1)^2} \\
&\leq \sigma(p)((2k+1)e_1)\cdot e_2+\gamma_0 \frac{\mu}{16(2k+1)^2},
\end{align*}
where in the first inequality we used that when moving from $x$ to $(2m+1)e_1$ all the gradients of $v$ become larger in length and align with the $e_2$ direction, cf.~\eqref{eq:eshelbystress}, while for the second inequality we used that both terms become larger. We note  that at least one of the inequalities is strict if $x \not = (2k+1)e_1$ or $\nu \not = e_2$; more precisely there exists $c>0$ such that
\[
\frac{|\sigma(p)((2k+1)e_1)|}{\mu}+\frac{\gamma_0}{16(2k+1)^2}-\frac{|\sigma(p)(x)|}{\mu}-\frac{\gamma_0}{16|x_1|^2}\geq c\gamma_0
\]
for all $x=(2m+1,2n)$ that satisfy either $m>k$ or $m=k$ and $n\neq 0$. 

\medskip

Next, suppose that $x = (2m+1,2n) \in \Lambda_{+}$ with $m \in \{0,\ldots,k-1\}$ and $n \not =0$. We want to estimate the modulus of the right hand side in \eqref{eq:7} for such an $x$. This may be done as in the estimates for the analogous term in the proof of Theorem~\ref{th:shearband}, see the estimates \eqref{eq:comp1estimate}, \eqref{eq:lowercomp2estimate}, and \eqref{eq:uppercomp2estimate}, which go through unchanged by the fact that the range of summation has changed from $\sum_{j=0}^{k-1}$ there to $\sum_{j=-k}^{k-1}$ here.
In this manner we obtain 
\[
\frac{|\sigma(p)(x)|}{\mu}\leq \gamma \sqrt{1+a_s^2t^2}
\]
with $a_s:=\frac{\sqrt{3}\pi^2}{128}$, $t=\frac{\gamma_0}{\gamma}\leq 1$.
Thus 
\[
  \begin{split}
|\sigma(p)(x)|+\frac{\mu\gamma_0}{16|x_1|^2}-|\sigma(p)((2k+1)e_1)|-\frac{\mu\gamma_0}{16(2k+1)^2}&\leq \mu\gamma_0\left(\frac{\sqrt{1+a_s^2 t^2}-1-\frac{1}{8}t}{t}+\frac{1}{16}\right)\\
&=\mu\gamma_0\frac{\sqrt{1+a_s^2 t^2}-1-\frac{1}{16}t}{t}\,.
\end{split}
\]
 The function on the right extends continuously to $t=0$ with value
 $-\mu\gamma_0/16$ and is strictly negative  on $[0,1]$ (since the numerator is convex, and negative in $\{0,1\}$). Therefore
we have shown that for all $x=(2m+1,2n)$ with $m\in\{0,\dots,k-1\}$, $n\neq 0$,
\[
\frac{|\sigma(p)((2k+1)e_1)|}{\mu}+\frac{\gamma_0}{16(2k+1)^2}-\frac{|\sigma(p)(x)|}{\mu}-\frac{\gamma_0}{16|x_1|^2}{\mu}\geq c\gamma_0\,,
\]
for $c>0$ chosen small enough.

Finally, if $x = (2m+1,0) \in \Lambda_{+}$ with $m \in \{0,\ldots,k-1\}$ is an activated zone, we find as in Theorem~\ref{th:shearband} that
\begin{align*}
\sigma(p)(x) \cdot \nu+\gamma_0 \frac{\mu}{16x_1^2}\left(-\nu_1^2+\nu_2^2\right) \leq \mu \left(\gamma-\frac{\gamma_0}{2}+\frac{\gamma_0\pi^2}{24}\right)  +\gamma_0 \frac{\mu}{16} < \mu\left( \gamma + \frac{\gamma_0}{8}\right).
\end{align*}
This proves \eqref{eq:goalargmax}, and since we further have
\[
\sigma(p)((2k+1)e_1)\cdot e_2+\gamma_0 \frac{\mu}{16(2k+1)^2} \geq \mu\left( \gamma + \frac{\gamma_0}{8}\right) > \frac{\sigma_{c}}{\mu}+\frac{\gamma_0}{4},
\]
we find by \eqref{eq:mincharplus} that
\[
\argmin_{\Pcal} \Fcal_p = \{\gamma_0 e_2\mathbbm{1}_{B_1((2k+1)e_1)}\},
\]
as desired.
\end{proof}

\begin{remark}
\label{rem:unifestimate2}
  We note for later reference that once more  we have proven slightly more than required in the theorem statement: Namely, there exists $c>0$ such that for 
\[
p = \gamma_0\sum_{j=0}^{k-1} e_2\mathbbm{1}_{B_1((2j+1)e_1)}
\]
and any $x\in \Lambda_+\setminus\{(2k+1,0)\}$, we have that
\[
\frac{|\sigma(p)((2k+1)e_1)|}{\mu}+\frac{\gamma_0}{16(2k+1)^2}-\frac{|\sigma(p)(x)|}{\mu}-\frac{\gamma_0}{16|x_1|^2}{\mu}\geq c\gamma_0\,.
\]
\end{remark}
\section{Quantitative estimates for large bounded domains}

\label{sec:bounded_domains}

In this section, we will show that the formation of shear bands also occurs on bounded domains, with an explicit guarantee on the scaling between the length of the shear band and size of the domain. While there are no explicit formulas available in this case, we are able to use quantitative estimates on the difference with the unbounded domain solutions to obtain these statements. The main result is that on bounded domains the length of the shear band will be at least proportional to the ratio of the size of the domain and the size of the zones.

\subsection{Dirichlet boundary conditions}

\label{sec:dirichlet}
We detail the set-up. Consider a bounded Lipschitz domain $\Omega \subset \R^2$ and assume that we have Dirichlet boundary conditions on the entire boundary, so $\Gamma=\partial\O$ with boundary condition $g(x)=\gamma x_2$. The possible activation sites for the plastic zones are the square lattice or hexagonal lattice $\Lambda \in \{\Lambda_s,\Lambda_t\}$ with the zones entirely contained in $\O$ as in \eqref{eq:increments}.

In this case the solution to \eqref{eq:displacementupdate} is not known, but we can write it as $w_{\nu,y}(x)=v_\nu(x-y)+\tilde{v}_{\nu,y}(x)$ where $v_\nu$ is as in \eqref{eq:displcamentr2} and $\tilde{v}_{\nu,y}$ is the harmonic function solving
\begin{equation*}
\begin{cases}
\Delta \tilde{v}_{\nu,y} = 0 &\text{in $\O$,}\\
\tilde{v}_{\nu,y}(x) = -v_{\nu}(x-y) &\text{on $\partial\O$}.
\end{cases}
\end{equation*}
In fact, since $v_\nu(x)=\nu \cdot V(x)$ with
\[
V(x):=\begin{cases}
\dfrac{1}{2} x &\text{for $x \in B_\delta(0)$},\\[2ex]
\dfrac{\delta^2}{2} \dfrac{x}{|x|^2} &\text{for $x \in B_\delta(0)^c$},
\end{cases}
\]
we also find that $\tilde{v}_{\nu,y}(x)=\nu \cdot V_y(x)$, where $V_y \in H^1(\Omega;\R^2)$ solves
\begin{equation}\label{eq:diricorr}
\begin{cases}
\Delta V_{y} = 0 &\text{in $\O$,}\\
V_{y}(x) = -V(x-y) &\text{on $\partial\O$}.
\end{cases}
\end{equation}
Additionally, the term $\Jcal_0(\mathbbm{1}_{B_\delta(y)}\nu)$ in the change in energy formula \eqref{eq:quadraticcorr} reduces to 
\begin{equation}
\begin{split}\label{eq:Iform}
\Jcal_0(\mathbbm{1}_{B_\delta(y)}\nu) = \frac{\mu}{4}\abs{B_\delta(0)} - \frac{\mu}{2}\abs{B_\delta(0)} \nu \cdot \nabla V_{y}(y)\nu
\end{split}
\end{equation}
We obtain the following estimates on $V_{y}$.
\begin{lemma}
For any $\xi\in\S^1$, it holds that
\begin{equation}\label{eq:corrbound}
\abs{\xi\cdot \nabla V_{y}(x)} \leq \frac{\delta^2}{\dist(x,\partial\O)\dist(y,\partial \Omega)} \quad \text{for all $x \in \O$},
\end{equation}
and, therefore,
\begin{equation}\label{eq:enbound}
\frac{\mu}{4}\abs{B_\delta(0)} \leq \Jcal_0(\mathbbm{1}_{B_\delta(y)}\nu) \leq \frac{\mu}{4}\abs{B_\delta(0)}+ \frac{\mu \delta^2}{2\dist(y,\partial \O)^2}\abs{B_\delta(0)}.
\end{equation}
\end{lemma}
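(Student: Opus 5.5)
The plan is to write the boundary data of each scalar component of $V_y$ as a superposition of harmonic functions with explicit interior gradient bounds, and then insert the resulting pointwise estimate into \eqref{eq:Iform}.

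\textbf{Reduction to a harmonic function.} Since the zone is admissible, $\dist(y,\partial\Omega)\geq\delta$, so for $x\in\partial\Omega$ we have $|x-y|\geq\delta$ and $V(x-y)=\frac{\delta^2}{2}\frac{x-y}{|x-y|^2}$. Fix $\xi\in\S^1$ and a unit vector $\eta$, and consider the scalar harmonic function $h:=\eta\cdot V_y$. It solves $\Delta h=0$ in $\Omega$ with boundary values
\[
h=-\frac{\delta^2}{2}\,\eta\cdot\frac{x-y}{|x-y|^2}=-\frac{\delta^2}{2}\,\partial_\eta \log|x-y|.
\]
By the maximum principle,
\[
\sup_\Omega|h|\leq \sup_{\partial\Omega}\frac{\delta^2}{2|x-y|}\leq\frac{\delta^2}{2\dist(y,\partial\Omega)}.
\]

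\textbf{Gradient estimate.} For $x\in\Omega$ set $r=\dist(x,\partial\Omega)$, so that $B_r(x)\subset\Omega$. The standard interior gradient estimate for harmonic functions, obtained from the mean value property applied to $\nabla h$ and the divergence theorem on $B_r(x)$, gives
\[
|\nabla h(x)|\leq\frac{2}{r}\sup_{B_r(x)}|h|\leq\frac{\delta^2}{\dist(x,\partial\Omega)\dist(y,\partial\Omega)}.
\]
Since $\xi\cdot\nabla V_y(x)\eta=\xi\cdot\nabla h(x)$, and we may take $\eta$ to be the maximizing direction, this yields \eqref{eq:corrbound}. The constant $2/r$ is exactly what produces the stated constant $1$, so no sharper estimate is needed. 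Here I interpret $\xi\cdot\nabla V_y(x)$ as the operator-norm action on a unit vector; with any other interpretation the same bound holds componentwise.

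\textbf{Energy bound.} At $x=y$ we have $\dist(y,\partial\Omega)$ in both factors, so
\[
|\nu\cdot\nabla V_y(y)\nu|\leq\frac{\delta^2}{\dist(y,\partial\Omega)^2}.
\]
Inserting this into \eqref{eq:Iform} gives the upper bound in \eqref{eq:enbound}.

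\textbf{Lower bound and main obstacle.} The lower bound requires $\nu\cdot\nabla V_y(y)\nu\leq0$, and this is the step I expect to be the main obstacle. It does not follow from the sup bound, so it must come from a variational argument. The minimal energy with Dirichlet data on $\partial\Omega$ is at least the whole-space minimal energy. To see this, extend the minimizer $w$ of \eqref{eq:J0} by zero outside $\Omega$; this gives an admissible competitor in $\R^2$ whose energy is the same, because the plastic strain $\mathbbm{1}_{B_\delta(y)}\nu$ is supported in $\Omega$. The whole-space minimum is attained by $v_\nu$ and equals $\frac{\mu}{4}|B_\delta(0)|$ by \eqref{eq:quadraticcorr} together with $\nabla v_\nu=\nu/2$ inside the ball. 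Hence $\Jcal_0(\mathbbm{1}_{B_\delta(y)}\nu)\geq\frac{\mu}{4}|B_\delta(0)|$.

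The one point to check in this comparison is that the zero extension lies in the class $\mathcal U_0$ on which the whole-space problem is posed. It does, since the extension is compactly supported and belongs to $H^1(\R^2)$.
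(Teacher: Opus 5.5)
Your proof is correct and follows essentially the same route as the paper. You bound the scalar components of $V_y$ by $\frac{\delta^2}{2\dist(y,\partial\Omega)}$ via the maximum principle, apply the interior gradient estimate $|\nabla h(x)|\le \frac{2}{r}\sup|h|$ (mean value property plus divergence theorem), insert the result into \eqref{eq:Iform} for the upper bound, and compare with the whole-space minimum via zero extension for the lower bound. Your version is slightly more careful than the paper's in two respects: letting $\eta$ range over $\S^1$ bounds the full operator norm of $\nabla V_y(x)$, which removes any index-convention ambiguity; and posing the whole-space problem in $\mathcal U_0$, where $v_\nu$ actually attains the minimum, avoids the fact that over $H^1(\R^2)$ this is only an infimum.
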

\begin{proof}
Using harmonicity of $\nabla V_y$ on the ball $B_r(x)$ with $r=\dist(x,\partial\O)$, we have that
\[
\nabla V_y(x)=\frac{1}{\pi r^2}\int_{ B_r(x)}\nabla V_y(z)\d z=\frac{2}{  r}\fint_{\partial B_r(x)}V_y(s)\otimes n(s)\d \mathcal H^1(s)
\]
and hence (using the maximum principle for  $\xi\cdot V_y$) 
\[
\abs{\xi\cdot \nabla V_{y}(x)} \leq \frac{2}{\dist(x,\partial \O)} \norm{V_{y}}_{L^\infty(\partial\O)}\,.
\]
Since $V_{y}(x)=-V(x-y)$ on $\partial\O$, we find that
\[
\norm{V_{y}}_{L^\infty(\partial\O)} = \max_{x \in \partial \O} \abs{V(x-y)} \leq \max_{x \in \partial \O}\frac{\delta^2}{2} \frac{1}{\abs{x-y}} = \frac{\delta^2}{2}\dist(y,\partial\O)^{-1},
\]
which proves \eqref{eq:corrbound}. To show \eqref{eq:enbound}, we first observe that clearly
\[
\Jcal_0(\mathbbm{1}_{B_\delta(y)}\nu) \geq \min \left\{ \int_{\R^2}\frac{\mu}{2}\abs{\nabla w - \nu\mathbbm{1}_{B_{\delta}(y)}}^2\dd x \,:\, w \in H^1(\R^2)\right\} = \frac{\mu}{4}\abs{B_\delta(0)}.
\]
On the other hand, applying \eqref{eq:corrbound} in \eqref{eq:Iform} immediately yields
\[
\Jcal_0(\mathbbm{1}_{B_\delta(y)}\nu) \leq \frac{\mu}{4}\abs{B_\delta(0)}+ \frac{\mu \delta^2}{{2}\dist(y,\partial \O)^2}\abs{B_\delta(0)}
\]
as desired.
\end{proof}

We now have the following result, which states that a partial shear band forms whose length is proportional to the size of the domain divided by $\delta$.

\begin{theorem}\label{th:partialshearband}
Let $\Omega \subset \R^2$ be a bounded Lipschitz domain with $0 \in \Omega$ and such that 
\begin{equation}\label{eq:symmetry}
\Omega = \{x \in \R^2 \,|\, (x_1,-x_2) \in \Omega\}.
\end{equation}
Suppose the increments $\Ical$ are given by \eqref{eq:increments} with $\Lambda \in \{\Lambda_s,\Lambda_t\}$ and consider the initial plasticity $p=\gamma_0 e_2 \mathbbm{1}_{B_\delta(0)}$. If the boundary condition is given by $g(x)=\gamma x_2$ with
\begin{equation}\label{eq:stabilityinit}
\gamma = \frac{\sigma_c}{\mu}+\min_{x = \pm 2\delta e_1}\gamma_0\left(\frac{\Jcal_0(e_2\mathbbm{1}_{B_\delta(x)})}{\mu|B_\delta(0)|}-\nabla w_{e_2,0}(x)\cdot e_2\right) \geq \gamma_0,
\end{equation}
then, there exists a constant $\alpha >0$ independent of $\gamma_0,\gamma,\delta,\Omega$ such that with $L:=\dist(0,\partial \Omega)/\delta$ it holds for $k \in \N$ that
\[
\{p\} \subsetneq \Acal_{k}(p) \subset \{p\} \cup \left\{\gamma_0\sum_{j=0}^{k} e_2\mathbbm{1}_{B_\delta(2(j-m)\delta e_1)} \,\Big|\, m \in \{0,\ldots,k\}\right\} \quad \text{if  $k \leq \alpha L$}.
\]
\end{theorem}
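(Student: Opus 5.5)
We rescale to $\delta=1$, so $L=\dist(0,\partial\Omega)$, and treat the bounded domain as a perturbation of the whole-space computation in Theorem~\ref{th:shearband}. The argument is an induction on $k$. The hypothesis is that every $p'\in\Acal_k(p)\setminus\{p\}$ has the form
\[
p'=\gamma_0\sum_{j=0}^{k'} e_2\mathbbm{1}_{B_1(2(j-m)e_1)}
\]
for some $k'\le k$ and $m\in\{0,\dots,k'\}$. This is a chain of $k'+1$ aligned zones containing the origin.

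For any such chain, we write the stress at a lattice site $x$ as $\sigma(p')(x)=\sigma^{\infty}(p')(x)+\mu\gamma_0\sum_j \nabla V_{y_j}(x)^T e_2$. Here $\sigma^\infty$ is the whole-space expression \eqref{eq:stressr2}, and the correction comes from the harmonic functions $V_{y_j}$ of \eqref{eq:diricorr}. The Dirichlet datum $g(x)=\gamma x_2$ is affine, so the background stress is still exactly $\mu\gamma e_2$.

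We restrict attention to sites $x$ with $|x|\le L/2$ and note that $|y_j|\le 2k+2\le L/4$ once $\alpha$ is small. Then \eqref{eq:corrbound} bounds each correction term by $C/L^2$, and the total correction by $C(k+1)/L^2\le C\alpha/L$. The self-interaction term $\Jcal_0(e_2\mathbbm{1}_{B_1(y)})/(\mu|B_1|)$ differs from $1/4$ by at most $C/\dist(y,\partial\Omega)^2\le C/L^2$, by \eqref{eq:enbound}.

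Using the energy-change formula \eqref{eq:changestz}, the quantity deciding minimality at a site $(y,\nu)$ is
\[
\Phi_{p'}(y,\nu)=\sigma(p')(y)\cdot\nu-\sigma_c-\gamma_0\,\Jcal_0(\nu\mathbbm{1}_{B_1(y)})/|B_1|.
\]
By the above, for sites with $|y|\le L/2$ this equals the whole-space quantity up to an error $C\gamma_0\alpha$. Here we use $\gamma\ge\gamma_0$ together with the bound on $\nu$-dependence from \eqref{eq:corrbound}. By Remark~\ref{rem:unifestimate1}, after translating the chain so that it starts at the origin, the whole-space quantity has a gap of at least $c\mu\gamma_0$ between the two endpoint sites and every other site. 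Choosing $\alpha$ with $C\alpha<c/2$ preserves the strict gap. Hence any minimizer lies at one of the two extension sites, with $\nu$ close to $e_2$. In fact $\nu=e_2$ exactly, which follows from the reflection symmetry \eqref{eq:symmetry}: it makes $\sigma(p')$ on the axis $\{x_2=0\}$ parallel to $e_2$, and it makes $\nabla V_y$ diagonal there. The extensions therefore stay in the claimed family.

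Two further points remain. First, sites with $|x|>L/2$ have to be excluded. There we use that the stress of a chain is $\mu\gamma e_2$ plus the dipole fields $\gamma_0\nabla w_{e_2,y_j}$. Each dipole field decays like $|x-y_j|^{-2}$ in the whole-space part, and the correction is bounded by $C/(\dist(x,\partial\Omega)L)$ via \eqref{eq:corrbound}. This is the main obstacle, because near the boundary $\dist(x,\partial\Omega)$ can be as small as $1$, so the correction is only $O(1/L)$ per zone. Summing over $k+1\le\alpha L$ zones gives $O(\alpha)$. Together with the bound $\Jcal_0\ge\mu|B_1|/4$ from \eqref{eq:enbound}, this shows $\Phi$ there is at most $\mu\gamma-\sigma_c-\mu\gamma_0/4+C\gamma_0\alpha$, whereas the endpoint value is at least $\mu\gamma_0/8$ larger. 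For $\gamma$ I would use the choice \eqref{eq:stabilityinit} together with the estimate $\nabla w_{e_2,0}(\pm2e_1)\cdot e_2\approx 1/8$. If the bound \eqref{eq:corrbound} with small $\dist(x,\partial\Omega)$ proves too weak, I would instead apply the maximum principle to $\xi\cdot\sum_j V_{y_j}$, using that its boundary data are $O(k/L)$.

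Second, we must show $\{p\}\subsetneq\Acal_k(p)$ and that the process actually starts. For $p$ itself, the choice \eqref{eq:stabilityinit} of $\gamma$ makes $\Phi_p(\pm2e_1,e_2)\ge 0$, with equality at the minimizing side. By the gap argument these two sites are the maximizers, so at least one nontrivial increment is a minimizer. For subsequent chains, the endpoint stress strictly increases, by the sum $\tfrac18\sum 1/j^2$ in \eqref{eq:3}, so $\Phi$ at the endpoints stays positive up to $O(\alpha\gamma_0)$ errors and $0$ is not a minimizer. This closes the induction.
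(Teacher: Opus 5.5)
Your proposal is correct and follows essentially the paper's route. You perturb off the whole-space chain via \eqref{eq:corrbound}, use the $k$-uniform gap of Remark~\ref{rem:unifestimate1}, use the reflection symmetry \eqref{eq:symmetry} to force $\nu=e_2$ at the endpoints, and use the calibration \eqref{eq:stabilityinit}, i.e.\ $\gamma=\sigma_c/\mu+\gamma_0/8+O(\gamma_0/L^2)$, to get $\Fcal=0$ for the single zone and $\Fcal<0$ once the chain has at least two zones. Your near/far split and maximum-principle fallback are unnecessary: for every site $x\in\Omega_1$ the crude bound $\sum_j 1/(\dist(x,\partial\Omega)\dist(y_j,\partial\Omega))\le 2k/L$ already suffices, which is what the paper uses.

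One sign slip: for the initial zone you need $\Phi_p(\pm2e_1,e_2)\le 0$, not $\ge 0$, with equality at the side realizing the minimum in \eqref{eq:stabilityinit}. This is exactly what makes both $0$ and that endpoint increment minimizers, and hence what puts $p$ itself into $\Acal_k(p)$.
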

\begin{proof}
By rescaling the domain, we may assume that $\delta=1$ and $L=\dist(0,\partial\O)$.  Suppose that  the plasticity is given by
\[
p' = \gamma_0\sum_{j=0}^{k-1} e_2\mathbbm{1}_{B_1(2(j-m) e_1)},
\]
for some $m \in \{0,\ldots,k-1\}$. For notational simplicity, we assume $m=0$. The general case is identical,
since translating the shear band only changes the distance of its endpoints
to the boundary by at most $2k$, and under the assumption
$k\le \alpha L$ all estimates below remain unchanged after possibly
decreasing $\alpha$. We infer from \eqref{eq:newdisplacementstress} that
\[
\frac{\sigma(p')(x)}{\mu}=\gamma e_2 + \gamma_0\sum_{j=0}^{k-1} \nabla v(x-2je_1) +\nabla V_{2je_1}(x)e_2 - e_2\mathbbm{1}_{B_1(2je_1)}(x).
\]
Let us denote the stress associated to the unbounded domain solution by
\[
\sigma_{\R^2}(p')(x) := \mu \left(\gamma e_2 +\gamma_0 \sum_{j=0}^{k-1} \left( \nabla v(x-2je_1) - e_2\mathbbm{1}_{B_1(2je_1)}(x)\right)\right)\,.
\]
By Remark \ref{rem:unifestimate1} there exists $c>0$ (independently of $k$) such that 
\[
\abs{\sigma_{\R^2}(p')(x_{\pm})} \geq \abs{\sigma_{\R^2}(p')(x)}+c\mu\gamma_0 \quad \text{for all $x \in \Lambda \setminus \{x_{\pm}\}$},
\]
with $x_{+}=2ke_1$ and $x_{-} = -2e_1$. Now, if $1 \leq k \leq L/4$, we infer from \eqref{eq:3} and \eqref{eq:corrbound} that
\begin{align}
\begin{split}\label{eq:stresscomp}
\abs{\sigma(p')(x_{\pm})} &\geq \abs{\sigma_{\R^2}(p')(x_{\pm})} - \mu\gamma_0 \sum_{j=0}^{k-1} \frac{1}{\dist(x_{\pm},\partial \Omega)\dist(2je_1,\partial \Omega)} \\
&\geq \abs{\sigma_{\R^2}(p')(x_{\pm})} -  \frac{4 \mu \gamma_0k}{L^2}\\
&\geq \mu\left(\gamma+\frac{\gamma_0}{8}\right)-  \frac{4  \mu \gamma_0k}{L^2}\,,
\end{split}
\end{align}
where we have assumed $\alpha<\frac14$, and hence  
\[
\min\{\dist(x_{\pm},\partial \Omega),\dist(2je_1,\partial \Omega)\} \geq L/2\,.
\]
If $\nu \in \S^1$, we find from \eqref{eq:changestz} and \eqref{eq:Iform}
  \begin{equation}\label{eq:6}
\Fcal_{p'}(\gamma_0\nu \mathbbm{1}_{B_1(x_{\pm})}) = -\gamma_0|B_1(0)| \left(\sigma(p')(x_{\pm}) \cdot \nu -\sigma_c -\frac{\gamma_0\mu}{4}+\frac{\gamma_0\mu}{2}\nu \cdot \nabla V_{x_{\pm}}(x_{\pm})\nu \right).
\end{equation}
Notice that by the symmetry of the domain \eqref{eq:symmetry}, we find that $\nabla V_{x_{\pm}}(x_{\pm})$ is a diagonal matrix. Again, by symmetry considerations, including the covariance of the boundary conditions $g(Rx)=-g(x)$ under the reflection $Rx=(x_1,-x_2)$, the stress $\sigma(p')(x_{\pm})$ is a multiple of $e_2$. 

Let $\sigma(p')(x_\pm)=s_\pm e_2$ and set
$A_\pm:=\nabla V_{x_\pm}(x_\pm)$. Since $A_\pm$ is diagonal, for every
$\nu\in\mathbb S^1$ we have
\[
\nu\cdot A_\pm\nu-e_2\cdot A_\pm e_2
=
\bigl((A_\pm)_{11}-(A_\pm)_{22}\bigr)\nu_1^2.
\]
Hence, using $\nu_1^2\leq 2(1-\nu_2)$,
\[
\begin{aligned}
&\Fcal_{p'}(\gamma_0\nu\mathbbm{1}_{B_1(x_\pm)})
-\Fcal_{p'}(\gamma_0e_2\mathbbm{1}_{B_1(x_\pm)})\\
&\qquad\geq
\gamma_0|B_1(0)|(1-\nu_2)
\left(
s_\pm-C\mu\gamma_0L^{-2}
\right).
\end{aligned}
\]
By the lower bound on $s_\pm$ from \eqref{eq:stresscomp}, the quantity in parentheses is positive
for $\alpha$ sufficiently small. Therefore the minimum over
$\nu\in\mathbb S^1$ is attained uniquely at $\nu=e_2$.
For $\nu=e_2$, we use \eqref{eq:stresscomp} in \eqref{eq:6} together with \eqref{eq:corrbound} to find
\begin{equation}\label{eq:Fpxpm}
\Fcal_{p'}(\gamma_0e_2 \mathbbm{1}_{B_1(x_{\pm})}) \leq -\gamma_0|B_1(0)| \left(|\sigma_{\R^2}(p')(x_{\pm})| -\sigma_c -\frac{\mu\gamma_0}{4}-\frac{4\mu\gamma_0(k+1)}{L^2} \right).
\end{equation}
On the other hand, for any other $x \in \Lambda \cap \Omega_1 \setminus \{x_{\pm}\}$, we have that $\dist(x,\partial \Omega) \geq 1$, so that \eqref{eq:corrbound} yields
\begin{align*}
\abs{\sigma(p')(x)} &\leq \abs{\sigma_{\R^2}(p')(x)} + \mu\gamma_0 \sum_{j=0}^{k-1} \frac{1}{\dist(x,\partial \Omega)\dist(2je_1,\partial \Omega)} \\
&\leq \abs{\sigma_{\R^2}(p')(x)} +  \frac{2\mu\gamma_0 k}{L}.
\end{align*}
Hence, we find by \eqref{eq:enbound} that
\begin{align*}
\Fcal_{p'}(\gamma_0\nu \mathbbm{1}_{B_1(x)}) &\geq -\gamma_0|B_1(0)| \left(|\sigma_{\R^2}(p')(x)| -\sigma_c - \frac{\mu\gamma_0}{4}+\frac{2\mu\gamma_0 k}{L} \right) \\
&\geq -\gamma_0|B_1(0)| \left(|\sigma_{\R^2}(p')(x_{\pm})| -\sigma_c - \frac{\mu\gamma_0}{4} +\frac{2\mu\gamma_0 k}{L}-c\mu\gamma_0\right) \\
&> \Fcal_{p'}(\gamma_0e_2 \mathbbm{1}_{B_1(x_{\pm})})
\end{align*}
for $\frac{k}{L}\leq\alpha$ chosen small enough.
This proves that either $\gamma_0e_2 \mathbbm{1}_{B_1(x_+)}$ or $\gamma_0e_2 \mathbbm{1}_{B_1(x_-)}$ must be the optimal non-zero plastic increment in that case; it could be both as well. It remains to check that this is energetically favorable when compared to not activating any plastic zone. When $k=1$, we find that
\[
\min_{x = x_{\pm}}\Fcal_{p'}(\gamma_0e_2 \mathbbm{1}_{B_1(x)})=0
\]
by \eqref{eq:stabilityinit}, which shows that both $0$ and at least one of $\gamma_0e_2 \mathbbm{1}_{B_1(x_{\pm})}$ are admissible increments. 

\medskip

To treat the case $k>1$, we first determine $\gamma$ more precisely. For
$x=\pm2e_1$, \eqref{eq:Iform} and the decomposition of
$w_{e_2,0}$ into the whole-space solution and its boundary correction give
\[
\frac{\Jcal_0(e_2\mathbbm{1}_{B_1(x)})}
     {\mu|B_1(0)|}
=
\frac14-\frac12 e_2\cdot\nabla V_x(x)e_2
\]
and
\[
\nabla w_{e_2,0}(x)\cdot e_2
=
\frac18+e_2\cdot\nabla V_0(x)e_2.
\]
It therefore follows from \eqref{eq:stabilityinit} and
\eqref{eq:corrbound} that
\[
\begin{aligned}
\gamma
&=
\frac{\sigma_c}{\mu}
+\min_{x=\pm2e_1}\gamma_0
\left(
\frac18
-\frac12 e_2\cdot\nabla V_x(x)e_2
-e_2\cdot\nabla V_0(x)e_2
\right)\\
&=
\frac{\sigma_c}{\mu}
+\frac{\gamma_0}{8}
+O\left(\frac{\gamma_0}{L^2}\right).
\end{aligned}
\]
On the other hand, the explicit whole-space computation (see \eqref{eq:3} from the proof of
Theorem~\ref{th:shearband}) yields
\[
|\sigma_{\R^2}(p')(x_\pm)|
=
\mu\left(
\gamma+\frac{\gamma_0}{8}
\sum_{j=1}^{k}\frac1{j^2}
\right).
\]
Consequently, for $k>1$,
\[
\begin{aligned}
|\sigma_{\R^2}(p')(x_\pm)|
-\sigma_c-\frac{\mu\gamma_0}{4}
&=
\mu\left(\gamma-\frac{\sigma_c}{\mu}-\frac{\gamma_0}{8}\right)
+\frac{\mu\gamma_0}{8}
\sum_{j=2}^{k}\frac1{j^2}\\
&\geq
\frac{\mu\gamma_0}{32}
-C\frac{\mu\gamma_0}{L^2},
\end{aligned}
\]
where we used
\[
\sum_{j=2}^{k}\frac1{j^2}\geq\frac14.
\]
Combining this estimate with \eqref{eq:Fpxpm}, we obtain
\[
\Fcal_{p'}(\gamma_0e_2\mathbbm{1}_{B_1(x_\pm)})
\leq
-\gamma_0|B_1(0)|
\left(
\frac{\mu\gamma_0}{32}
-C\frac{\mu\gamma_0}{L^2}
-\frac{4\mu\gamma_0 (k+1)}{L^2}
\right)<0
\]
whenever $k/L\leq\alpha$, after decreasing $\alpha>0$ if necessary.
Thus, at least one endpoint increment has strictly lower energy than the zero
increment for every $k>1$. This completes the proof.
\end{proof}
\begin{remark}[Non-symmetric domains]
It is clear by translation that the initial plasticity may be centered at a different point $x_0 \not =0$, as long as the symmetry condition \eqref{eq:symmetry} is satisfied around $x_0$. The symmetry condition is there to ensure that the plasticity still activates in the same direction as the unbounded domain case, which prevents the exponential accumulation of errors. With this in mind, the symmetry condition may be removed if the admissible increments also restrict the angles of the plasticity
\[
\Ical=\{\gamma_0\nu \mathbbm{1}_{B_\delta(y)} \,|\, \nu \in \{\nu_1,\ldots,\nu_\ell\}, y \in \delta\Lambda \cap \Omega_\delta\},
\]
with $e_2 \in \{\nu_1,\ldots,\nu_\ell\}$. Indeed, even though the stresses might not be in the $e_2$ direction exactly, any deviations are of order $1/L$, so it will not be optimal to change the angle from $\nu=e_2$ by a discrete amount.
\end{remark}

\subsection{Neumann boundary conditions} 

\label{sec:neumann}

In this case, we consider a bounded Lipschitz domain $O \subset \R^2$ that is symmetric in both axes, i.e.,
\[
O = \{x \in \R^2 \,|\, (x_1,-x_2) \in O\}=\{x \in \R^2 \,|\, (-x_1,x_2) \in O\},
\]
and consider $\Omega :=O \cap \R^{2}_{+}$ as our material domain. Dirichlet boundary conditions are imposed on $\Gamma:=\partial \Omega \cap \R^{2}_{+}$, while we have Neumann conditions on the remaining part $\partial \Omega \cap \partial \R^2_{+}$. For the activation sites, we consider the lattice $\Lambda_{+}$, cf.~\eqref{eq:latticeplus}, with zones entirely contained in $\Omega$ as in \eqref{eq:increments}.

The solution to \eqref{eq:displacementupdate} can be written
\[
w_{\nu,y}(x)=v_\nu(x-y)+\nu \cdot V_{y}(x) + v_{\nu}(x_1+y_1,x_2-y_2)+\nu \cdot V_{-y_1,y_2}(x),
\]
where $v_\nu$ is as in \eqref{eq:displcamentr2} and $V_y$ solves 
\begin{equation*}
\begin{cases}
\Delta V_{y} = 0 &\text{in $O$,}\\
V_{y}(x) = -V(x-y) &\text{on $\partial O$},
\end{cases}
\end{equation*}
in analogy  to \eqref{eq:diricorr}. From this, we find with \eqref{eq:quadraticcorr} that
\begin{equation}\label{eq:quadcorr2}
\Jcal_0(\nu \mathbbm{1}_{B_\delta(y)}) = \frac{\mu}{2}|B_\delta(0)|\left(\frac{1}{2}+\frac{\delta^2}{8y_1^2}(\nu_1^2-\nu_2^2)-\nu \cdot \nabla V_y(y)\nu - \nu \cdot \nabla V_{-y_1,y_2}(y)\nu\right),
\end{equation}
and, arguing as for \eqref{eq:enbound}, we find
\begin{equation}\label{eq:enboundneumann}
\begin{split}
&\frac{\mu}{2}|B_\delta(0)|\left(\frac{1}{2}+\frac{\delta^2}{8y_1^2}(\nu_1^2-\nu_2^2)\right)\\
&\qquad\qquad\leq \Jcal_0(\nu \mathbbm{1}_{B_\delta(y)}) \leq \frac{\mu}{2}|B_\delta(0)|\left(\frac{1}{2}+\frac{\delta^2}{8y_1^2}(\nu_1^2-\nu_2^2)+\frac{2\delta^2}{\dist(y,\partial O)^2}\right).
\end{split}
\end{equation}
We come to the main statement, whose proof follows almost exactly in the same way as Theorem~\ref{th:partialshearband} by reducing to the unbounded domain case Theorem~\ref{th:shearneumann}.
\begin{theorem}
\label{thm:bounded_neumann}
Let $O \subset \R^2$ be a bounded Lipschitz domain with $\delta e_1 \in O$ and such that 
\begin{equation}\label{eq:symmetry2}
O = \{x \in \R^2 \,|\, (x_1,-x_2) \in O\}=\{x \in \R^2 \,|\, (-x_1,x_2) \in O\},
\end{equation}
and $\Omega = O \cap \R^{2}_{+}$ and $\Gamma = \partial \O \cap \R^2_{+}$. Suppose the increments $\Ical$ are given by \eqref{eq:increments} with $\Lambda=\Lambda_{+}$ and consider the initial plasticity $p=\gamma_0 e_2 \mathbbm{1}_{B_\delta(\delta e_1)}$. If the boundary condition is given by $g(x)=\gamma x_2$ with
\begin{equation}\label{eq:stabilityinit2}
\gamma = \frac{\sigma_c}{\mu}+\gamma_0\left(\frac{\Jcal_0(e_2\mathbbm{1}_{B_\delta(3\delta e_1)})}{\mu|B_\delta(0)|}-\nabla w_{e_2,\delta e_1}(3\delta e_1)\cdot e_2\right) \geq \gamma_0,
\end{equation}
then, there exists a constant $\alpha >0$ independent of $\gamma_0,\gamma,\delta,O$ such that with $L:=\dist(\delta e_1,\partial O)/\delta$ it holds for $k \in \N$ that
\[
\{p\} \subsetneq \Acal_{k}(p) \subset \{p\} \cup \left\{\gamma_0\sum_{j=0}^{k} e_2\mathbbm{1}_{B_\delta((2j+1)\delta e_1)} \right\} \quad \text{if $k \leq \alpha L$}.
\]
\end{theorem}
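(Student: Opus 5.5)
The plan is to mirror the proof of Theorem~\ref{th:partialshearband}, with the half-space result Theorem~\ref{th:shearneumann} as the unbounded comparison problem in place of Theorem~\ref{th:shearband}. After rescaling to $\delta=1$ we have $L=\dist(e_1,\partial O)$. We argue by induction on $k$, starting from the configuration
\[
p'=\gamma_0\sum_{j=0}^{k-1}e_2\mathbbm{1}_{B_1((2j+1)e_1)} .
\]
Unlike the Dirichlet case, the band can only grow to the right, so there is no translation parameter $m$. By the formula for $w_{\nu,y}$ and \eqref{eq:newdisplacementstress}, the stress splits as
\[
\sigma(p')=\sigma_{\R^2_+}(p')+\mu\gamma_0\sum_{j=0}^{k-1}\bigl(\nabla V_{(2j+1)e_1}+\nabla V_{-(2j+1)e_1}\bigr)e_2 .
\]
Here $\sigma_{\R^2_+}(p')$ is exactly the half-space stress \eqref{eq:7}. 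The correction terms are controlled by the analogue of \eqref{eq:corrbound} on $O$. For $k\le\alpha L$ with $\alpha<1/4$, both the points $\pm(2j+1)e_1$ and the candidate site $x_+=(2k+1)e_1$ lie at distance at least $L/2$ from $\partial O$. This gives a correction of size $O(\mu\gamma_0k/L^2)$ at $x_+$, and of size $O(\mu\gamma_0k/L)$ at any other lattice point of $\Omega_1$.

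Next we compare candidates. By \eqref{eq:quadcorr2}, $\Fcal_{p'}(\gamma_0\nu\mathbbm{1}_{B_1(y)})$ equals $-\gamma_0|B_1(0)|$ times the half-space expression in \eqref{eq:mincharplus}, up to the perturbation terms $\nu\cdot\nabla V_y(y)\nu$ and $\nu\cdot\nabla V_{-y_1,y_2}(y)\nu$. These are bounded via \eqref{eq:enboundneumann}, and are $O(L^{-2})$ at $x_+$. Remark~\ref{rem:unifestimate2} supplies a gap of $c\gamma_0$, uniform in $k$, between the half-space objective at $(x_+,e_2)$ and at every other site. The error at other sites is $O(\gamma_0 k/L)$, so choosing $\alpha$ small makes $x_+$ the unique optimal site.

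The orientation at $x_+$ is handled as in Theorem~\ref{th:partialshearband}. The symmetry \eqref{eq:symmetry2} of $O$ in $x_2$, together with the oddness of $g$ under $x_2\mapsto-x_2$, makes $\sigma(p')(x_+)$ parallel to $e_2$. It also makes $\nabla V_{x_+}(x_+)$ and $\nabla V_{-x_{+,1},0}(x_+)$ diagonal; for the second we use the reflection symmetry in $x_1$ of $O$. The term $\frac{1}{8y_1^2}(\nu_1^2-\nu_2^2)$ already favours $\nu=e_2$. Using $\nu_1^2\le2(1-\nu_2)$ we get that $\nu=e_2$ is the unique optimal direction once $s_+\gtrsim\mu\gamma>C\mu\gamma_0L^{-2}$.

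It remains to compare with the zero increment. For $k=1$, \eqref{eq:stabilityinit2} is designed so that $\Fcal_p(\gamma_0e_2\mathbbm{1}_{B_1(3e_1)})=0$; then both $0$ and this increment are minimizers, which gives the strict inclusion $\{p\}\subsetneq\Acal_1(p)$. For $k>1$ we first expand \eqref{eq:stabilityinit2} using \eqref{eq:quadcorr2} and the decomposition of $w_{e_2,e_1}(3e_1)$:
\[
\gamma=\frac{\sigma_c}{\mu}+\gamma_0\Bigl(\tfrac14-\tfrac1{144}-\tfrac18-\tfrac1{32}\Bigr)+O(\gamma_0L^{-2}),
\]
where the $-\tfrac18-\tfrac1{32}$ comes from the two Eshelby gradients at $3e_1$ generated by the sources at $\pm e_1$. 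We then compute the half-space objective at $x_+$ explicitly from \eqref{eq:7}; it contains the sum $\frac{\gamma_0}{8}\sum_{j}$ of terms $1/j^2$ over the $2k$ image sources, plus the self-image term. The extra sources present once $k\ge2$, compared with $k=1$, contribute a fixed positive margin of order $\gamma_0$. This margin beats the $O(\gamma_0k/L^2)$ errors, so the endpoint increment is strictly better than zero.

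The main obstacle is this last bookkeeping step. The threshold $\gamma$ is calibrated to the $k=1$ configuration, which includes its image charge, so it has to be checked that the monotone growth of $\sum 1/j^2$ with the number of sources really produces a uniform positive gap for all $k\ge2$. Because the image charges sit at odd positions, this needs an honest lower bound rather than a copy of the estimate $\sum_{j\ge2}j^{-2}\ge1/4$ used in the Dirichlet case.
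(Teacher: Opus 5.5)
Your proposal is correct and follows the paper's proof essentially step for step. Both arguments use the same ingredients: comparison with the half-space stress \eqref{eq:7} through the uniform gap of Remark~\ref{rem:unifestimate2}, boundary corrections of order $k/L^2$ at $(2k+1)e_1$ and $k/L$ elsewhere, reflection symmetry to force $\nu=e_2$, the calibration \eqref{eq:stabilityinit2} giving $\Fcal_p(\gamma_0e_2\mathbbm{1}_{B_1(3e_1)})=0$ for $k=1$, and an explicit expansion of $\gamma$ for $k>1$.

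The bookkeeping you flag is exactly what the paper does. The expansion $\gamma=\frac{\sigma_c}{\mu}+\frac{25}{288}\gamma_0+O(\gamma_0L^{-2})$ reduces the endpoint margin to $\frac{\mu\gamma_0}{8}\sum_{j=3}^{2k}j^{-2}+\frac{\mu\gamma_0}{16(2k+1)^2}-\frac{\mu\gamma_0}{144}\geq\frac{17}{1152}\mu\gamma_0$, up to errors controlled by $k/L\leq\alpha$. The positive gap comes from the new terms $j\geq3$ outweighing the loss $\frac{1}{144}-\frac{1}{16(2k+1)^2}$ in the shrinking self-image term, not from the growth of $\sum j^{-2}$ alone.
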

\begin{proof}
By rescaling the domain, we may assume that $\delta=1$ and $L=\dist(e_1,\partial O)$.  Suppose now that $k \in \N$ and the plasticity is given by
\[
p' = \gamma_0\sum_{j=0}^{k-1} e_2\mathbbm{1}_{B_1(2(j+1) e_1)}.
\]
We infer from \eqref{eq:newdisplacementstress} that
  \begin{equation}\label{eq:9}
\frac{\sigma(p')(x)}{\mu}=\gamma e_2 +\gamma_0 \sum_{j=-k}^{k-1}\left( \nabla v(x-(2j+1)e_1) +\nabla V_{(2j+1)e_1}(x)e_2 - e_2\mathbbm{1}_{B_1((2j+1)e_1)}(x)\right)\,.
\end{equation}
If we denote the stress associated to the unbounded domain solution by
\[
\sigma_{\R^2_{+}}(p')(x) := \mu \left(\gamma e_2 + \gamma_0\sum_{j=-k}^{k-1} \nabla v(x-(2j+1)e_1) - e_2\mathbbm{1}_{B_1((2j+1)e_1)}(x)\right),
\]
then by Remark \ref{rem:unifestimate2} there exists $c >0$ such that
  \begin{equation}\label{eq:8}
|\sigma_{\R^2_{+}}(p')((2k+1)e_1)|+ \frac{\mu\gamma_0}{16(2k+1)^2} \geq |\sigma_{\R^2_{+}}(p')(x)|+\frac{\mu\gamma_0}{16x_1^2}+c\mu\gamma_0
\end{equation}
for all  $x \in \Lambda_{+} \setminus \{(2k+1)e_1\}$. Now, if $1 \leq k \leq L/4$, we infer from \eqref{eq:corrbound} with $\O$ replaced by $O$ and \eqref{eq:9} that
\begin{align}
\begin{split}\label{eq:stresscomp2}
\abs{\sigma(p')((2k+1)e_1)} &\geq \abs{\sigma_{\R^2_{+}}(p')((2k+1)e_1)}\\
&\qquad - \mu \gamma_0\sum_{j=-k}^{k-1} \frac{1}{\dist((2k+1)e_1,\partial O)\dist((2j+1)e_1,\partial O)} \\
&\geq \abs{\sigma_{\R^2_{+}}(p')((2k+1)e_1)} -  \frac{8\mu\gamma_0 k}{L^2},
\end{split}
\end{align}
where we assumed $\alpha<\frac14$, and hence 
\[
\min\{\dist((2k+1)e_1,\partial \Omega),\dist((2j+1)e_1,\partial \Omega)\} \geq L/2\,.
\]
If $\nu \in \S^1$, we find from \eqref{eq:changestz} and \eqref{eq:quadcorr2}
  \begin{equation}\label{eq:10}
    \begin{split}
\Fcal_{p'}(\gamma_0\nu \mathbbm{1}_{B_1((2k+1)e_1)}) &= -\gamma_0|B_1(0)| \left(\sigma(p')((2k+1)e_1) \cdot \nu -\sigma_c -\frac{\mu\gamma_0}{4}+\frac{\mu\gamma_0}{16(2k+1)^2}(\nu_2^2-\nu_1^2)\right.\\
 &\qquad\left. +\frac{\mu\gamma_0}{2}\nu \cdot \left(\nabla V_{(2k+1)e_1}((2k+1)e_1)+\nabla V_{-(2k+1)e_1}((2k+1)e_1)\right)\nu \right).
\end{split}
\end{equation}
By the symmetry of the domain \eqref{eq:symmetry2}, we find that $\nabla V_{(2k+1)e_1}((2k+1)e_1)$ and $\nabla V_{-(2k+1)e_1}((2k+1)e_1)$ are diagonal matrices with size bounded by $4/L^2$ by \eqref{eq:corrbound}. By symmetry of the domain and the covariance of the boundary conditions under the reflection $x\mapsto(-x_1,x_2)$,  the stress $\sigma(p')((2k+1)e_1)$ is a multiple of $e_2$. By the same arguments as in the proof of Theorem \ref{th:partialshearband}, we conclude that $\Fcal_{p'}(\nu \mathbbm{1}_{B_1((2k+1)e_1)})$ is minimized at $\nu = e_2$ for $k/L\leq \alpha$ sufficiently small. For $\nu=e_2$, we use  the estimates \eqref{eq:corrbound}, \eqref{eq:stresscomp2} in \eqref{eq:10} to obtain
\begin{equation}\label{eq:Fpxpm2}
\begin{split}
&\Fcal_{p'}(\gamma_0e_2 \mathbbm{1}_{B_1((2k+1)e_1)}) \\
&\qquad\leq -\gamma_0|B_1(0)| \left(|\sigma_{\R^2_{+}}(p')((2k+1)e_1)| -\frac{8\mu\gamma_0 k}{L^2}-\sigma_c -\frac{\mu\gamma_0}{4}+\frac{\mu\gamma_0}{16(2k+1)^2}-\frac{\mu\gamma_0}{L} \right)\,.
\end{split}
\end{equation}
This needs to be compared with $\mathcal F_{p'}(q_{\nu,x})$ for  $x \in \Lambda_{+} \cap \Omega_1 \setminus \{(2k+1)e_1\}$: In this case we have $\dist(x,\partial O) \geq 1$ so that \eqref{eq:corrbound} yields
  \begin{equation}\label{eq:11}
    \begin{split}
\abs{\sigma(p')(x)} &\leq \abs{\sigma_{\R^2_{+}}(p')(x)} + \mu\gamma_0 \sum_{j=-k}^{k-1} \frac{1}{\dist(x,\partial O)\dist((2j+1)e_1,\partial O)} \\
&\leq \abs{\sigma_{\R^2_{+}}(p')(x)} + \frac{4\mu\gamma_0 k}{L}\,.
\end{split}
\end{equation}
For   $x \in \Lambda_{+} \cap \Omega_1 \setminus \{(2k+1)e_1\}$, we may thus set up the following chain of inequalities:
\begin{align*}
\Fcal_{p'}(\gamma_0\nu \mathbbm{1}_{B_1(x)}) &\geq -\gamma_0|B_1(0)| \left(|\sigma_{\R^2_{+}}(p')(x)| -\sigma_c +\frac{4\mu\gamma_0 k}{L}- \frac{\mu\gamma_0}{4} +\frac{\mu\gamma_0}{16x_1^2} \right) \\
&\geq -\gamma_0|B_1(0)| \left(|\sigma_{\R^2_{+}}(p')((2k+1)e_1)| -c\mu\gamma_0-\sigma_c +\frac{4\mu\gamma_0 k}{L}- \frac{\mu\gamma_0}{4}+\frac{\mu\gamma_0}{16(2k+1)^2}  \right) \\
&>\Fcal_{p'}(\gamma_0 e_2 \mathbbm{1}_{B_1((2k+1)e_1)}),
\end{align*}
where the first inequality uses \eqref{eq:changestz}, \eqref{eq:enboundneumann} and \eqref{eq:11}, the second inequality uses \eqref{eq:8}, and the last inequality uses \eqref{eq:Fpxpm2} together with
\[
\frac{4 k}{L}+\frac{8k}{L^2}+\frac{1}{L}<c\,,
\]
which can be achieved by choosing $\alpha$ small enough.
This proves that either $\gamma_0e_2 \mathbbm{1}_{B_1((2k+1)e_1)}$ is the optimal non-zero plastic increment. 

\medskip

It remains to check that activating a zone at the endpoint is
energetically favorable when compared to not activating any plastic zone.
When $k=1$, we find from \eqref{eq:stabilityinit2} that
\[
\Fcal_{p'}(\gamma_0e_2\mathbbm{1}_{B_1(3e_1)})=0,
\]
so that both $0$ and
$\gamma_0e_2\mathbbm{1}_{B_1(3e_1)}$ are minimizing increments.

For $k>1$, we first determine the loading parameter more precisely. By
\eqref{eq:quadcorr2},
\[
\frac{\Jcal_0(e_2\mathbbm{1}_{B_1(3e_1)})}
     {\mu|B_1(0)|}
=
\frac12\left(\frac12-\frac1{72}\right)
-\frac12 e_2\cdot
\left(\nabla V_{3e_1}(3e_1)+\nabla V_{-3e_1}(3e_1)\right)e_2.
\]
Moreover, using the representation of $w_{e_2,e_1}$ and the identity
\[
\nabla v(re_1)\cdot e_2=\frac1{2r^2},
\]
we obtain
\[
\nabla w_{e_2,e_1}(3e_1)\cdot e_2
=
\frac18+\frac1{32}
+e_2\cdot
\left(\nabla V_{e_1}(3e_1)+\nabla V_{-e_1}(3e_1)\right)e_2.
\]
It follows from \eqref{eq:stabilityinit2} and \eqref{eq:corrbound} that
\[
\begin{aligned}
\gamma
&=
\frac{\sigma_c}{\mu}
+\gamma_0\left(\frac{35}{144}-\frac5{32}\right)
+O\left(\frac{\gamma_0}{L^2}\right)\\
&=
\frac{\sigma_c}{\mu}
+\frac{25\gamma_0}{288}
+O\left(\frac{\gamma_0}{L^2}\right).
\end{aligned}
\]

On the other hand, evaluating the explicit half-space stress
\eqref{eq:7} at the endpoint gives
\[
|\sigma_{\R^2_{+}}(p')((2k+1)e_1)|
=
\mu\left(
\gamma+\frac{\gamma_0}{8}
\sum_{j=1}^{2k}\frac1{j^2}
\right).
\]
Consequently,
\[
\begin{aligned}
&|\sigma_{\R^2_{+}}(p')((2k+1)e_1)|
+\frac{\mu\gamma_0}{16(2k+1)^2}
-\sigma_c-\frac{\mu\gamma_0}{4}\\
&\qquad=
\frac{\mu\gamma_0}{8}
\sum_{j=3}^{2k}\frac1{j^2}
+\frac{\mu\gamma_0}{16(2k+1)^2}
-\frac{\mu\gamma_0}{144}
+O\left(\frac{\mu\gamma_0}{L^2}\right).
\end{aligned}
\]
Here we have used that the corresponding leading-order expression vanishes
for $k=1$. If $k>1$, then
\[
\frac18\sum_{j=3}^{2k}\frac1{j^2}
+\frac1{16(2k+1)^2}
-\frac1{144}
\geq
\frac18\left(\frac19+\frac1{16}\right)-\frac1{144}
=
\frac{17}{1152}.
\]
It follows that
\[
|\sigma_{\R^2_{+}}(p')((2k+1)e_1)|
+\frac{\mu\gamma_0}{16(2k+1)^2}
-\sigma_c-\frac{\mu\gamma_0}{4}
\geq
\frac{17\mu\gamma_0}{1152}
-C\frac{\mu\gamma_0}{L^2}.
\]
Combining this with \eqref{eq:Fpxpm2}, we obtain
\[
\Fcal_{p'}(\gamma_0e_2\mathbbm{1}_{B_1((2k+1)e_1)})<0
\]
whenever $k/L\leq\alpha$, after decreasing $\alpha>0$ if necessary.
Thus the endpoint increment has strictly lower energy than the zero
increment for every $k>1$.
\end{proof}

\subsection*{Acknowledgements}
H.S. was funded by the Fonds de la Recherche Scientifique - FNRS through a MIS-Ulysse project (Scientific Impulse Mandate Instrument) number F.6002.25. 

\noindent The authors gratefully acknowledge Vasudevan Kamasamudram, Nicolas Mo\"es and Thomas Pardoen for their insightful comments and stimulating discussions during the development of this work.  

\subsection*{Generative AI declaration}
OpenAI’s ChatGPT was used as an interactive mathematical assistant during the preparation and revision of this article. Beyond general proof-auditing and expository improvements, including the creation of Figure~\ref{fig}, it was used in Section~\ref{sec:setting} to analyze the compactness and stability arguments and to scrutinize the passage from discrete minimizers to the stability conditions at limiting jumps. In Sections~\ref{sec:formation_thickening} and \ref{sec:half-space}, we used it to find initial formulas for some harmonic functions and infinite series. Finally, in Section~\ref{sec:bounded_domains}, it was used to check the perturbative comparison with the whole-space and half-space problems and to verify several explicit stress calculations, which led to some corrections of an early version of the manuscript. The mathematical ideas and results were developed by the authors; every AI-assisted argument and calculation was independently reconstructed and verified by the authors, who take full responsibility for the final manuscript.

\appendix

\section{Auxiliary series}

\label{sec:auxiliary_series}

\begin{lemma}\label{le:sums}
For every $a>0$,
\[
\sum_{j\in\mathbb Z}\frac{j^2-a^2}{(j^2+a^2)^2}
=
-\frac{\pi^2}{\sinh^2(\pi a)}\,,\qquad \sum_{j\in\mathbb Z}\frac{(2j+1)^2-a^2}{((2j+1)^2+a^2)^2}
=
\frac{\pi^2}{4\cosh^2(\pi a/2)}\,.
\]
\end{lemma}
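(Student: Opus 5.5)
The plan is to derive both identities from the classical partial-fraction expansion
\[
\sum_{j\in\Z}\frac{1}{j^2+a^2}=\frac{\pi}{a}\coth(\pi a),\qquad a>0,
\]
by differentiating in $a$, and to get the odd-index version by subtracting the even-index terms. The starting point is the algebraic identity
\[
\frac{j^2-a^2}{(j^2+a^2)^2}=\frac{1}{j^2+a^2}-\frac{2a^2}{(j^2+a^2)^2}=\frac{1}{j^2+a^2}+a\,\frac{\partial}{\partial a}\Bigl(\frac{1}{j^2+a^2}\Bigr)=\frac{\partial}{\partial a}\Bigl(\frac{a}{j^2+a^2}\Bigr).
\]
So the first sum equals $\frac{\d}{\d a}\sum_j \frac{a}{j^2+a^2}=\frac{\d}{\d a}\bigl(\pi\coth(\pi a)\bigr)=-\frac{\pi^2}{\sinh^2(\pi a)}$. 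Termwise differentiation is legitimate because the differentiated series converges uniformly on compact subsets of $(0,\infty)$, with terms bounded by $C/(j^2+a^2)$.

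For the second identity, split $\Z$ into even and odd integers. Writing $S(a):=\sum_{n\in\Z}\frac{n^2-a^2}{(n^2+a^2)^2}$, the even part with $n=2j$ is
\[
\sum_j\frac{4j^2-a^2}{(4j^2+a^2)^2}=\frac14 S(a/2)=-\frac{\pi^2}{4\sinh^2(\pi a/2)}.
\]
The odd sum is then
\[
S(a)-\tfrac14 S(a/2)=-\frac{\pi^2}{\sinh^2(\pi a)}+\frac{\pi^2}{4\sinh^2(\pi a/2)}.
\]
To simplify, use $\sinh(\pi a)=2\sinh(\pi a/2)\cosh(\pi a/2)$. This gives
\[
\frac{\pi^2}{4\sinh^2(\pi a/2)}\Bigl(1-\frac{1}{\cosh^2(\pi a/2)}\Bigr)=\frac{\pi^2}{4\sinh^2(\pi a/2)}\cdot\frac{\sinh^2(\pi a/2)}{\cosh^2(\pi a/2)}=\frac{\pi^2}{4\cosh^2(\pi a/2)}.
\]

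The only real input is the cotangent/coth partial-fraction expansion, which I will quote as standard; it follows, for instance, from the Mittag-Leffler expansion $\pi\cot(\pi z)=\sum_j \frac{1}{z-j}$ evaluated at $z=ia$, with symmetric summation. No step is a genuine obstacle; the only care needed is justifying the interchange of derivative and sum and keeping the factor bookkeeping in the even/odd split correct.
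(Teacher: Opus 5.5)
Your proof is correct and follows essentially the same route as the paper: it uses the Mittag-Leffler expansion $\sum_{j}(j^2+a^2)^{-1}=\frac{\pi}{a}\coth(\pi a)$, termwise differentiation in $a$, and an even/odd splitting. Your identity $\frac{j^2-a^2}{(j^2+a^2)^2}=\partial_a\frac{a}{j^2+a^2}$ is the same device as the paper's $S+aS'=\frac{\d}{\d a}(aS)$. The only difference is that the paper removes the even terms from $\sum_j (j^2+a^2)^{-1}$ before differentiating, obtaining $\frac{\pi}{2a}\tanh(\pi a/2)$, whereas you split after the first identity and finish with $\sinh(\pi a)=2\sinh(\pi a/2)\cosh(\pi a/2)$; both are equally short.
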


\begin{proof}
The classical Mittag--Leffler expansion \cite[Chapter 7.4]{WhW20} 
 states that
  \begin{equation}\label{eq:ML}
\sum_{k\in\mathbb Z}\frac{1}{k^2+a^2}
=
\frac{\pi}{a}\coth(\pi a).
\end{equation}
Differentiating both sides with respect to $a$ and dividing by $-2a$ yields
\[
\sum_{k\in\mathbb Z}\frac{1}{(k^2+a^2)^2}
=
\frac{\pi}{2a^3}\coth(\pi a)
+
\frac{\pi^2}{2a^2}\operatorname{sinh}^{-2}(\pi a).
\]
Using the identity
\[
\frac{k^2-a^2}{(k^2+a^2)^2}
=
\frac{1}{k^2+a^2}
-
\frac{2a^2}{(k^2+a^2)^2},
\]
we obtain
\begin{align*}
\sum_{k\in\mathbb Z}\frac{k^2-a^2}{(k^2+a^2)^2}
&=
\frac{\pi}{a}\coth(\pi a)
-
2a^2
\left(
\frac{\pi}{2a^3}\coth(\pi a)
+
\frac{\pi^2}{2a^2}\operatorname{sinh}^{-2}(\pi a)
\right)\\
&=
-\pi^2\operatorname{sinh}^{-2}(\pi a)
=
-\frac{\pi^2}{\sinh^2(\pi a)},
\end{align*}
which proves the first identity.
For the second identity, let
\[
S(a):=\sum_{j\in\mathbb Z}\frac{1}{(2j+1)^2+a^2}=
\sum_{j\in\mathbb Z}\frac{1}{j^2+a^2}
-
\sum_{j\in\mathbb Z}\frac{1}{(2j)^2+a^2}\,.
\]
Since
\[
\sum_{j\in\mathbb Z}\frac{1}{(2j)^2+a^2}
=
\frac14
\sum_{j\in\mathbb Z}\frac{1}{j^2+(a/2)^2}
=
\frac{\pi}{2a}\coth\left(\frac{\pi a}{2}\right)\,,
\]
where we have used \eqref{eq:ML}, we find
\[
S(a)
=
\frac{\pi}{a}\coth(\pi a)
-
\frac{\pi}{2a}\coth\left(\frac{\pi a}{2}\right)
=
\frac{\pi}{2a}\tanh\left(\frac{\pi a}{2}\right)\,.
\]
Now
\[
\frac{(2j+1)^2-a^2}{((2j+1)^2+a^2)^2}
=
\frac{1}{(2j+1)^2+a^2}
-
\frac{2a^2}{((2j+1)^2+a^2)^2}.
\]
Since
\[
S'(a)
=
-2a\sum_{j\in\mathbb Z}
\frac{1}{((2j+1)^2+a^2)^2},
\]
we obtain
\[
\sum_{j\in \mathbb Z}
\frac{(2j+1)^2-a^2}{((2j+1)^2+a^2)^2}
=
S(a)+aS'(a)
=
\frac{d}{da}\bigl(aS(a)\bigr)= \frac{\pi^2}{4\cosh^2(\pi a/2)}\,.
\]
\end{proof}

\begin{lemma}
\label{lem:sumleq0}
Let $b\in\mathbb \R$ with $|b|\geq 1$, and define
\[
a_\ell:=\frac{\ell^2-b^2}{(\ell^2+b^2)^2}
\qquad\text{for }\ell\in\mathbb Z.
\]
Then for all $A,B\in\mathbb N\setminus\{0\}$,
\[
\sum_{\ell=-A}^{B} a_\ell \leq 0 \,,
\]
and for all $A\in \N$, $B\in\N\setminus\{0\}$, 
\[
\sum_{\ell=-A}^{B}a_{2\ell-1}\leq \frac{\pi^2}{4\cosh^2(\pi b/2)}\,.
\]
\end{lemma}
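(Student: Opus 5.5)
The plan is to exploit the sign structure of the summand together with the closed-form full sums from Lemma~\ref{le:sums}. Set $\varphi(t):=\frac{t^2-b^2}{(t^2+b^2)^2}$, so that $a_\ell=\varphi(\ell)$. The function $\varphi$ is even. It is strictly negative for $|t|<|b|$ and strictly positive for $|t|>|b|$.

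\emph{Step 1: reduce to extreme ranges.} Regard the partial sum $S(A,B):=\sum_{\ell=-A}^{B}a_\ell$ as a function of $B$ with $A$ fixed. The increment $S(A,B)-S(A,B-1)=a_B$ is negative while $B<|b|$ and nonnegative once $B\geq|b|$. Hence $B\mapsto S(A,B)$ first decreases and then increases, so on $\{1,2,\dots\}$ it is bounded by $\max\{S(A,1),\,S(A,\infty)\}$, where $S(A,\infty)$ denotes the (absolutely convergent) limit. The same argument applies in $A$, using the evenness of $\varphi$. Consequently
\[
\sup_{A,B\geq1}S(A,B)\le\max\{S(1,1),S(1,\infty),S(\infty,1),S(\infty,\infty)\}.
\]

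\emph{Step 2: evaluate the four corners.}
\begin{itemize}
\item By Lemma~\ref{le:sums}, $S(\infty,\infty)=-\pi^2/\sinh^2(\pi b)<0$.
\item Since $|b|\geq1$, we have $a_{\pm1}\le0$ and $a_0=-1/b^2<0$, so $S(1,1)<0$.
\item By symmetry, write the full sum as $a_0+2a_1+2T$ with $T:=\sum_{\ell\ge2}a_\ell$. Then $S(1,\infty)=a_{-1}+a_0+a_1+T=\tfrac12 S(\infty,\infty)+\tfrac12 a_0<0$, and $S(\infty,1)=S(1,\infty)$ by evenness.
\end{itemize}
This proves the first claim.

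\emph{Step 3: the odd-index claim.} Apply the same strategy to $\sum_{\ell=-A}^{B}a_{2\ell-1}$, which is the sum of $\varphi$ over the odd integers in $[-2A-1,\,2B-1]$, with $A\ge0$ and $B\ge1$. The monotonicity argument of Step 1 again reduces everything to the corners, where an index range is either minimal (upper end $1$, lower end $-1$) or infinite.
\begin{itemize}
\item The full odd sum equals $\Sigma:=\pi^2/(4\cosh^2(\pi b/2))>0$ by the second identity of Lemma~\ref{le:sums}.
\item The minimal range $\{-1,1\}$ gives $2a_1\le0\le\Sigma$.
\item A half-infinite range gives, by evenness, $\tfrac12\Sigma+a_1\le\tfrac12\Sigma\le\Sigma$.
\end{itemize}
This yields the bound.

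The only delicate point is justifying the unimodality reduction cleanly, including passing to the infinite limits. This follows from the sign pattern of $\varphi$ and absolute summability, since $|\varphi(t)|\lesssim t^{-2}$. I also need to check that the hypothesis $|b|\geq1$ is used exactly to guarantee $a_{\pm1}\le0$ at the corner configurations.
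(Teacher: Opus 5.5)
Your argument is correct and is essentially the paper's proof. In both, the sign pattern of $a_\ell$ makes the partial sums unimodal, so only the minimal and infinite index ranges matter, and these are evaluated with Lemma~\ref{le:sums}; the paper organizes the first claim through the evenness decomposition $\sum_{\ell=-A}^{B}a_\ell=S_A+S_B-a_0$ with $S_N=\sum_{\ell=0}^{N}a_\ell\le -\frac{1}{2b^2}$, and the second through $\tilde S_{A+1}+\tilde S_B\le\max\{2\tilde S_1,2\tilde S_\infty\}$. One arithmetic slip: actually $S(1,\infty)=a_0+2a_1+T=\tfrac12 S(\infty,\infty)+\tfrac12 a_0+a_1$, not $\tfrac12 S(\infty,\infty)+\tfrac12 a_0$; since $a_1\le 0$ for $|b|\ge 1$, your conclusion $S(1,\infty)<0$ still holds.
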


\begin{proof}
Since $a_\ell$ is even, we have
\[
\sum_{\ell=-A}^{B}a_\ell
=
\sum_{\ell=0}^{A}a_\ell+\sum_{\ell=0}^{B}a_\ell-a_0 .
\]
Set
\[
S_N:=\sum_{\ell=0}^{N}a_\ell .
\]
The terms $a_\ell$ are negative for $|\ell|<|b|$ and positive for $|\ell|>|b|$. Hence $S_N$ first decreases and then increases. Moreover, using Lemma \ref{le:sums},
\[
\sum_{\ell\in\mathbb Z}a_\ell
=
-\pi^2\sinh(\pi b)^{-2},
\]
we obtain
\[
\lim_{N\to\infty}S_N
=
\frac12\left(a_0+\sum_{\ell\in\mathbb Z}a_\ell\right)
=
-\frac12\left(\frac1{b^2}+\pi^2\sinh(\pi b)^{-2}\right)
\leq -\frac1{2b^2}.
\]
Since $S_0=a_0=-1/b^2$, it follows that
\[
S_N\leq -\frac1{2b^2}
\qquad\text{for all }N\geq0.
\]
Therefore
\[
\sum_{\ell=-A}^{B}a_\ell
=
S_A+S_B-a_0
\leq
-\frac1{2b^2}-\frac1{2b^2}+\frac1{b^2}
=0.
\]

\medskip

In the same way, we may set 
\[
\tilde S_B=\sum_{\ell=1}^B a_{2\ell-1}\,,
\]
and obtain 
\[
\sum_{\ell=-A}^B a_{2\ell-1}=\tilde S_{A+1}+\tilde S_B\,.
\]
Again $a_\ell$ will make a negative contribution for  $|2\ell-1|<|a|$, and a positive contribution for  $|2\ell-1|>a$. Hence 
\[
  \begin{split}
\tilde S_{A+1}+\tilde S_B&\leq \max\left\{2\tilde S_1,2\tilde S_\infty\right\}\\
&\leq \max \left\{\frac{2(1-b^2)}{(1+b^2)^2},\frac{\pi^2}{4\cosh^2(\pi b/2)}\right\}\\
&=\frac{\pi^2}{4\cosh^2(\pi b/2)}\,,
\end{split}
\]
where we have used Lemma \ref{le:sums}.
\end{proof}

\begin{lemma}\label{le:sums2}
For $k \in \N$ it holds that
\[
f(k):= \frac{1}{2}\sum_{j \in \Z} \frac{(k-2j)^2-3k^2}{((k-2j)^2+3k^2)^2} = \begin{cases}
\dfrac{\pi^2}{8\cosh^2\left(\frac{\pi k\sqrt{3}}{2}\right)} & \text{if $k$ is odd},\\[4ex]
\dfrac{-\pi^2}{8\sinh^2\left(\frac{\pi k\sqrt{3}}{2}\right)} & \text{if $k$ is even}.
\end{cases}
\]
Moreover, we have that
\begin{equation}\label{eq:monotone}
\abs{f(k)} > \abs{f(k+1)} \quad \text{and} \quad \abs{f(k)+f(k+1)} > \abs{f(k+1)+f(k+2)} \quad \text{for all $k \in \N$.}
\end{equation}
\end{lemma}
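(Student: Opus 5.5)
The closed forms come from the second identity of Lemma~\ref{le:sums} after splitting by the parity of $k$, and \eqref{eq:monotone} then follows by comparing hyperbolic functions. I set $a:=\sqrt3 k$, so the summand of $f(k)$ is $\frac{m^2-a^2}{(m^2+a^2)^2}$ with $m=k-2j$.

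\emph{Odd $k$.} As $j$ runs over $\Z$, $m=k-2j$ runs over all odd integers exactly once. Lemma~\ref{le:sums} with $a=\sqrt3k$ gives
\[
f(k)=\tfrac12\cdot\frac{\pi^2}{4\cosh^2(\pi\sqrt3k/2)}=\frac{\pi^2}{8\cosh^2(\pi\sqrt3 k/2)}.
\]

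\emph{Even $k$.} Here $m=k-2j$ runs over all even integers $2i$, $i\in\Z$. Factoring out $4$ gives
\[
\frac{4i^2-a^2}{(4i^2+a^2)^2}=\frac14\cdot\frac{i^2-(a/2)^2}{(i^2+(a/2)^2)^2}.
\]
The first identity of Lemma~\ref{le:sums} with parameter $a/2$ then yields
\[
f(k)=\tfrac12\cdot\tfrac14\cdot\Bigl(-\frac{\pi^2}{\sinh^2(\pi a/2)}\Bigr)=-\frac{\pi^2}{8\sinh^2(\pi\sqrt3k/2)}.
\]
Both sums converge absolutely, so the reindexing is legitimate.

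\emph{Monotonicity.} Put $x:=\pi\sqrt3/2\approx 2.72$ and $g(k):=8|f(k)|/\pi^2$. Then $g(k)$ equals $\cosh^{-2}(xk)$ for $k$ odd and $\sinh^{-2}(xk)$ for $k$ even.
\begin{itemize}
\item For $k$ odd, the claim $|f(k)|>|f(k+1)|$ reads $\sinh(x(k+1))>\cosh(xk)$. This holds because $\sinh(x(k+1))\ge \sinh(xk)\cosh x>\tfrac12 e^{xk}\cdot 7>\cosh(xk)$, using $\cosh x>7$ for $x\approx 2.72$.
\item For $k$ even, $\cosh(x(k+1))>\sinh(xk)$ is immediate.
\end{itemize}

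For the second inequality of \eqref{eq:monotone}, the signs alternate: $f>0$ at odd $k$ and $f<0$ at even $k$. Hence $|f(k)+f(k+1)|=\bigl||f(k)|-|f(k+1)|\bigr|=|f(k)|-|f(k+1)|$ by the first part. The claim becomes
\[
|f(k)|-|f(k+1)|>|f(k+1)|-|f(k+2)|,
\]
which follows from $|f(k)|>2|f(k+1)|$. This in turn follows from the exponential decay by the factor of roughly $e^{-2x}\approx 4\cdot10^{-3}$: one needs $\sinh^2(x(k+1))>2\cosh^2(xk)$ (respectively $\cosh^2(x(k+1))>2\sinh^2(xk)$), and $\cosh x>\sqrt2+1$ suffices.

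The main obstacle is ensuring these elementary hyperbolic comparisons hold uniformly down to $k=1$. The smallest case is $k=1$, which I will check explicitly: $\sinh(2x)\approx 115$ against $\sqrt2\cosh x\approx 10.7$. For larger $k$ the comparison only improves, since $\sinh(x(k+1))/\cosh(xk)$ is increasing in $k$.
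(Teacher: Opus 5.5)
Your proof is correct. The closed forms are obtained exactly as in the paper (split by parity, then apply Lemma~\ref{le:sums}). Your argument for $|f(k)|>|f(k+1)|$ is the same addition-formula comparison the paper uses. The real difference is in the second inequality of \eqref{eq:monotone}.

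\textbf{Comparison with the paper's route.} Set $x=\pi\sqrt3/2$, $h_1(y)=\cosh^{-2}(y)-\sinh^{-2}(y+x)$ and $h_2(y)=\sinh^{-2}(y)-\cosh^{-2}(y+x)$. The paper asserts that $h_1$ and $h_2$ are decreasing. For odd $k$, however, what is needed is $h_1(xk)>h_2(x(k+1))$, a comparison between two different functions. Since $h_2>h_1$ pointwise, this does not follow from the monotonicity of each. (For even $k$ it does: $h_2(xk)>h_2(x(k+1))>h_1(x(k+1))$.)

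Your reduction avoids this issue. Sign alternation plus the ratio bound $|f(k)|>2|f(k+1)|$ gives $|f(k)|-|f(k+1)|>|f(k+1)|>|f(k+1)|-|f(k+2)|$. It is self-contained and needs no monotonicity of auxiliary functions.

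\textbf{A simpler verification.} By the addition formulas, $\sinh(x(k+1))\ge \sinh x\,\cosh(xk)$ and $\cosh(x(k+1))\ge e^{x}\sinh(xk)$. Hence the single fact $\sinh x>\sqrt2$ settles both parities for all $k$ at once, and the separate $k=1$ check is unnecessary.

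\textbf{Two small slips.}
\begin{enumerate}
\item In your first bullet, $\sinh(xk)\cosh x>\tfrac72 e^{xk}$ does not follow from $\cosh x>7$ alone, because $\sinh(xk)<\tfrac12 e^{xk}$. It is true numerically, since $\cosh x\approx 7.63$ and $1-e^{-2x}>0.99$. The clean argument is $\sinh(x(k+1))\ge\sinh x\,\cosh(xk)>\cosh(xk)$.
\item Your remark that the ratio is increasing in $k$ covers only odd $k$. For even $k$ the relevant ratio $\cosh(x(k+1))/\sinh(xk)=\coth(xk)\cosh x+\sinh x$ decreases in $k$, though it stays above $e^{x}$, so the conclusion is unaffected.
\end{enumerate}
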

\begin{proof}
The formula for $f(k)$ follows directly from Lemma~\ref{le:sums}. Next, the first property of \eqref{eq:monotone} follows from the identities
\[
\cosh(\pi (s+1)\sqrt{3}/2) = \cosh(\pi s\sqrt{3}/2)\cosh(\pi\sqrt{3}/2)+\sinh(\pi s\sqrt{3}/2)\sinh(\pi\sqrt{3}/2)>\sinh(\pi s\sqrt{3}/2)
\]
and
\[
\sinh(\pi (s+1)\sqrt{3}/2) = \sinh(\pi s\sqrt{3}/2)\cosh(\pi\sqrt{3}/2)+\cosh(\pi s\sqrt{3}/2)\sinh(\pi\sqrt{3}/2)>\cosh(\pi s\sqrt{3}/2),
\]
where we used that $\sinh(\pi\sqrt{3}/2)>1$. The second property follow from the fact that
\[
x \mapsto \frac{1}{\cosh^2(x)}-\frac{1}{\sinh^2(x+\pi\sqrt{3}/2)} \quad \text{and} \quad x\mapsto \frac{1}{\sinh^2(x)}-\frac{1}{\cosh^2(x+\pi\sqrt{3}/2)}
\]
are decreasing on $[1,\infty)$.
\end{proof}
\bibliographystyle{abbrv}
\bibliography{bibliography}
\end{document}